\newcommand{\CC}{\mathbb{C}}
\newcommand{\RR}{\mathbb{R}}
\newcommand{\cP}{\mathcal{P}}
\newcommand{\dd}{\,\mathrm{d}}
\newtheoremstyle{plain}
{\topsep}
{1.5\topsep}
{\itshape}
{0pt}
{\bfseries}
{.}
{5pt plus 1pt minus 1pt}
{}
\newtheoremstyle{definition}
{\topsep}
{1.5\topsep}
{\normalfont}
{0pt}
{\bfseries}
{.}
{5pt plus 1pt minus 1pt}
{}
\newtheoremstyle{remark}
{0.5\topsep}
{0.8\topsep}
{\normalfont}
{0pt}
{\itshape}
{.}
{5pt plus 1pt minus 1pt}
{}
\theoremstyle{plain} \newtheorem{lemma}{Lemma}
\theoremstyle{plain} \newtheorem{thm}[lemma]{Theorem}
\theoremstyle{plain} \newtheorem{prop}[lemma]{Proposition}
\theoremstyle{plain} \newtheorem{lem}[lemma]{Lemma}
\theoremstyle{definition} \newtheorem{definition}[lemma]{Definition}
\theoremstyle{remark} 
\theoremstyle{plain} \newtheorem{cor}[lemma]{Corollary}
\newcommand{\tRe}{\mathrm{Re}\,}
\newcommand{\tIm}{\mathrm{Im}\,}
\begin{document}
\allowdisplaybreaks
\title{Strict condition for the $L^{2}$-wellposedness of fifth and sixth order dispersive equations}
\date{}
\author{Taehun Kim}
\maketitle
\pagestyle{plain}

\begin{abstract}
We provide a set of conditions that is necessary and sufficient for the $L^{2}$-wellposedness of the Cauchy problem for fifth and sixth order variable-coefficient linear dispersive equations. The necessity of these conditions had been presented by Tarama, and we scrutinized their proof to split the conditions into several parts so that an inductive argument is applicable. This inductive argument simplifies the engineering process of the appropriate pseudodifferential operator needed for the proof of $L^{2}$-wellposedness.
\end{abstract}

\section{Introduction}\label{sec1}

\subsection{Variable-coefficient linear dispersive equations and known results}\label{subsec1.1}
In this paper, we are concerned with the $L^{2}$-wellposedness of the Cauchy problem for variable-coefficient linear dispersive equations in $1$-dimension: \begin{eqnarray}\label{20241006eq1}
\begin{cases}
L_{k}u(x, t) = 0 & (x, t) \in \RR^{2} \\
u(x, t=0)=u_{0}(x) & x \in \RR
\end{cases},
\end{eqnarray} where $k \ge 2$ is an integer and $L_{k}$ is a variable-coefficient linear dispersive operator of the form \begin{eqnarray}\label{20241006eq2}
L_{k}u=D_{t}u-D_{x}^{k}u - \sum_{j=0}^{k-1} b_{j}(x)D_{x}^{j}(u).
\end{eqnarray} Here, $D_{t}$ and $D_{x}$ denote $\frac{1}{i}\partial_{t}$ and $\frac{1}{i}\partial_{x}$ respectively, and $b_{j}(0 \le j \le k-1)$ are given $\overline{C}^{\infty}(\RR)$ functions, complex-valued smooth functions whose partial derivatives of every order are bounded. Note that when $k=2$, $iL_{2}=\partial_{t}+i\partial_{xx}-b_{1}(x)\partial_{x}-ib_{0}(x)$ is a Schr\"odinger-type operator, and when $k=3$, $iL_{3}=\partial_{t}+\partial_{xxx}+ib_{2}(x)\partial_{xx}-b_{1}(x)\partial_{x}-ib_{0}(x)$ is a KdV-type operator.

The $L^{2}$-wellposedness of the Cauchy problem \eqref{20241006eq1} for small integers $k$ are studied in various papers. For $k=2$, Takeuchi \cite[\S 1]{Takeuchi74} proved that the following condition on $b_{1}$ is sufficient for the $L^{2}$-wellposedness of \eqref{20241006eq1}: \[
\left | \int_{x}^{y} \tIm \big (b_{1}(\widetilde{x})\big)\dd\widetilde{x} \right  | \lesssim 1.
\] (Here, and throughout this article, $X \lesssim Y$ means that $X \le CY$ holds for some absolute constant $C>0$. If $C$ depends on some parameters $S$, then we write $X \lesssim_{S}Y$.) In \cite[Lecture VII]{MizohataOtCP}, Mizohata proved that this condition is also necessary, and proved similar necessary conditions for higher dimensional case. For $k=3$, the following set of conditions is equivalent to the $L^{2}$-wellposedness of \eqref{20241006eq1}: \[
\begin{cases}
\left | \int_{x}^{y} \tIm \big ( b_{2}(\widetilde{x}) \big )\dd \widetilde{x} \right | \lesssim_{L_{3}} 1, \\
\left | \int_{x}^{y} \tIm \big ( b_{1}(\widetilde{x})-\frac{1}{3}b_{2}(\widetilde{x})^{2}\big ) \dd \widetilde{x} \right | \lesssim_{L_{3}} |x-y|^{1/2}
\end{cases}
\] This is proved by Tarama \cite{Tarama97} under additional assumption $b_{2} \equiv 0$, and by Mizuhara \cite{Mizuhara06} in full generality. In the same paper, Mizuhara also obtained an equivalent condition for the $L^{2}$-wellposedness of \eqref{20241006eq1} with $k=4$ under some additional assumption, and the same result without this assumption is proved by Tarama \cite{Tarama11}. In \cite{Tarama11}, Tarama generalized the arguments to obtain a set of necessary conditions for the $L^{2}$-wellposedness of \eqref{20241006eq1} with general $k$. One of these necessary conditions is that \begin{eqnarray}\label{20241006eq4}
\left | \int_{x}^{y} \tIm \big ( b_{k-1}(\widetilde{x}) \big ) \dd\widetilde{x} \right | \lesssim_{L_{k}} 1.
\end{eqnarray} Under this condition, $\varphi(x) = \exp \left ( -\frac{i}{k}\int_{0}^{x}b_{k-1}(\widetilde{x})\dd\widetilde{x} \right )$ and $\frac{1}{\varphi}$ are both $\overline{C}^{\infty}$. If $u$ satisfies \eqref{20241006eq1}, the equation satisfied by $v=\varphi^{-1} u$ is \begin{eqnarray}\label{20241006eq3}
\begin{cases}
\left (D_{t}-D_{x}^{k}-\sum_{j=0}^{k-2}\widetilde{b}_{j}D_{x}^{j}\right )v =0& (x, t) \in \RR^{2} \\
v(x, t=0)=\varphi(x)^{-1}u_{0}(x)&  x \in \RR
\end{cases},
\end{eqnarray} where $\widetilde{b}_{j} (0 \le j \le k-2)$ are $\overline{C}^{\infty}(\RR)$ functions determined by $b_{j'} (0 \le j' \le k-2)$. Since $\varphi, \varphi^{-1} \in \overline{C}^{\infty}$, the $L^{2}$-wellposedness of \eqref{20241006eq1} is equivalent to that of \eqref{20241006eq3}, under the condition \eqref{20241006eq4}. Since \eqref{20241006eq4} is necessary for \eqref{20241006eq1} to be $L^{2}$-wellposed, it is enough to study the $L^{2}$-wellposedness of \eqref{20241006eq3}. That is, we may assume without loss of generality that $b_{k-1}\equiv 0$.

The necessary conditions for $L^{2}$-wellposedness of \eqref{20241006eq1} obtained in \cite[Theorem 2.3]{Tarama11} are the following $(k-2)$ conditions \[
\left | \int_{x}^{y} \tIm \big ( d_{j}(\widetilde{x})\big)\dd\widetilde{x} \right | \lesssim_{L_{k}} |x-y|^{\frac{j}{k-1}} \quad (1 \le j \le k-2),
\] where $d_{j}\in \overline{C}^{\infty}(\RR) (1 \le j \le k-2)$ are functions determined by $b_{j'} (0 \le j' \le k-2)$. If the value of $k$ is specified, these $\left \{d_{j}\right \}$ can be explicitly calculated in terms of $\left \{b_{j'}\right \}$, and for $3 \le k \le 6$, these conditions read as
\begin{IEEEeqnarray*}{rCl}
\text{(i)} &\phantom{=}&\left | \int_{x}^{y} \tIm \big (b_{k-2}(\widetilde{x})\big)\dd\widetilde{x} \right  | \lesssim_{L_{k}} |x-y|^{\frac{1}{k-1}}, \\
\text{(ii)} & & \left | \int_{x}^{y} \tIm \big (b_{k-3}(\widetilde{x}) \big ) \dd \widetilde{x} \right | \lesssim_{L_{k}} |x-y|^{\frac{2}{k-1}} \quad \text{ if }k \ge 4, \\
\text{(iii)} & & \left | \int_{x}^{y} \tIm \big (b_{k-4}(\widetilde{x}) - \frac{k-3}{2k}b_{k-2}(\widetilde{x})^{2} \big ) \dd \widetilde{x} \right | \lesssim_{L_{k}} |x-y|^{\frac{3}{k-1}} \quad \text{ if }k \ge 5, \\
\text{(iv)} & & \left | \int_{x}^{y} \tIm \big (b_{k-5}(\widetilde{x}) - \frac{k-4}{k}b_{k-2}(\widetilde{x})b_{k-3}(\widetilde{x}) \big ) \dd \widetilde{x} \right | \lesssim_{L_{k}} |x-y|^{\frac{4}{k-1}} \quad \text{ if }k \ge 6.
\end{IEEEeqnarray*}

If $k=3, 4$ these are exactly the necessary and sufficient conditions for the $L^{2}$-wellposedness of \eqref{20241006eq1} obtained by Tarama (\cite{Tarama97}, \cite{Tarama11}). In this paper, we prove that this set of necessary conditions is also sufficient for the $L^{2}$-wellposedness of \eqref{20241006eq1} when $k$ is equal to $5$ or $6$.

\subsection{Main results}\label{subsec1.2}

The first main result of this paper gives a single necessary condition for the $L^{2}$-wellposedness of \eqref{20241006eq1}.
\begin{thm} \label{20241015thm1}
Let $k \ge 3$ and $1 \le m \le k-2$ be given integers. Let $A$ be a differential operator of order $\le k-2$, which is self-adjoint. Let $b_{j} \in \overline{C}^{\infty}(\RR) (1 \le j \le m)$ be given functions. Then, for the Cauchy problem \[
\begin{cases}
Lu := \left ( D_{t} - D_{x}^{k} - A - \sum_{j=0}^{m} b_{j}(x)D_{x}^{j} \right )u = 0 & (x, t) \in \RR^{2} \\
u(x, t=0)=u_{0}(x) & x \in \RR
\end{cases}
\] to be $L^{2}$-wellposed, it is necessary that \[
\left | \int_{x}^{y} \tIm \big ( b_{m}(\widetilde{x}) \big )\dd\widetilde{x} \right | \lesssim_{L} |x-y|^{\frac{k-1-m}{k-1}}.
\]
\end{thm}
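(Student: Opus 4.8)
The plan is to follow the classical Mizohata-type necessity argument: assume $L^{2}$-wellposedness and test the solution operator against a family of highly oscillatory, spatially concentrated data, extracting from the resulting a priori bound the claimed integral inequality on $\tIm(b_m)$. Concretely, I would construct approximate solutions by a WKB/geometric-optics ansatz. Fix a large parameter $\lambda>0$ and a point around which we wish to localize; seek $u_\lambda(x,t)\approx a(x,t)\,e^{i\lambda\psi(x,t)}$ with $\psi$ real. Plugging into $Lu=0$ and collecting powers of $\lambda$, the top-order term $\lambda^{k}$ forces the eikonal equation $\partial_t\psi=(\partial_x\psi)^{k}$, solved by $\psi(x,t)=x\xi+t\xi^{k}$ for a frequency parameter $\xi$ (which we will also send to infinity, or absorb into $\lambda$). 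The next relevant transport equation governing $\log|a|$ picks up a contribution precisely from $\tIm(b_m)$ at the order corresponding to the $D_x^m$ term: the amplitude grows like $\exp\!\big(c\,\xi^{m}\!\int \tIm(b_m)\big)$ along characteristics, while the lower-order terms $b_j$ with $j<m$, the self-adjoint operator $A$ (self-adjointness kills its imaginary-part contribution to amplitude growth at top order), and the curvature terms contribute only lower-order-in-$\xi$ corrections.

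The key steps, in order, are: (1) reduce to $b_{k-1}\equiv 0$ as already done in the excerpt, and note that adding the self-adjoint $A$ of order $\le k-2$ does not affect the relevant transport coefficient — this is why the theorem isolates a single $b_m$; (2) build the WKB solution to sufficiently high order in $1/\lambda$ so that the error $Lu_\lambda$ is negligible in $L^2$ on a bounded time interval, using the dispersive (real principal symbol) structure to solve the eikonal and transport equations globally in $x$ for bounded $t$; (3) choose the initial amplitude $a(x,0)$ to be a bump concentrated near a chosen base point with width tuned to $\lambda$ (width $\sim \lambda^{-\alpha}$ for a suitable $\alpha\in(0,1)$), and let the characteristic flow spread it; (4) apply the wellposedness estimate $\|u_\lambda(\cdot,t)\|_{L^2}\lesssim_L \|u_\lambda(\cdot,0)\|_{L^2}$, compute both sides from the WKB expansion, and read off that $\xi^{m}\big|\int_x^y \tIm(b_m)\big|$ must be dominated by $\big(\xi\cdot|x-y|\big)\cdot(\text{phase-space volume factors})$; (5) optimize the relation between the frequency $\xi$ and the spatial scale $|x-y|$ — the natural scaling is $\xi\sim|x-y|^{-1/(k-1)}$, which comes from balancing the group-velocity spreading $t\,\xi^{k-1}$ against the spatial width — and substitute to obtain exactly the exponent $\frac{k-1-m}{k-1}$.

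The main obstacle I expect is step (2) together with the bookkeeping in step (4): one must verify that none of the many lower-order terms (the intermediate $b_j$, $j<m$, the operator $A$, and the sub-principal corrections from the amplitude transport of the pure $D_x^k$ part) produce a contribution to $\log|a|$ of the same order in $\xi$ as the $\tIm(b_m)$ term, so that the inequality one extracts is genuinely about $b_m$ alone and not a combination. Keeping $1\le m\le k-2$ and $A$ self-adjoint is exactly what makes this separation clean: self-adjointness ensures $A$ contributes no net amplitude growth at its order, and $m\le k-2$ keeps the $b_m$ term strictly below the $D_x^{k-1}$ level that was already normalized away. A secondary technical point is ensuring the WKB construction is valid on a \emph{fixed} time interval uniformly as $\lambda\to\infty$; since the principal symbol $\xi^k$ is real, the bicharacteristic flow is non-trapping in the relevant sense and the transport equations are solvable by integration along characteristics with bounds depending only on the $\overline{C}^\infty$ norms of the coefficients, so this causes no trouble. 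Finally, a density/approximation argument upgrades the estimate from the WKB quasimode to a statement forced on genuine solutions, and a standard contradiction argument (if the integral inequality failed, choose $x,y$ and $\xi$ making the ratio arbitrarily large) completes the proof.
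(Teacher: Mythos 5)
Your overall strategy (oscillatory wave packets plus the a priori $L^{2}$ bound) is in the same family as the paper's duality-testing argument, but two essential ingredients of the actual proof are missing, and at both points your sketch asserts precisely what has to be proved.

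First, the treatment of the self-adjoint part $A$ is not a matter of "self-adjointness kills its imaginary-part contribution to amplitude growth at top order." The amplitude correction at order $\xi^{-(k-1-m)}$ is not $\int \tIm(b_m)$ but $\int \tIm\big(P(\text{coefficients})\big)$, where $P$ is a polynomial that, besides the linear term $b_m$, contains nonlinear and derivative cross terms built from the coefficients of $A$ (of orders $m+1,\dots,k-2$); these generically have nonzero imaginary part even when $A$ is self-adjoint (compare the corrected quantities such as $d-\tfrac{1}{5}b^{2}$ or the terms $-\tfrac{1}{5}ia'b+\tfrac{1}{5}(a')^{2}$ in the corollaries). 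The paper isolates $b_m$ not by showing these corrections vanish, but by a comparison trick: it proves (Lemma \ref{20241007lem1}(iii)) that $P_{m;m,0}-x_m^{(0)}$ depends only on coefficients of index $\ge m+1$, applies the necessity estimate (Proposition \ref{20241008prop1}) both to $L$ and to the auxiliary operator $L_{0}=D_{t}-D_{x}^{k}-A$ — whose $L^{2}$-wellposedness must itself be established (Appendix \ref{appA}) — and subtracts, so that the two polynomial corrections cancel exactly and only $\tIm(b_m)$ survives. Without this (or an equivalent explicit computation of the polynomial structure), your step (4) bookkeeping cannot conclude an estimate on $b_m$ alone; you would obtain an estimate on $b_m$ plus an uncontrolled combination of $A$'s coefficients.

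Second, a single wave-packet test with the scaling $\xi\sim|x-y|^{-1/(k-1)}$ does not yield the exponent $\tfrac{k-1-m}{k-1}$. To run the argument with spatial concentration at scale $\xi^{m}$ one must know that the correction phases $e^{\tRe\psi}$ (which encode the integrals of $\tIm$ of \emph{all} the intermediate corrected coefficients) vary boundedly on that scale — and that is exactly the estimate being proved. The paper resolves this circularity by a bootstrap: one iteration of the testing argument only gives the exponent $\tfrac{1}{2}$ (more generally $\tfrac{q-1}{m}$ at stage $m$), and these weaker bounds are then fed back to justify a more concentrated ansatz, improving the exponent to $\tfrac{q-1}{m+1}$, until after $k-1$ iterations one reaches $\tfrac{q-1}{k-1}$. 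Relatedly, "build the WKB solution to sufficiently high order so that $Lu_\lambda$ is negligible on a bounded time interval" is not available here: the error is of size $|\xi|^{k-2-m}$ and is never small; the paper instead works on the short time $t\sim|y_0-y_1|\,\xi^{-(k-1)}$ and controls the error term via a maximization-in-time argument, with the constraint $|\xi|\gg|y_0-y_1|^{1/(m+1)}$ emerging from that balance rather than from a one-shot scaling choice. These are not routine technicalities but the core of the proof, so as written the proposal has genuine gaps at both places.
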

This result is not novel in the sense that its proof goes on similarly as in \cite[Theorem 2.3]{Tarama11}. However, Theorem \ref{20241015thm1} does not follow directly from the cited theorem. To prove Theorem \ref{20241015thm1}, we need to analyze the coefficients ($\left \{P_{k-2-m;\ell, j}\right \}$ in Lemma \ref{20241007lem1} or $\{d_{j}\}$ in \cite[Proposition 2.1]{Tarama11}) more carefully, and then use this result for self-adjoint operators to prove that we may disregard self-adjoint part $A$ in Theorem \ref{20241015thm1}. The main advantage in proving this theorem is that it enables inductive argument. Indeed, Theorem \ref{20241015thm1} states that after disregarding the self-adjoint part as zeros, the integral of the coefficient of the highest term must satisfy a certain estimate. Using this estimate, we can switch the $L^{2}$-wellposedness of the original operator into the $L^{2}$-wellposedness of the modified operator with smaller $m$, after which induction is applicable. We will discuss more on this in~Section \ref{subsec1.3}. Moreover, unlike Tarama's proof which strongly uses the contradiction argument, we will obtain quantitative behavior of illposedness by using the so-called `duality testing argument' appearing in the paper of Jeong and Oh \cite[Appendix A]{JeongOh23}.

The second and third main results give a set of necessary and sufficient conditions for \eqref{20241006eq1} to be $L^{2}$-wellposed, where $k=5, 6$. Although these results are already introduced in Section \ref{subsec1.1}, we state them here once again for the sake of completeness.

\begin{thm} \label{20241015thm2}
Let $b, c, d \in \overline{C}^{\infty}(\RR)$ be given functions. Then the Cauchy problem \begin{eqnarray}\label{20241015eq2}
\begin{cases}
Lu := \left ( D_{t}-D_{x}^{5}-b(x)D_{x}^{3}-c(x)D_{x}^{2}-d(x)D_{x} \right )u =0 & (x, t) \in \RR^{2}, \\
u(x, t=0)=u_{0}(x) & x \in \RR
\end{cases}
\end{eqnarray} is $L^{2}$-wellposed if and only if the following three conditions are satisfied:
\begin{IEEEeqnarray*}{rCl}
\text{(i)} &\phantom{=}&\left | \int_{x}^{y} \tIm \big (b(\widetilde{x})\big)\dd\widetilde{x} \right  | \lesssim_{L} |x-y|^{\frac{1}{4}}, \\
\text{(ii)} & & \left | \int_{x}^{y} \tIm \big (c(\widetilde{x}) \big ) \dd \widetilde{x} \right | \lesssim_{L} |x-y|^{\frac{1}{2}}, \\
\text{(iii)} & & \left | \int_{x}^{y} \tIm \big (d(\widetilde{x}) - \frac{1}{5}b(\widetilde{x})^{2} \big ) \dd \widetilde{x} \right | \lesssim_{L} |x-y|^{\frac{3}{4}}.
\end{IEEEeqnarray*}
\end{thm}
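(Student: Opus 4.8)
The plan is to establish both directions simultaneously via one inductive mechanism: peel off the coefficients of $D_x^3$, $D_x^2$, and $D_x$ in turn, each time using Theorem~\ref{20241015thm1} to extract the ``leading'' Mizohata-type estimate and then conjugating $L$ by a pseudodifferential operator that removes that leading non-self-adjoint term modulo a self-adjoint operator of order $\le 3$, operators of strictly lower order, and a bounded remainder.

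For necessity, condition (i) is immediate from Theorem~\ref{20241015thm1} applied with $k=5$, $m=3$, $A=0$, and $(b_3,b_2,b_1,b_0)=(b,c,d,0)$: the exponent it produces is $\frac{k-1-m}{k-1}=\frac14$. Now assume (i) holds (otherwise $L$ is $L^2$-illposed and there is nothing to prove) and set $B(x)=\int_0^x\tIm(b)$, so that (i) is precisely the statement that $B$ is $\tfrac14$-Hölder. I would build a pseudodifferential operator $\Lambda_1$, bounded and boundedly invertible on $L^2$, with symbol a Littlewood--Paley-localized version of $B$ (the localization is what keeps $\Lambda_1$ bounded on $L^2$ despite $B$ being unbounded, and it uses the Hölder bound, not a pointwise bound), such that $\Lambda_1 L\Lambda_1^{-1}=D_t-D_x^5-A_1-c_1(x)D_x^2-d_1(x)D_x+R_1$ with $A_1$ self-adjoint of order $\le 3$, $R_1$ bounded, and $\tIm(c_1)$ differing from $\tIm(c)$ by an estimate-preserving correction. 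Since $\Lambda_1$ is bounded with bounded inverse and $R_1$ is a bounded perturbation, the $L^2$-wellposedness of $L$ forces that of $D_t-D_x^5-A_1-c_1D_x^2-d_1D_x$; Theorem~\ref{20241015thm1} with $m=2$, $A=A_1$ then yields $\big|\int_x^y\tIm(c_1)\big|\lesssim|x-y|^{1/2}$, hence (ii). Iterating with a second conjugator $\Lambda_2$ built from $\int_0^x\tIm(c_1)$ (now using the exponent $\tfrac12$) brings the operator to $D_t-D_x^5-A_2-d_2(x)D_x+R_2$, and Theorem~\ref{20241015thm1} with $m=1$, $A=A_2$ gives $\big|\int_x^y\tIm(d_2)\big|\lesssim|x-y|^{3/4}$; tracking how the coefficient of $D_x$ transforms under $\Lambda_1$ shows $\tIm(d_2)$ coincides with $\tIm\!\big(d-\tfrac15 b^2\big)$ up to an estimate-preserving correction, which is (iii). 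The constant $-\tfrac15=-\tfrac{k-3}{2k}\big|_{k=5}$ is exactly the one dictated by the combinatorial coefficients $\{P_{k-2-m;\ell,j}\}$ of Lemma~\ref{20241007lem1}.

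For sufficiency, assume (i)--(iii). Then (i) and (ii) make $\Lambda_1$ and $\Lambda_2$ well-defined, bounded, and boundedly invertible as above, and (iii) yields a third conjugator $\Lambda_3$ built from $\int_0^x\tIm(d_2)$ using the exponent $\tfrac34$. After the three conjugations, $\Lambda_3\Lambda_2\Lambda_1\,L\,\Lambda_1^{-1}\Lambda_2^{-1}\Lambda_3^{-1}=D_t-D_x^5-A_3-R_3$ with $A_3$ self-adjoint of order $\le 3$ and $R_3$ bounded. Since $A_3$ is $D_x^5$-bounded with relative bound $0$, Kato--Rellich makes $D_x^5+A_3$ self-adjoint, so $e^{it(D_x^5+A_3)}$ is unitary on $L^2$ and $D_t-D_x^5-A_3$ is $L^2$-wellposed; the bounded term $-R_3$ is absorbed by a Duhamel/Gronwall argument. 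Unravelling the bounded invertible conjugations gives the $L^2$-wellposedness of $L$.

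The main obstacle is the construction and bookkeeping of the operators $\Lambda_i$. The naive gauge transformation --- multiplication by $e^{\mp\frac15 B}$ --- fails outright because $B$ is only $\tfrac14$-Hölder and hence unbounded, so $\Lambda_1$ must instead be engineered as a genuine pseudodifferential operator whose frequency-localized symbol is $L^2$-bounded by virtue of the Hölder estimate while still cancelling the order-$3$ non-self-adjoint part modulo order $\le 2$ and a bounded remainder; and one must then check that the three conjugations compose coherently --- each $\Lambda_{i+1}$ is defined from coefficients already perturbed by $\Lambda_i$, so one needs those perturbations to be of strictly lower order and to preserve the relevant Hölder-type estimates with the correct constants. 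It is precisely here that the sharpened Theorem~\ref{20241015thm1}, together with the explicit description of the coefficients in Lemma~\ref{20241007lem1}, is what makes the argument go through: each step becomes a single Mizohata-type weight of the kind already understood for third-order operators, rather than one global pseudodifferential operator to be designed all at once.
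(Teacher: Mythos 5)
Your proposal follows essentially the same route as the paper: necessity by iterating Theorem~\ref{20241015thm1} after absorbing real parts into a self-adjoint operator, and sufficiency by conjugating with frequency-localized Tarama/Mizuhara-type pseudodifferential operators built from $\int^{x}\tIm(\,\cdot\,)$ (the paper's $\Phi_{0,j}$, via Lemma~\ref{20241009lem1}), reducing to the self-adjoint case and transferring wellposedness back through the boundedly invertible conjugations with a Gr\"onwall argument. The only real divergence is the final step, where you invoke Kato--Rellich plus Duhamel for $D_{t}-D_{x}^{5}-A_{3}$ whereas the paper handles the self-adjoint case in Appendix~\ref{appA} by a pseudodifferential symmetrizer energy estimate; both are viable since $A_{3}$ is symmetric modulo $S^{0}$ and of lower order than $D_{x}^{5}$.
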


\begin{thm} \label{20241015thm3}
Let $b, c, d, e \in \overline{C}^{\infty}(\RR)$ be given functions. Then the Cauchy problem \begin{eqnarray}\label{20241015eq13}
\begin{cases}
Lu:=\left ( D_{t}-D_{x}^{6}-b(x)D_{x}^{4}-c(x)D_{x}^{3}-d(x)D_{x}^{2}-e(x)D_{x} \right )u =0 & (x, t) \in \RR^{2}, \\
u(x, t=0)=u_{0}(x) & x \in \RR
\end{cases}
\end{eqnarray} is $L^{2}$-wellposed if and only if the following four conditions are satisfied:
\begin{IEEEeqnarray*}{rCl}
\text{(i)} &\phantom{=}&\left | \int_{x}^{y} \tIm \big (b(\widetilde{x})\big)\dd\widetilde{x} \right  | \lesssim_{L} |x-y|^{\frac{1}{5}}, \\
\text{(ii)} & & \left | \int_{x}^{y} \tIm \big (c(\widetilde{x}) \big ) \dd \widetilde{x} \right | \lesssim_{L} |x-y|^{\frac{2}{5}}, \\
\text{(iii)} & & \left | \int_{x}^{y} \tIm \big (d(\widetilde{x}) - \frac{1}{4}b(\widetilde{x})^{2} \big ) \dd \widetilde{x} \right | \lesssim_{L} |x-y|^{\frac{3}{5}},\\
\text{(iv)} & & \left | \int_{x}^{y} \tIm \big (e(\widetilde{x}) - \frac{1}{3}b(\widetilde{x})c(\widetilde{x}) \big ) \dd \widetilde{x} \right | \lesssim_{L} |x-y|^{\frac{4}{5}}.
\end{IEEEeqnarray*}
\end{thm}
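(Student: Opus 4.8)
The plan is to prove both directions, treating the case $k=6$ in detail; the $k=5$ case of Theorem~\ref{20241015thm2} is handled the same way, with the orders $4,3,2,1$ replaced by $3,2,1$.

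\emph{Necessity.} I would deduce this from Theorem~\ref{20241015thm1} by a four-step descent. After writing $\tRe(b)D_{x}^{4}+\tRe(c)D_{x}^{3}+\tRe(d)D_{x}^{2}+\tRe(e)D_{x}$ as a self-adjoint operator of order $\le 4$ plus lower-order terms, and collecting the latter into the coefficients of $D_{x}^{j}$ with $j\le 3$, the operator $L$ falls under Theorem~\ref{20241015thm1} with $m=4$; since $\frac{6-1-4}{6-1}=\frac15$, this yields (i). Granting (i), $\int_{0}^{x}\tIm b$ grows only like $|x|^{1/5}$, and this sublinearity is exactly what is needed to build a bounded, invertible pseudodifferential operator $K_{1}$ for which $K_{1}LK_{1}^{-1}$ is again of the form in Theorem~\ref{20241015thm1} but with its order-$4$ skew-adjoint part $i\tIm(b)D_{x}^{4}$ removed — the $D_{x}^{4}$-coefficient becoming real modulo lower order, hence absorbable into the self-adjoint part. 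A computation of the lower-order terms generated by the symbol calculus (the same computation that produces the $\{d_{j}\}$ in \cite[Proposition~2.1]{Tarama11}) shows that the resulting order-$3$ coefficient still has imaginary part $\tIm c$, so Theorem~\ref{20241015thm1} with $m=3$ gives (ii). Repeating with $K_{2}$ and $K_{3}$ built from (ii) and (iii), the nested commutators $\tfrac12[\Psi_{i},[\Psi_{i},D_{x}^{6}]]$ and the cross-commutators contribute precisely the quadratic corrections $-\tfrac14 b^{2}$ at order $2$ and $-\tfrac13 bc$ at order $1$; Theorem~\ref{20241015thm1} with $m=2$ and $m=1$ then delivers (iii) and (iv). The only work beyond \cite{Tarama11} is this identification of coefficients.

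\emph{Sufficiency.} Assuming (i)--(iv), I would run the classical symmetrization/energy argument. The goal is to produce a pseudodifferential operator $K$, bounded and invertible on $L^{2}$, with $KLK^{-1}=D_{t}-D_{x}^{6}-A'-R$ where $A'$ is self-adjoint of order $\le 4$ and $R$ has order $\le 0$. Given this, for $v=Ku$ one has $\tRe\langle i(D_{x}^{6}+A')v,v\rangle=0$ because $D_{x}^{6}+A'$ is self-adjoint, so $\frac{d}{dt}\|v\|_{L^{2}}^{2}\lesssim_{L}\|v\|_{L^{2}}^{2}$, whence $\|u(t)\|_{L^{2}}\lesssim_{L}e^{C|t|}\|u_{0}\|_{L^{2}}$ for $t$ of either sign; combined with the same estimate for the formal adjoint and the usual approximation argument, this gives $L^{2}$-wellposedness. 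I would take $K=K_{4}K_{3}K_{2}K_{1}$ built in four steps: $K_{1}=\mathrm{Op}(e^{\psi_{1}})$ removes the order-$4$ skew part $i\tIm(b)D_{x}^{4}$ using condition (i); on the reduced operator, $K_{2}=\mathrm{Op}(e^{\psi_{2}})$ removes the order-$3$ skew part using (ii); then $K_{3}$ the order-$2$ part using (iii); then $K_{4}$ the order-$1$ part using (iv). Each $\psi_{j}$ is a frequency-truncated primitive of the combination appearing in the $j$-th condition — $\tIm(b)$, $\tIm(c)$, $\tIm(d-\tfrac14 b^{2})$, $\tIm(e-\tfrac13 bc)$ — and this is precisely where the inductive structure of Theorem~\ref{20241015thm1} pays off: at the $j$-th step one has reduced to an operator whose top skew coefficient is controlled by, and only by, the $j$-th condition.

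\emph{Main obstacle.} The hard part will be the construction of each $\psi_{j}$. It must simultaneously be a \emph{bounded} symbol, with all symbol-seminorms under control so that $\mathrm{Op}(e^{\psi_{j}})^{\pm1}$ are $L^{2}$-bounded, \emph{and} have an $x$-derivative that, upon commutation with $D_{x}^{6}$, reproduces the unbounded primitive $\int_{0}^{x}\tIm(\cdots)$. These two requirements are reconciled by cutting $\psi_{j}$ off at a frequency-dependent scale, and the H\"older exponents $\tfrac15,\tfrac25,\tfrac35,\tfrac45$ in (i)--(iv) — which equal $\frac{k-1-m}{k-1}$ for $k=6$, $m=4,3,2,1$ — are exactly what make the truncated primitives land in an admissible symbol class. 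One must then verify that every error produced along the way — the subprincipal parts of $[\Psi_{j},D_{x}^{6}]$, the nested and cross commutators, and the contribution of $\psi_{j}$ to the genuinely self-adjoint part — is of order $\le 0$ or is cancelled exactly at the next step, so that the four conditions are consumed one at a time and no skew term of positive order survives. Pushing this symbol calculus cleanly through the four orders $4,3,2,1$, with uniform bounds on all seminorms, is the technical core; once it is done, the energy inequality and the passage to well-posedness are standard.
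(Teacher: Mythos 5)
Your proposal follows essentially the same route as the paper: necessity by iterating Theorem \ref{20241015thm1} through the successively conjugated operators (absorbing real-coefficient terms into self-adjoint parts at each stage), and sufficiency by conjugating with exponential-type pseudodifferential operators built from frequency-truncated primitives of $\tIm(b)$, $\tIm(c_{(1)})$, $\tIm(d_{(2)})$, $\tIm(e_{(3)})$, reducing to the self-adjoint case and closing with an energy estimate. The one point to flag is that the paper's $\Phi_{j}$ are not pure exponentials $\mathrm{Op}(e^{\psi_{j}})$: they carry carefully weighted $\langle\xi\rangle_{\ell}$-factors inside $\psi_{j}$ and explicit lower-order multiplicative correction factors, chosen so that each conjugated operator is again a \emph{differential} operator modulo $S^{0}$ (which is what lets Theorem \ref{20241015thm1} be reapplied and prevents leftover non-differential skew terms of positive order with only $|x-y|^{1/5}$-type control), so the "technical core" you defer includes designing these corrections, not merely verifying symbol seminorms.
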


Recall that we have assumed $b_{k-1}\equiv 0$ by using gauge transformation. Transforming back into the original equation gives the following corollaries:

\begin{cor}
Let $a, b, c, d \in \overline{C}^{\infty}(\RR)$ be given functions. Then the Cauchy problem \[
\begin{cases}
Lu:=\left ( D_{t}-D_{x}^{5}-a(x)D_{x}^{4}-b(x)D_{x}^{3}-c(x)D_{x}^{2}-d(x)D_{x} \right )u =0 & (x, t) \in \RR^{2}, \\
u(x, t=0)=u_{0}(x) & x \in \RR
\end{cases}
\] is $L^{2}$-wellposed if and only if the following four conditions are satisfied:
\begin{IEEEeqnarray*}{rCl}
\text{(i)} &\phantom{=}&\left | \int_{x}^{y} \tIm \big (a(\widetilde{x})\big)\dd\widetilde{x} \right  | \lesssim_{L} 1, \\
\text{(ii)} &\phantom{=}&\left | \int_{x}^{y} \tIm \left (b(\widetilde{x}) - \frac{2}{5}a(\widetilde{x})^{2}\right)\dd\widetilde{x} \right  | \lesssim_{L} |x-y|^{\frac{1}{4}}, \\
\text{(iii)} & & \left | \int_{x}^{y} \tIm \left (c(\widetilde{x}) - \frac{3}{5}a(\widetilde{x})b(\widetilde{x}) + \frac{4}{25}a(\widetilde{x})^{3} \right) \dd \widetilde{x} \right | \lesssim_{L} |x-y|^{\frac{1}{2}}, \\
\text{(iv)} & & \left | \int_{x}^{y} \tIm \left (d(\widetilde{x}) - \frac{1}{5}b(\widetilde{x})^{2}-\frac{2}{5}a(\widetilde{x})c(\widetilde{x})+\frac{7}{25}a(\widetilde{x})^{2}b(\widetilde{x})-\frac{7}{125}a(\widetilde{x})^{4} - \frac{1}{5}ia'(\widetilde{x})b(\widetilde{x}) + \frac{1}{5}a'(\widetilde{x})^{2} \right ) \dd \widetilde{x} \right | \lesssim_{L} |x-y|^{\frac{3}{4}}.
\end{IEEEeqnarray*}
\end{cor}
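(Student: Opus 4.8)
\noindent\textit{Proof proposal.} Condition (i) is exactly the condition \eqref{20241006eq4} of Tarama for $k=5$ (with $b_{k-1}=a$), so its necessity is already known; in the converse direction I will simply assume it. The plan is to kill the $D_{x}^{4}$–term by the gauge transformation described in Section~\ref{subsec1.1} and then invoke Theorem~\ref{20241015thm2}. Under (i) the functions $\varphi(x)=\exp\!\big(-\tfrac{i}{5}\int_{0}^{x}a(\widetilde{x})\dd\widetilde{x}\big)$ and $1/\varphi$ both lie in $\overline{C}^{\infty}(\RR)$, so the Cauchy problem for $Lu=0$ is $L^{2}$–wellposed if and only if the one for $v=\varphi^{-1}u$ is, and $v$ solves an equation of the form $\big(D_{t}-D_{x}^{5}-\widetilde{b}(x)D_{x}^{3}-\widetilde{c}(x)D_{x}^{2}-\widetilde{d}(x)D_{x}-\widetilde{e}(x)\big)v=0$ with $\widetilde{b},\widetilde{c},\widetilde{d},\widetilde{e}\in\overline{C}^{\infty}(\RR)$ computable from $a,b,c,d$. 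Since $\widetilde{e}(x)$ is a bounded multiplication operator, the zeroth–order term is a bounded perturbation of the generator, and a standard Duhamel/Gronwall argument shows that dropping it does not affect $L^{2}$–wellposedness; hence Theorem~\ref{20241015thm2} applies to the remaining operator and says that the $v$–equation is $L^{2}$–wellposed if and only if
\[
\Big|\int_{x}^{y}\tIm\widetilde{b}\Big|\lesssim_{L}|x-y|^{1/4},\qquad \Big|\int_{x}^{y}\tIm\widetilde{c}\Big|\lesssim_{L}|x-y|^{1/2},\qquad \Big|\int_{x}^{y}\tIm\big(\widetilde{d}-\tfrac{1}{5}\widetilde{b}^{2}\big)\Big|\lesssim_{L}|x-y|^{3/4}.
\]

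\noindent Next I would compute $\widetilde{b},\widetilde{c},\widetilde{d}$ explicitly. Writing $g_{\ell}=\varphi^{-1}D_{x}^{\ell}\varphi$, the Leibniz rule gives $\varphi^{-1}D_{x}^{J}(\varphi v)=\sum_{\ell}\binom{J}{\ell}g_{\ell}D_{x}^{J-\ell}v$, and the $g_{\ell}$ satisfy $g_{0}=1$, $g_{1}=-a/5$, and $g_{\ell+1}=(D_{x}+g_{1})g_{\ell}$, so they can be read off one after another. Matching the coefficients of $D_{x}^{3},D_{x}^{2},D_{x}$ in $\varphi^{-1}L(\varphi v)$ then yields
\[
\widetilde{b}=b-\tfrac{2}{5}a^{2}+2ia',\qquad \widetilde{c}=c-\tfrac{3}{5}ab+\tfrac{4}{25}a^{3}+2a'',
\]
together with a (longer) expression for $\widetilde{d}$, from which a short computation gives
\[
\widetilde{d}-\tfrac{1}{5}\widetilde{b}^{2}=\Big(d-\tfrac{1}{5}b^{2}-\tfrac{2}{5}ac+\tfrac{7}{25}a^{2}b-\tfrac{7}{125}a^{4}-\tfrac{1}{5}ia'b+\tfrac{1}{5}(a')^{2}\Big)-ia'''+\tfrac{2}{25}ia^{2}a'.
\]

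\noindent Finally I would match these against (ii)--(iv) via the elementary observation that if $f\in\overline{C}^{\infty}(\RR)$ then $\big|\int_{x}^{y}\tIm f'(\widetilde{x})\dd\widetilde{x}\big|=|\tIm(f(y)-f(x))|\lesssim_{f}\min(|x-y|,1)\le|x-y|^{\theta}$ for every $\theta\in[0,1]$, and likewise with $\tRe$ in place of $\tIm$ or with $f'$ replaced by any exact derivative of an $\overline{C}^{\infty}$ function (e.g.\ $a^{2}a'=\tfrac13(a^{3})'$). Thus the term $2ia'$ in $\widetilde{b}$, the term $2a''$ in $\widetilde{c}$, and the terms $-ia'''$ and $\tfrac{2}{25}ia^{2}a'$ in $\widetilde{d}-\tfrac15\widetilde{b}^{2}$ each contribute a quantity absorbable—in both directions—into $|x-y|^{1/4}$, $|x-y|^{1/2}$, $|x-y|^{3/4}$ respectively, so the three displayed conditions for the $v$–equation are equivalent to (ii), (iii), (iv). Together with the fact that (i) is necessary and is precisely what licenses the gauge transformation, this gives the claimed equivalence.

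\noindent The only genuine work is the coefficient bookkeeping: producing $g_{2},g_{3},g_{4}$, extracting $\widetilde{b},\widetilde{c},\widetilde{d}$, and verifying that the many terms in $\widetilde{d}-\tfrac15\widetilde{b}^{2}$ collapse to exactly the integrand in (iv) plus $i$ times exact derivatives of $\overline{C}^{\infty}$ functions. Everything else is routine—the wellposedness equivalence under the gauge transformation is already used in Section~\ref{subsec1.1}, the harmlessness of a bounded zeroth–order term is standard, and the absorption step is the elementary fact above.
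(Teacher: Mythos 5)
Your proposal is correct and follows essentially the same route the paper intends for this corollary: gauge away the $D_{x}^{4}$ term using condition (i), apply Theorem \ref{20241015thm2} to the transformed operator, and absorb the exact-derivative remainders ($2ia'$ in $\widetilde b$, $2a''$ in $\widetilde c$, and $-ia'''+\tfrac{2}{25}ia^{2}a'$ in $\widetilde d-\tfrac{1}{5}\widetilde b^{2}$) via $\bigl|\int_{x}^{y}p'\bigr|\lesssim\min(|x-y|,1)$; your stated formulas for $\widetilde b$, $\widetilde c$, and $\widetilde d-\tfrac{1}{5}\widetilde b^{2}$ are exactly what the gauge computation gives. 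The only step beyond the paper's implicit argument is your (correct and standard) remark that the zeroth-order coefficient created by the transformation is a bounded multiplication operator and hence does not affect $L^{2}$-wellposedness, which is consistent with how the paper handles $S^{0}$ errors elsewhere.
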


\begin{cor}
Let $a, b, c, d, e \in \overline{C}^{\infty}(\RR)$ be given functions. Then the Cauchy problem \[
\begin{cases}
Lu:=\left ( D_{t}-D_{x}^{6}-a(x)D_{x}^{5}-b(x)D_{x}^{4}-c(x)D_{x}^{3}-d(x)D_{x}^{2}-e(x)D_{x} \right )u =0 & (x, t) \in \RR^{2}, \\
u(x, t=0)=u_{0}(x) & x \in \RR
\end{cases}
\] is $L^{2}$-wellposed if and only if the following five conditions are satisfied:
\begin{IEEEeqnarray*}{rCl}
\text{(i)} &\phantom{=}&\left | \int_{x}^{y} \tIm \big (a(\widetilde{x})\big)\dd\widetilde{x} \right  | \lesssim_{L} 1, \\
\text{(ii)} &\phantom{=}&\left | \int_{x}^{y} \tIm \left (b(\widetilde{x})-\frac{5}{12}a(\widetilde{x})^{2} \right)\dd\widetilde{x} \right  | \lesssim_{L} |x-y|^{\frac{1}{5}}, \\
\text{(iii)} & & \left | \int_{x}^{y} \tIm \left (+c(\widetilde{x})-\frac{2}{3}a(\widetilde{x})b(\widetilde{x})+\frac{5}{27}a(\widetilde{x})^{3} \right) \dd \widetilde{x} \right | \lesssim_{L} |x-y|^{\frac{2}{5}}, \\
\text{(iv)} & & \left | \int_{x}^{y} \tIm \left (d(\widetilde{x}) - \frac{1}{4}b(\widetilde{x})^{2} - \frac{1}{2}a(\widetilde{x})c(\widetilde{x}) + \frac{3}{8}a(\widetilde{x})^{2}b(\widetilde{x}) - \frac{5}{64}a(\widetilde{x})^{4} - \frac{1}{4}ia'(\widetilde{x})b(\widetilde{x})+\frac{5}{16}a'(\widetilde{x})^{2} \right ) \dd \widetilde{x} \right | \lesssim_{L} |x-y|^{\frac{3}{5}}, \\
\text{(v)} & & \bigg | \int_{x}^{y} \tIm \bigg (e(\widetilde{x}) -\frac{1}{3}b(\widetilde{x})c(\widetilde{x}) - \frac{1}{3}a(\widetilde{x})d(\widetilde{x}) + \frac{2}{9}a(\widetilde{x})b(\widetilde{x})^{2}+\frac{2}{9}a(\widetilde{x})^{2}c(\widetilde{x})-\frac{14}{81}a(\widetilde{x})^{3}b(\widetilde{x})+\frac{7}{243}a(\widetilde{x})^{5} \\
&& \qquad \qquad-\frac{4}{9}a''(\widetilde{x})b(\widetilde{x}) - \frac{1}{3}ia'(\widetilde{x})c(\widetilde{x}) + \frac{2}{9}ia(\widetilde{x})a'(\widetilde{x})b(\widetilde{x})-\frac{10}{27}a(\widetilde{x})a'(\widetilde{x})^{2} \bigg ) \dd \widetilde{x} \bigg | \lesssim_{L} |x-y|^{\frac{4}{5}}
\end{IEEEeqnarray*}
\end{cor}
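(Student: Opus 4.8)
The plan is to deduce both Corollaries from Theorems~\ref{20241015thm2} and~\ref{20241015thm3} by undoing the gauge transformation of Section~\ref{subsec1.1}; I treat the sixth-order Corollary, the fifth-order one being verbatim the same with $6$ replaced by $5$ and Theorem~\ref{20241015thm3} replaced by Theorem~\ref{20241015thm2}. First I would observe that (i) is necessary: the operator of the Corollary is \eqref{20241006eq2} for $k=6$ with $b_5=a,\ b_4=b,\dots,b_1=e,\ b_0\equiv0$, so the estimate $\big|\int_x^y\tIm\big(a(\widetilde x)\big)\dd\widetilde x\big|\lesssim_L 1$ is exactly \eqref{20241006eq4}. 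Assume henceforth that (i) holds. Then, as recorded right after \eqref{20241006eq4}, $\varphi(x)=\exp\!\big(-\tfrac i6\int_0^x a(\widetilde x)\dd\widetilde x\big)$ and $\varphi^{-1}$ lie in $\overline C^\infty(\RR)$; moreover $|\varphi(x)|=\exp\!\big(\tfrac16\int_0^x\tIm a\big)$ is bounded above and below by (i), so multiplication by $\varphi^{\pm1}$ is an isomorphism of $L^2(\RR)$, and the Cauchy problem of the Corollary for $u$ is $L^2$-wellposed if and only if the one for $v=\varphi^{-1}u$ and $\widetilde L:=\varphi^{-1}\circ L\circ\varphi$ is.

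Conjugation by $\varphi$ sends $D_x$ to $D_x+\alpha$, where $\alpha:=\varphi^{-1}D_x\varphi=-\tfrac a6\in\overline C^\infty(\RR)$, so $\varphi^{-1}\circ D_x^{j}\circ\varphi=(D_x+\alpha)^j=\sum_{\ell=0}^{j}\binom j\ell B_\ell D_x^{j-\ell}$, with $B_\ell=B_\ell\big(\alpha,D_x\alpha,\dots,D_x^{\ell-1}\alpha\big)$ the complete Bell polynomial ($B_0=1$, $B_1=\alpha$, $B_2=\alpha^2+D_x\alpha$, and so on). Plugging this into $\widetilde L=D_t-(D_x+\alpha)^6-a(D_x+\alpha)^5-b(D_x+\alpha)^4-c(D_x+\alpha)^3-d(D_x+\alpha)^2-e(D_x+\alpha)$ and sorting by powers of $D_x$, one finds that the coefficient of $D_x^5$ is $-6\alpha-a=0$ (this is why $\varphi$ was chosen), the coefficient of $D_x^0$ is some function in $\overline C^\infty(\RR)$ and hence a bounded multiplication operator that may be deleted without affecting $L^2$-wellposedness, and the coefficients of $D_x^4,D_x^3,D_x^2,D_x^1$ are explicit differential polynomials $\widetilde b,\widetilde c,\widetilde d,\widetilde e\in\overline C^\infty(\RR)$ in $a,b,c,d,e$ and finitely many of their derivatives. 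Thus $\widetilde L$ is exactly of the form to which Theorem~\ref{20241015thm3} (with, if one insists, a bounded $\overline C^\infty$ zeroth-order term adjoined) applies.

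By Theorem~\ref{20241015thm3}, the transformed problem --- hence the original one --- is $L^2$-wellposed if and only if
\begin{gather*}
\Big|\int_x^y\tIm\widetilde b\,\dd\widetilde x\Big|\lesssim_L|x-y|^{1/5},\qquad
\Big|\int_x^y\tIm\widetilde c\,\dd\widetilde x\Big|\lesssim_L|x-y|^{2/5},\\
\Big|\int_x^y\tIm\big(\widetilde d-\tfrac14\widetilde b^{2}\big)\dd\widetilde x\Big|\lesssim_L|x-y|^{3/5},\qquad
\Big|\int_x^y\tIm\big(\widetilde e-\tfrac13\widetilde b\widetilde c\big)\dd\widetilde x\Big|\lesssim_L|x-y|^{4/5}.
\end{gather*}
Then I would substitute the formulas for $\widetilde b,\widetilde c,\widetilde d,\widetilde e$ and simplify; the only tool needed is the elementary fact that for $g\in\overline C^\infty(\RR)$
\[
\Big|\int_x^y\tIm\big((\partial_x g)(\widetilde x)\big)\dd\widetilde x\Big|=|\tIm g(y)-\tIm g(x)|\lesssim_L\min\{|x-y|,1\}\le|x-y|^{\theta}
\]
for every $\theta\in(0,1]$. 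Consequently any summand of $\tIm\widetilde b,\dots,\tIm\big(\widetilde e-\tfrac13\widetilde b\widetilde c\big)$ that is the imaginary part of an exact $\partial_x$-derivative of an $\overline C^\infty$ function is negligible for the H\"older bound in question and may be dropped. With $\alpha=-a/6$ a large share of the terms produced by $B_2,\dots,B_6$ are of this type, and after dropping them and collecting the rest the four conditions above become precisely (ii), (iii), (iv), (v). For example a one-line computation gives $\widetilde b=b-\tfrac5{12}a^2+\tfrac{5i}{2}a'$ with $\tfrac{5i}{2}a'=\partial_x\!\big(\tfrac{5i}{2}a\big)$ negligible, so the first condition is (ii); and $\widetilde c=c-\tfrac23ab+\tfrac{5}{27}a^3+\tfrac{10}{3}a''$ with $\tfrac{10}{3}a''=\partial_x\!\big(\tfrac{10}{3}a'\big)$ negligible, so the second is (iii); (iv) and (v) follow the same way.

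The main obstacle is this last step for condition (v): $\widetilde e$ is a degree-five differential polynomial in $a$ together with contributions of $b,c,d,e$ and of derivatives of $a,b,c$, and one must check that after subtracting $\tfrac13\widetilde b\widetilde c$ and deleting all negligible terms exactly the combination displayed in (v) survives, with its rational coefficients. The delicate point is that negligibility need not be visible summand by summand: one has to recognise which sums of individually non-negligible terms are exact $\partial_x$-derivatives of $\overline C^\infty$ functions --- for instance $a''b+a'b'=\partial_x(a'b)$, so $\tIm(a''b+a'b')$ is negligible although $\tIm(a''b)$ by itself is not --- and organise the cancellations accordingly; this is a finite (if somewhat long) algebraic verification needing no new idea. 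A convenient internal check is the case $a\equiv0$, where $\alpha\equiv0$, $(\widetilde b,\widetilde c,\widetilde d,\widetilde e)=(b,c,d,e)$, and (ii)--(v) collapse to the four conditions of Theorem~\ref{20241015thm3}.
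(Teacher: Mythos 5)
Your proposal follows exactly the route the paper intends (and leaves implicit): condition (i) is the necessary condition \eqref{20241006eq4}, the gauge transform $\varphi=\exp\big(-\tfrac{i}{6}\int_{0}^{x}a\big)$ reduces the operator to the form covered by Theorem \ref{20241015thm3} (the harmless zeroth-order term being absorbed as you say), and conditions (ii)--(v) are the four conditions of that theorem rewritten modulo imaginary parts of exact $\partial_{x}$-derivatives; your sample computations $\widetilde b=b-\tfrac{5}{12}a^{2}+\tfrac{5i}{2}a'$ and $\widetilde c=c-\tfrac{2}{3}ab+\tfrac{5}{27}a^{3}+\tfrac{10}{3}a''$ check out. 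The only piece you defer, the explicit algebra behind (iv) and (v), is likewise omitted by the paper, so nothing essential is missing.
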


\subsubsection{Classes of pseudodifferential operators}

In the heuristic reasoning in Section \ref{subsec1.3} and proofs in Section \ref{sec3} and \ref{sec4}, we will use pseudodifferential operators. We collect here the definitions of the classes of pseudodifferential operators which will be used later.

\begin{definition}
Let $m \in \RR$. 
\begin{itemize}
\item[(a)] A pseudodifferential operator $B(x, D)=\mathrm{Op}(b(x, \xi))$ of order $m$ is an operator defined by \[
B(x, D)f(x) = \int_{\RR} e^{ix\xi} b(x, \xi)\hat{f}(\xi)\dd \xi,
\] where $b \in C^{\infty}(\RR)$ is a function satisfying \[
\left | \partial_{x}^{\alpha}\partial_{\xi}^{\beta}b(x, \xi)\right | \lesssim_{\alpha, \beta} \langle \xi \rangle^{m-\beta} \quad \forall \; (x, \xi) \in \RR^{2}
\] for any nonnegative integers $\alpha$ and $\beta$. Here, $\langle \xi \rangle := \sqrt{1+|\xi|^{2}}$. If this is the case, we say that $B$ is in the class $S^{m}$. We will abuse notation and identify pseudodifferential operator $B$ with its symbol $b$.
\item[(b)] Let $B(l;x, D)$ be a pseudodifferential operator with parameter $l$, defined by \[
B(l;x, D)f(x) = \int_{\RR}e^{ix\xi}b(l;x, \xi)\hat{f}(\xi)\dd\xi.
\] $B(l;x, D)$ is said to be in class $S_{(\ell)}^{m}$ iff for any nonnegative integers $\alpha$ and $\beta$, one has \[
\left | \partial_{x}^{\alpha}\partial_{\xi}^{\beta}b(l;x, \xi) \right | \lesssim_{\alpha, \beta} \langle \xi \rangle_{l}^{m-\beta} \quad \forall \; l \ge 1, \; \forall \; (x, \xi) \in \RR^{2},
\] where $\langle \xi \rangle_{l} := \sqrt{\xi^{2}+l^{2}}$. We will abuse notation by identifying $B$ and $b$.
\item[(c)] Let $A, B$ be pseudodifferential operators. Then the expression $A \equiv B \pmod{S^{m}}$ means that $A-B \in S^{m}$. Similarly, the expression $A \equiv B \pmod{S_{(\ell)}^{m}}$ means that $A-B \in S_{(\ell)}^{m}$.
\end{itemize}
\end{definition}

Several properties of these pseudodifferential operators are studied extensively in \cite{HormanderLPDO3} and \cite{Kumano-goPDO}. In this paper, an operator $A$ is said to be self-adjoint iff $A \equiv A^{\ast} \pmod{S^{0}}$.

\subsection{Playing with toy case $k=4$} \label{subsec1.3}

In this section, we will briefly explain the scheme for the proof of Theorem \ref{20241015thm2} and \ref{20241015thm3}. This abstract scheme will be exemplified by a heuristic proof in case $k=4$. The expositions below are organized to emphasize the parallel reasoning between the way how necessary condition pops up and the way it serves as a sufficient condition.

We started with a differential operator \[
L=D_{t}-D_{x}^{k}-\sum_{j=0}^{k-1}b_{j}(x)D_{x}^{j}
\] and used gauge transformation to get rid of the $(k-1)$th term. Our next target is to remove the $(k-2)$th term. This goal is not achievable by using simple gauge transformations. Instead, we make a change of variable $u=v+B_{-1}v$ where $B_{-1}$ is a pseudodifferential operator ($\Psi$DO) of order $-1$. Assuming that $u$ is a solution to $Lu=0$, the equation satisfied by $v$ is \[
(I+B_{-1})^{-1} \circ L \circ (I+B_{-1}) v = 0.
\] Expand $L \circ (I+B_{-1})$ by using formal symbolic calculus to get \[
L \circ (I+B_{-1}) \equiv D_{t}+B_{-1}D_{t} - D_{x}^{k}-B_{-1}D_{x}^{k} + ik\mathrm{Op}(\xi^{k-1}\partial_{x}B_{-1}) - b_{k-2}(x)D_{x}^{k-2} \pmod{S^{k-3}} 
\] Hence, if \[
ik\mathrm{Op}(\xi^{k-1}\partial_{x}B_{-1}) \equiv \mathrm{Op}(b_{k-2}(x)\xi^{k-2}) \pmod{S^{k-3}}, \quad \text{i.e., } \quad \partial_{x}B_{-1} \equiv -\frac{i}{k}b_{k-2}(x)D_{x}^{-1} \pmod{S^{-2}},
\] then we would have \[
(I+B_{-1})^{-1} \circ L \circ (I+B_{-1}) \equiv D_{t} - D_{x}^{k} \pmod{S^{k-3}},
\] so that the process to eliminate the $(k-2)$th term of $L$ is successful at least in our dream. Hence our objective turns down into the construction of a $\Psi$DO $B_{-1} \in S^{-1}$ such that the symbol of $\partial_{x}B_{-1}$ is approximately equal to $-\frac{i}{k}\frac{1}{\xi}b_{k-2}(x)$. Of course this is impossible without any further assumption on $b_{k-2}$. This is where the necessary condition obtained in Theorem \ref{20241015thm1} comes into play. The necessary condition is that \[
\left |\int_{x}^{y}\tIm \big (b_{k-2}(\widetilde{x})\big )\dd\widetilde{x}\right | \lesssim_{L} |x-y|^{\frac{1}{k-1}}.
\] If we assume this, then in \cite[Lemma 2.1]{Mizuhara06} it is proved that the following Tarama-type operator \[
B=\frac{1}{k\xi}\int_{-\infty}^{x}\chi \left ( \frac{y-x}{\xi^{k-1}} \right )\tIm \big (b_{k-2}(y)\big )\dd y
\] satisfies $B \in S^{0}$ and $\partial_{x}B \equiv \frac{1}{k\xi}\tIm \big (b_{k-2}(x)\big ) \pmod{S^{-(k-1)}}$, where $\chi \in C_{c}^{\infty}(-2, 2)$ is any given function satisfying $\chi(x) \equiv 1 \; \forall \;|x| \le 1$. (In fact, all $\xi$'s in the denominator must be replaced by $\langle \xi \rangle_{\ell}$'s and $S^{j}$'s must be replaced by $S_{(\ell)}^{j}$'s, but let's stay in the dream a little longer.) Hence by using this $\Psi$DO $B$, the necessary condition attained in Theorem \ref{20241015thm1} allows us to get rid of the term $\tIm \big (b_{k-2}(x)\big )D_{x}^{k-2}$ from the original differential operator $L$. The remaining coefficient of $(k-2)$th order term is now real-valued, so we can consider it as a part of a self-adjoint differential operator by altering $b_{j} (0 \le j \le k-3)$. Then we have \[
L_{(k-3)} = D_{t}-D_{x}^{k}-A_{(1)} - \sum_{j=0}^{k-3}b_{j}(x)D_{x}^{j}
\] on our hand, where $A_{(1)}$ is a self-adjoint differential operator of order $k-2$. Now we try eliminating the $(k-3)$th term. This time we use a change of variables $u=v+B_{-2}v$, where $B_{-2} \in S^{-2}$. After calculating formally just as before, we see that we want $B_{-2}$ to satisfy $\partial_{x}B_{-2} \approx -\frac{i}{k}\frac{1}{\xi^{2}}b_{k-3}(x)$. Again, this construction utilizes the necessary condition popping out from Theorem \ref{20241015thm1}, which is \[
\left | \int_{x}^{y}\tIm \big (b_{k-3}(\widetilde{x})\big)\dd\widetilde{x} \right | \lesssim |x-y|^{\frac{2}{k-1}}.
\] Note that Theorem \ref{20241015thm1} allowed us to ignore self-adjoint part $A_{(1)}$ of $L_{(k-3)}$ and concentrate solely on $b_{k-3}$. Assuming this condition to be satisfied, we have that the following operator \[
B=\frac{1}{k\xi^{2}}\int_{-\infty}^{x}\chi\left ( \frac{y-x}{\xi^{k-1}} \right )\tIm\big(b_{k-3}(y)\big)\dd y
\] satisfies $B \in S^{0}$ and $\partial_{x}B =\frac{1}{k\xi^{2}}\tIm \big (b_{k-3}(x)\big )$. (Again, we are being sloppy.) By using this $B$, we can only leave real parts in the coefficient of the $(k-3)$th order term, which can again be absorbed into self-adjoint part $A$. Now we have \[
L_{(k-4)} = D_{t}-D_{x}^{k}-A_{(2)}-\sum_{j=0}^{k-4}b_{j}(x)D_{x}^{j},
\] and the proof goes on by induction on $m$, the highest order in $L$ except for self-adjoint parts.

But how does the necessary condition obtained from Theorem \ref{20241015thm1} arise from the equation? The key scheme, duality testing argument, is to test the solution with an approximate solution for $L^{\ast}$. More precisely, let $u$ be a solution to $Lu=0$, and let $v$ be an approximate solution to $L^{\ast}v=0$ which is constructed explicitly. Then integrating $D_{t}\langle u, v \rangle = \langle u, L^{\ast}v \rangle$ with respect to $t$ gives \[
|\langle u(t=0), v(t=0)\rangle| \le \|u\|_{L^{2}}\|v\|_{L^{2}} + \int_{0}^{t}\|u\|_{L^{2}} \|L^{\ast}v\|_{L^{2}}.
\] Hence if $v$ is constructed so that $L^{\ast}v \approx 0$ and $v(t=0)=u(t=0)$ are satisfied, then $\|v\|_{L^{2}}$ must be large enough. This gives some condition that comes out from the explicit formula of $v$. However, a single use of this argument does not give the result in Theorem \ref{20241015thm1} at once. With one iteration of the duality testing argument, one can only prove \[
\left | \int_{x}^{y}\tIm \big (b_{k-2}(\widetilde{x})\big)\dd\widetilde{x} \right | \lesssim |x-y|^{\frac{1}{2}}.
\] Using this estimate, one can construct another approximate solution $v$ and use the  duality testing argument again to prove \[
\left | \int_{x}^{y}\tIm \big ( b_{k-2}(\widetilde{x})\big)\dd\widetilde{x} \right | \lesssim |x-y|^{\frac{1}{3}}, \quad  \text{and }\quad \left | \int_{x}^{y}\tIm \big ( b_{k-3}(\widetilde{x})\big)\dd\widetilde{x} \right | \lesssim |x-y|^{\frac{2}{3}}.
\] After $r$ iterations of the duality testing argument, one gets \[
\left | \int_{x}^{y}\tIm \big (d_{k-1-j}(\widetilde{x})\big)\dd\widetilde{x} \right | \lesssim |x-y|^{\frac{j}{r+1}} \quad (1 \le j \le r),
\] where $d_{k-1-j} \in \overline{C}^{\infty}(\RR)$ are functions depending on $b_{k-2}, \cdots, b_{k-1-j}$. (Although $d_{k-2}=b_{k-2}$ and $d_{k-3}=b_{k-3}$, $d_{k-1-j} = b_{k-1-j}$ is not true in general.) This is how one gets the conclusion of Theorem \ref{20241015thm1}.

\begin{figure}[h!]
\[\begin{tikzcd}[column sep = 3cm, row sep=1cm]
 L=D_{t}-D_{x}^{4}-b(x)D_{x}^{2}-c(x)D_{x} \arrow[d] & {} \\
\boxed{\begin{tabular}{c}\text{Necessary condition from Theorem \ref{20241015thm1}} \\ $\left | \int_{x}^{y}\tIm \big (b(\widetilde{x})\big)\dd\widetilde{x}\right | \lesssim |x-y|^{\frac{1}{3}} \cdot\cdot\cdot\cdot\cdot \raisebox{.5pt}{\textcircled{\raisebox{-.9pt}{1}}}$\end{tabular}} \arrow[r, 
"False"] \arrow[d, "True"] & {\text{Illposed}} \\
\begin{tabular}{c}Construct $\Psi$DO $\Phi_{1}$ using \raisebox{.5pt}{\textcircled{\raisebox{-.9pt}{1}}}. \\
$L^{2}$-wellposedness of $L \;\;\;\qquad$ \\
$\Leftrightarrow$ $L^{2}$-wellposedness of $L_{(1)}$, where \\
$L_{(1)} = \Phi_{1}^{-1} \circ L \circ \Phi_{1}$$\quad$$\quad$$\quad$$\quad$ \\
$\quad\quad\quad\quad\,\;\:= D_{t}-D_{x}^{4}-A_{(1)}-c_{(1)}(x)D_{x}$ \\ $\quad\quad\;\;$($A_{(1)}$ is self-adjoint) \end{tabular} \arrow[d]  & {} \\
\boxed{\begin{tabular}{c}\text{Necessary condition from Theorem \ref{20241015thm1}} \\ $\left | \int_{x}^{y}\tIm \big (c_{(1)}(\widetilde{x})\big)\dd\widetilde{x}\right | \lesssim |x-y|^{\frac{2}{3}} \cdot\cdot\cdot\cdot\cdot \raisebox{.5pt}{\textcircled{\raisebox{-.9pt}{2}}}$\end{tabular}} \arrow[r, 
"False"] \arrow[d, "True"] & {\text{Illposed}} \\
\begin{tabular}{c}Construct $\Psi$DO $\Phi_{2}$ using \raisebox{.5pt}{\textcircled{\raisebox{-.9pt}{2}}}. \\
\;\;\: $L^{2}$-wellposedness of $L_{(1)} \;\;\;\qquad$ \\
$\Leftrightarrow$ $L^{2}$-wellposedness of $L_{(2)}$, where \\
$L_{(2)} = \Phi_{2}^{-1} \circ L_{(1)} \circ \Phi_{2}$ \\
$\quad$\;\,\,$= D_{t}-D_{x}^{4}-A_{(2)}$ \\ $\quad \quad \quad \quad \;\;\;$($A_{(2)}$ is self-adjoint) \end{tabular} \arrow[d] & {} \\
{\text{Wellposed}} & {}
\end{tikzcd}
\]
\caption{Proof scheme in case $k=4$}
\end{figure}
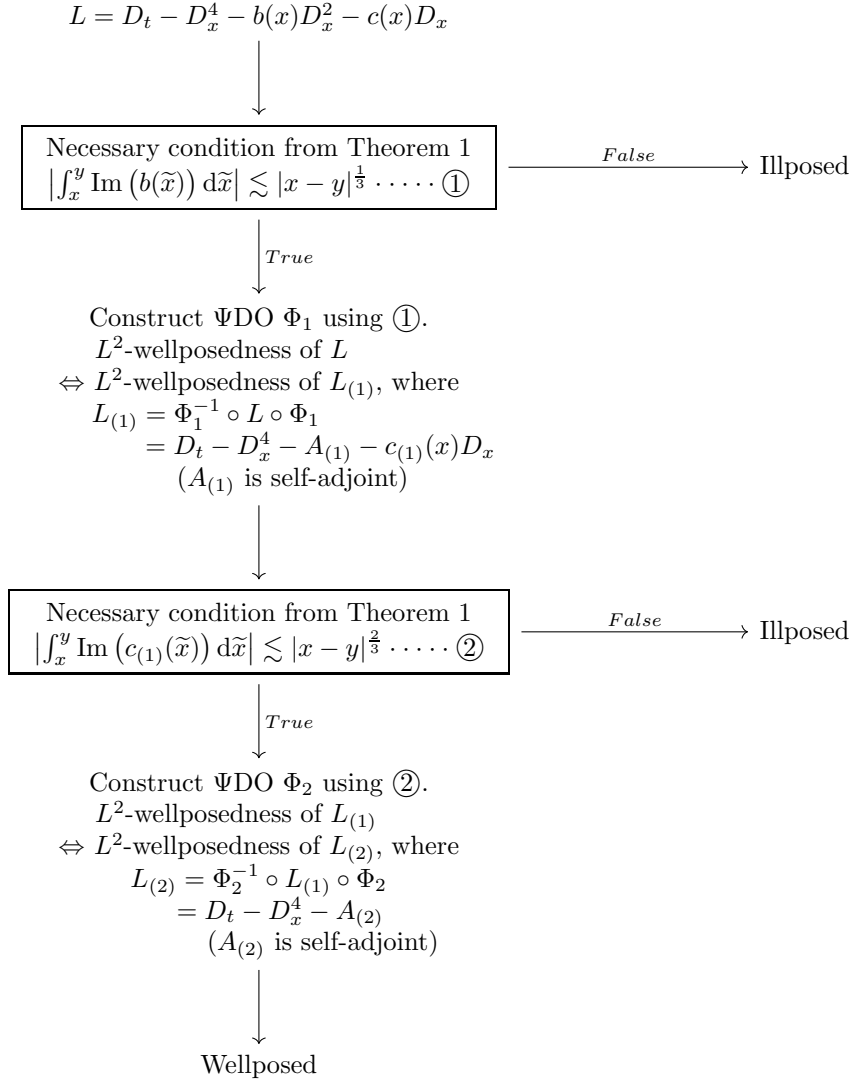

As a specific example, let's consider the $L^{2}$-wellposedness of the Cauchy problem \begin{eqnarray}\label{20241007eq1}
\begin{cases}
Lu := (D_{t}-D_{x}^{4}-b(x)D_{x}^{2}-c(x)D_{x}-d(x))u=0 &  (x, t )\in \RR^{2} \\
u(x, t=0)=u_{0}(x) & x \in \RR
\end{cases},
\end{eqnarray} where $b, c, d\in \overline{C}^{\infty}(\RR)$. This is \eqref{20241006eq1} in case $k=4$.

First of all, if \eqref{20241007eq1} is $L^{2}$-wellposed, then using duality testing argument twice gives \begin{eqnarray}\label{20241007eq2}
\left | \int_{x}^{y} b(\widetilde{x})\dd\widetilde{x} \right | \lesssim |x-y|^{\frac{1}{3}}.
\end{eqnarray} Assume this condition from now on. Define two formal $\Psi$DOs \[
\Phi_{0, 1}(x, \xi) := \frac{1}{4\xi}\int_{-\infty}^{x}\chi\left ( \frac{y-x}{\xi^{3}}\right )\tIm \big (b(y)\big)\dd y \quad \text{and }\quad \Phi_{1}:=e^{\Phi_{0, 1}}\left ( 1 + \frac{1}{4\xi}\partial_{\xi}\Phi_{0, 1} \tRe\big(b(x)\big) \right ).
\] Then using \eqref{20241007eq2}, one has $\Phi_{0, 1}, \Phi_{1} \in S^{0}$ at least formally. Hence $\Phi_{1}$ is an $L^{2}$-bounded operator, and assume that the inverse of $\Phi_{1}$ is also $L^{2}$-bounded. Then the $L^{2}$-wellposedness of \eqref{20241007eq1} turns down into an $L^{2}$-wellposedness problem for $L_{(1)}u=0$, where \[
L_{(1)} := \Phi_{1}^{-1} \circ L \circ \Phi_{1} \equiv D_{t}-D_{x}^{4}-\tRe\big(b(x)\big)D_{x}^{2}-\left ( c(x) - \frac{3}{2}\tIm\big(b'(x)\big) \right )D_{x} \pmod{S^{0}}.
\] Observe that $A_{(1)}:=\tRe\big(b(x)\big)D_{x}^{2}-i\tRe\big(b'(x)\big)D_{x}$ is self-adjoint. Hence, \[
L_{(1)} \equiv D_{t}-D_{x}^{4}-A_{(1)}-\left ( c(x) - \frac{3}{2}\tIm\big(b'(x)\big)+i\tRe\big(b'(x)\big) \right )D_{x} =: D_{t}-D_{x}^{4}-A_{(1)}-c_{(1)}(x)D_{x} \pmod{S^{0}}.
\] Now by duality testing argument again (Theorem \ref{20241015thm1}), for $L_{(1)}$ to be $L^{2}$-wellposed we must have \begin{eqnarray}\label{20241007eq3}
\left | \int_{x}^{y} c_{(1)}(\widetilde{x})\dd\widetilde{x} \right | \lesssim |x-y|^{\frac{2}{3}}.
\end{eqnarray} Assume that this condition is true and define two formal $\Psi$DOs \[
\Phi_{0,2}(x, \xi) := \frac{1}{4\xi^{2}}\int_{-\infty}^{x} \chi\left(\frac{y-x}{\xi^{3}}\right)\tIm\big(c_{(1)}(y)\big)\dd y \quad \text{and }\quad \Phi_{2} := e^{\Phi_{0, 2}} \left ( 1 + \frac{1}{4\xi}\partial_{\xi}\Phi_{0,1}\tRe\big(b(x)\big)\right ).
\] Then from \eqref{20241007eq3} one formally has $\Phi_{0, 2}, \Phi_{2} \in S^{0}$, and assuming that $\Phi_{2}$ has bounded inverse in $L^{2}$, the $L^{2}$-wellposedness of $L_{(1)}u=0$ is equivalent to that of $L_{(2)}u=0$ where \[
L_{(2)} := \Phi_{2}^{-1} \circ L \circ \Phi_{2} \equiv D_{t} - D_{x}^{4}-A_{(1)}-\tRe\big(c_{(1)}(x)\big)D_{x} \pmod{S^{0}}.
\] Since $D_{x}^{4}+A_{(1)}+\tRe\big(c_{(1)}(x)\big)D_{x}$ is self-adjoint, the $L^{2}$-wellposedness of $L_{(2)}u=0$ is now trivial. Thus, the $L^{2}$-wellposedness of the original Cauchy problem \eqref{20241007eq1} is equivalent to a set of two conditions \eqref{20241007eq2} and \eqref{20241007eq3}.

\subsection{Organization of the paper}

The rest of this paper is organized as follows. Section \ref{sec2} is devoted to Theorem \ref{20241015thm1}, which gives a necessary condition for the $L^{2}$-wellposedness of \eqref{20241006eq1}. In Sections \ref{sec3} and \ref{sec4}, we study equivalent conditions for $L^{2}$-wellposedness of \eqref{20241006eq1} in case $k=5$ (Theorem \ref{20241015thm2}) and $k=6$ (Theorem \ref{20241015thm3}) respectively, where some calculations are skipped. These calculations are presented in the Appendix \ref{appB}. In Appendix \ref{appA}, we prsent the proof of $L^{2}$-wellposedness for the self-adjoint case using pseudodifferential operators.

\subsection{Acknowledgments}

The author would like to thank In-Jee Jeong for introducing the problem and providing thoughtful discussions. The author also thanks Hideaki Sunagawa for suggesting Mizuhara's and Tarama's works as references. The author was supported by the 2024 Undergraduate Research Internship Program from College of Natural Sciences, Seoul National University.

\section{Illposedness criteria}\label{sec2}

First of all, we record one calculation result as a lemma. Similar calculations can be found in \cite[Proposition 2.1]{Tarama11}, but our closer scrutiny reveals more specific information about the polynomials $\left \{P_{k-2-m;\ell, j}\right \}$ defined below.

Recall that for an infinite set $S$, a polynomial ring $\CC[S]$ with infinitely many indeterminates is defined by the set of finite linear combination of monomials, where each monomials are expressed as a finite multiplication of elements in $S$. Let $\CC\Big[\big\{x_{\alpha}^{(\beta)}\big\}_{0 \le \alpha \le k-2, \beta \ge 0}\Big]$ be a polynomial ring with indeterminates $\big\{x_{\alpha}^{(\beta)}: 0 \le \alpha \le k-2, \beta \ge 0\big\}$. An element in this ring will be denoted by $P\Big (\big \{x_{\alpha}^{(\beta)}\big \}_{0 \le \alpha \le k-2, \beta \ge 0}\Big )$. This polynomial evaluated at $\big\{x_{\alpha}^{(\beta)}\big\}=\big\{y_{\alpha}^{(\beta)}\big\}$ will be denoted by $P\Big(\big(y_{\alpha}^{(\beta)}\big)_{0 \le \alpha \le k-2, \beta \ge 0}\Big)$, i.e., if the variables in $P$ are enclosed by curly braces then the variables are used as indeterminates to express $P$, and if the variables in $P$ are enclosed by parentheses then it is an evaluation of $P$ at those variables. For example, if $P_{1}\Big(\big\{x_{\alpha}^{(\beta)}\big\}_{0 \le \alpha \le k-2, \beta \ge 0}\Big)=x_{0}^{(2)}+x_{1}^{(1)}x_{2}^{(0)}$, then $P_{1}\Big(\big(\partial_{x}^{\beta}b_{\alpha}(x)\big)_{0 \le \alpha \le k-2, \beta \ge 0}\Big)=\partial_{x}^{2}b_{0}+(\partial_{x}b_{1})b_{2}$. Note that if $P\Big(\big\{x_{\alpha}^{(\beta)}\big\}_{0 \le \alpha \le k-2, \beta \ge 0}\Big) \in \CC\Big[\big\{x_{\alpha}^{(\beta)}\big\}_{0 \le \alpha \le k-2, \beta \ge 0}\Big]$, then $P\Big(\big(\partial_{x}^{\beta}b_{\alpha}(x)\big)_{0 \le \alpha \le k-2, \beta \ge 0}\Big)$ is a function in $x$, which is actually in $\overline{C}^{\infty}(\RR)$.

For convenience, define the following polynomial rings
\begin{IEEEeqnarray*}{rCl}
\cP_{p, q} &:=& \begin{cases}\left \{ P(\{x_{\alpha}^{(\beta)}\}_{p \le \alpha \le q, \beta \ge 0}): P \in \CC[\{x_{\alpha}^{(\beta)}\}_{p \le \alpha \le q, \beta \ge 0}] \subset \CC[\{x_{\alpha}^{(\beta)}\}_{0 \le \alpha \le k-2, \beta \ge 0}] \right \} & \text{ if }0 \le p \le q \le k-2, \\
\CC & \text{ if }p>q, \\
\cP_{\max\{p, 0\}, \min\{q, k-2\}} & \text{ otherwise.} \\
\end{cases}
\end{IEEEeqnarray*}
So, an element of $\cP_{p, q}$ is a polynomial in $ \CC[\{x_{\alpha}^{(\beta)}\}_{0 \le \alpha \le k-2, \beta \ge 0}]$ which is independent of all the $x_{\alpha}^{(\beta)}$ with $\alpha<p$ or $\alpha>q$. In particular, $\CC[\{x_{\alpha}^{(\beta)}\}_{0 \le \alpha \le k-2, \beta \ge 0}]=\cP_{0, k-2}$. Now we are ready to state our lemma.

\begin{lem} \label{20241007lem1}
Fix an integer $k \ge 2$. Then there exist polynomials $P_{k-2-m;\ell, j} \in  \cP_{0,k-2}$ $(0 \le m \le k-2, \; -\frac{(m+1)mk}{2} \le \ell \le k-2, \; 0 \le j \le k)$ which satisfy the following four properties.
\begin{itemize}
\item[(i)] Let $b_{j}(0 \le j \le k-2)$ be arbitrary $\overline{C}^{\infty}(\RR)$ functions and let \[
L=D_{t}-D_{x}^{k}-\sum_{j=0}^{k-2}b_{j}D_{x}^{j}.
\] For each integer $0 \le m \le k-2$, define \[
\widetilde{\psi}_{k-2-m}(\xi; x, t):= ix\xi+it\xi^{k}-\sum_{q=1}^{m}\frac{i}{k\xi^{q}}\int_{0}^{x}P_{k-1-q;k-1-q,0}\Big(\big(\partial_{x}^{\beta}\overline{b}_{\alpha}(\widetilde{x})\big)_{0 \le \alpha \le k-2, \beta \ge 0}\Big)\dd\widetilde{x}.
\] Then for any $0 \le m \le k-2$, \[
e^{-\widetilde{\psi}_{k-2-m}} \circ L^{\ast} \circ e^{\widetilde{\psi}_{k-2-m}}=D_{t}-k\xi^{k-1}D_{x} - \sum_{\ell=-\frac{(m+1)mk}{2}}^{k-2}\sum_{j=0}^{k}\xi^{\ell}P_{k-2-m;\ell, j}\Big(\big(\partial_{x}^{\beta}\overline{b}_{\alpha}(x)\big)_{0 \le \alpha \le k-2, \beta \ge 0}\Big)D_{x}^{j}.
\] 
\item[(ii)] For any $0 \le m \le k-2$ and $k-m-1 \le \ell \le k-2$, $P_{k-2-m;\ell, 0} \equiv 0$.
\item[(iii)] For each $0 \le m \le m' \le k-2$, $P_{k-2-m; k-2-m', 0}\Big(\big\{x_{\alpha}^{(\beta)}\big\}_{0 \le \alpha \le k-2, \beta \ge 0}\Big)-x_{k-2-m'}^{(0)}$ is a polynomial in $\left \{x_{\alpha}^{(\beta)}: k-1-m' \le \alpha \le k-2, \; \beta \ge 0\right \}$. In other words, \[
P_{k-2-m; k-2-m', 0}\Big(\big\{x_{\alpha}^{(\beta)}\big\}_{0 \le \alpha \le k-2, \beta \ge 0}\Big)-x_{k-2-m'}^{(0)} \in \cP_{k-1-m', k-2}.
\]
\item[(iv)] For any $0 \le m \le k-2$, $0 \le \ell \le k-2$, and $1 \le j \le k$, \[
P_{k-2-m;\ell,j} \in {\cP}_{\min\{j+\ell,k-1-m\}, k-2}.
\]
\end{itemize}
\end{lem}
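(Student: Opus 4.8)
The plan is to prove all four properties simultaneously by induction on $m$, with the polynomials $P_{k-2-m;\ell,j}$ constructed recursively from the $P_{k-1-m;\ell,j}$ via the conjugation that eliminates the next-highest-order term. For the base case $m=0$ we have $\widetilde\psi_{k-2}=ix\xi+it\xi^k$, so $e^{-\widetilde\psi_{k-2}}\circ L^\ast\circ e^{\widetilde\psi_{k-2}}$ is a direct computation: conjugating $D_t$ produces $D_t - k\xi^{k-1}D_x + (\text{lower order in }\xi)$ from $D_x^k$, and conjugating each $\overline b_\alpha D_x^\alpha$ term (recall $L^\ast$ has principal symbol $D_t - D_x^k - \sum \overline b_\alpha D_x^\alpha$ up to lower-order commutator corrections) produces terms $\xi^\ell P_{k-2;\ell,j}(\cdots)D_x^j$. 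One reads off $P_{k-2;k-2,0}$: the coefficient of $\xi^{k-2}D_x^0$ is exactly $\overline b_{k-2}$, establishing (iii) for $m=m'=0$; properties (ii) and (iv) for $m=0$ follow by inspecting which $\overline b_\alpha$ enter each coefficient (only $\alpha\le k-2$, and the commutator structure of $D_x^j$ with $e^{ix\xi+it\xi^k}$ forces the claimed index ranges).

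\textbf{Inductive step.} Assuming the statement for $m$, write the operator $e^{-\widetilde\psi_{k-2-m}}\circ L^\ast\circ e^{\widetilde\psi_{k-2-m}}$ as in (i), and note $\widetilde\psi_{k-2-(m+1)} = \widetilde\psi_{k-2-m} - \frac{i}{k\xi^{m+1}}\int_0^x P_{k-1-(m+1);k-1-(m+1),0}(\cdots)\dd\widetilde x$. Thus $e^{-\widetilde\psi_{k-2-(m+1)}}\circ L^\ast\circ e^{\widetilde\psi_{k-2-(m+1)}} = e^{\Theta}\circ\big(e^{-\widetilde\psi_{k-2-m}}\circ L^\ast\circ e^{\widetilde\psi_{k-2-m}}\big)\circ e^{-\Theta}$ where $\Theta = \frac{i}{k\xi^{m+1}}\int_0^x P_{k-m-2;k-m-2,0}(\cdots)\dd\widetilde x$ — wait, the index is $k-1-(m+1)=k-m-2$, so $\Theta$ carries a factor $\xi^{-(m+1)}$. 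Conjugating the leading operator $D_t - k\xi^{k-1}D_x$ by $e^{\pm\Theta}$ produces $D_t - k\xi^{k-1}D_x + k\xi^{k-1}(\partial_x\Theta) + (\text{lower}) = D_t - k\xi^{k-1}D_x - i\xi^{k-2-m}P_{k-m-2;k-m-2,0}(\cdots)D_x^0 + \cdots$, which by the inductive form of (iii) (applied with $m'=m$, i.e. the coefficient $P_{k-2-m;k-2-m,0}$ equals $\overline b_{k-2-m}$ modulo $\cP_{k-1-m,k-2}$) is designed to cancel the $\xi^{k-2-m}D_x^0$ term $\xi^{k-2-m}P_{k-2-m;k-2-m,0}(\cdots)D_x^0$ appearing in the operator for step $m$. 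This is precisely the mechanism advertised in Section \ref{subsec1.3}. One then collects all the newly generated terms — from conjugating every $\xi^\ell P_{k-2-m;\ell,j}D_x^j$ by $e^{\pm\Theta}$, which via symbolic calculus generates $\xi$-powers shifted down by multiples of $(m+1)$ and by derivatives of $\Theta$ — and defines $P_{k-2-(m+1);\ell,j}$ to be the total coefficient of $\xi^\ell D_x^j$. The new lower bound on $\ell$ drops from $-\frac{(m+1)mk}{2}$ to $-\frac{(m+2)(m+1)k}{2}$: each further conjugation by a symbol of order $-(m+1)$ in $\xi$ (times a $\xi^{k-1}$ from the leading term, and with iterated commutators costing more) lowers $\xi$-degree by at most $(m+1)$ per commutator bracket, and the worst term comes from roughly $k$ nested brackets with the $D_x^k$–derived part, giving the stated arithmetic-progression sum.

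\textbf{Verifying the index-tracking properties (ii)--(iv).} Property (ii) says $P_{k-2-(m+1);\ell,0}\equiv0$ for $k-(m+1)-1\le\ell\le k-2$; the point is that the cancellation just described removes exactly the top surviving zeroth-order coefficient at each stage, so after $m+1$ steps all zeroth-order coefficients with $\xi$-exponent $\ge k-m-2$ vanish. Property (iii), for the new value $m+1$ and any $m'\ge m+1$, requires that $P_{k-2-(m+1);k-2-m',0} - x_{k-2-m'}^{(0)}\in\cP_{k-1-m',k-2}$: the correction term $\Theta$ has $\xi$-exponent $-(m+1)$, so acting on $\xi^{k-1}D_x$ it contributes at $\xi$-exponent $k-2-m > k-2-m'$, hence does not touch the $\xi^{k-2-m'}$ coefficient, which therefore keeps its inductive form; one also checks the new conjugation of the old $P_{k-2-m;k-2-m',0}$ term contributes only things in $\cP_{k-1-m',k-2}$ because $\partial_x^\beta\overline b_\alpha$ with $\alpha\le k-2-m'-1$ never enters (they were absent before, by induction, and $\Theta$ only involves $\overline b_\alpha$ with $\alpha\ge k-m-2\ge k-1-m'$). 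Property (iv), $P_{k-2-(m+1);\ell,j}\in\cP_{\min\{j+\ell,k-1-(m+1)\},k-2}$, is the most bookkeeping-intensive: one must track, through every commutator in the symbolic expansion, that a term producing $\xi^\ell D_x^j$ can only involve $\overline b_\alpha$ with $\alpha\ge j+\ell$ (a ``total-order'' conservation: $D_x^k$ or $\overline b_\alpha D_x^\alpha$ lose order to $D_x$ exactly as much as they gain $\xi$, with strict loss only through commutators that do not create $\overline b$'s of lower index), and separately that no $\overline b_\alpha$ with $\alpha$ below $k-1-(m+1)$ can appear because $\Theta$ is built only from $P_{k-m-2;k-m-2,0}$, which by the inductive (iii)/(iv) involves only $\overline b_\alpha$ with $\alpha\ge k-m-2=k-1-(m+1)$.

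\textbf{Main obstacle.} The genuine difficulty is not the conjugation computation itself but the simultaneous propagation of (iv) through the inductive step: one needs a clean invariant — essentially ``any monomial in the symbol carrying $\xi^\ell D_x^j$ is a product of factors $\partial_x^{\beta_i}\overline b_{\alpha_i}$ with $\sum\alpha_i\ge j+\ell$ and each $\alpha_i\ge k-1-m$'' — that is preserved both by composition with the leading operator and by the commutator terms in the symbolic calculus, and that is strong enough to re-derive (ii), (iii), (iv) for $m+1$. Formulating this invariant precisely (including how $x$-derivatives $\partial_x$ interact, since $\partial_x\Theta$ and higher $\partial_x$-derivatives appear) and checking it survives each of the finitely many term-types generated by the symbolic expansion of $e^{\Theta}\circ(\,\cdot\,)\circ e^{-\Theta}$ is where the care in ``our closer scrutiny'' relative to \cite[Proposition 2.1]{Tarama11} is spent; the arithmetic for the $\ell$-range lower bound $-\frac{(m+1)mk}{2}$ is then a routine consequence of counting at most $k$ commutators per stage each costing $m+1$ in $\xi$-degree, summed over stages $1,\dots,m$.
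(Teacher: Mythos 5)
Your strategy is the paper's strategy: induct on $m$, pass from $\widetilde{\psi}_{k-1-m}$ to $\widetilde{\psi}_{k-2-m}$ by an exponential correction whose $x$-derivative is a multiple of the top surviving zeroth-order coefficient, observe that the contribution of $-k\xi^{k-1}D_x$ cancels that coefficient, define the new $P$'s as the collected coefficients of $\xi^{\ell}D_x^{j}$, and propagate (ii)--(iv) by tracking which $\overline{b}_{\alpha}$ can occur. Two local corrections to how you describe the mechanism. First, everything here is conjugation of a differential operator by a multiplication operator (a function of $x,t$ with $\xi$ as a parameter), so the expansion is an exact Leibniz/Bell-polynomial identity --- there are no ``$(\text{lower})$'' remainders, which is why (i) is an equality and not a congruence. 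Second, the cancellation at order $\xi^{k-2-m}$ requires no appeal to (iii): since $\partial_{x}\Theta=\frac{i}{k\xi^{m+1}}P_{k-2-m;k-2-m,0}$, one has $e^{\Theta}\circ(-k\xi^{k-1}D_x)\circ e^{-\Theta}=-k\xi^{k-1}D_x-ik\xi^{k-1}\partial_x\Theta=-k\xi^{k-1}D_x+\xi^{k-2-m}P_{k-2-m;k-2-m,0}$, which cancels the term $-\xi^{k-2-m}P_{k-2-m;k-2-m,0}D_x^{0}$ identically, by construction (your sign/factor there, and the parenthetical use of (iii), are off).

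The genuine gap is exactly where you defer the work: your proposed invariant for (iv) --- every monomial has factors with each $\alpha_i\ge k-1-m$ and $\sum_i\alpha_i\ge j+\ell$ --- is neither correct nor needed. It already fails at $m=0$, where $P_{k-2;\ell,j}$ is a constant multiple of $B_{j+\ell}$, a polynomial \emph{linear} in the $x_{\alpha}^{(\beta)}$ with $\alpha\ge j+\ell$ and with no factor of index $\ge k-1$ (indices stop at $k-2$); and the uncapped sum condition is stronger than (iv), which only asserts membership in $\cP_{\min\{j+\ell,\,k-1-m\},k-2}$, and would be painful to push through the Bell-polynomial products. What actually closes the step is the explicit recursion produced by the exact expansion: in passing from level $m$ to $m+1$, $P_{\mathrm{new};\ell,j}=P_{\mathrm{old};\ell,j}+\sum_{q\ge1}\sum_{p\ge j+q}\binom{p}{j}P_{\mathrm{old};\ell+(m+1)q,p}\,Q_{p-j,q}$, where each $Q_{p-j,q}$ is a polynomial in $x$-derivatives of $\partial_x\Theta$ and hence lies in $\cP_{k-2-m,k-2}$ (because $P_{\mathrm{old};k-2-m,0}\in\cP_{k-2-m,k-2}$ by the inductive (iii)). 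Since $p\ge j+q$ and $q\ge1$ give $\ell+(m+1)q+p>\ell+j$, the inductive (iv) places each $P_{\mathrm{old};\ell+(m+1)q,p}$ in $\cP_{\min\{\ell+j,\,k-2-m\},k-2}$, and the cap at $k-2-m$ absorbs the multiplication by $Q$; the cases $j=0$ of the same recursion give (ii) and (iii) (note that the new contributions to the $\xi^{k-2-m'}D_x^{0}$ coefficient come from old terms with $p\ge q\ge1$, so their control also uses the inductive (iv), not merely the fact that $\Theta$ involves only high-index $\overline{b}_{\alpha}$, as your sketch suggests). So the bookkeeping you flag as the main obstacle is a few lines once the recursion is written down, but as it stands your sketch does not supply a valid argument for (iv) (nor, strictly, for (iii)).
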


\begin{proof}
We will prove the lemma by induction on $m$. For each $m$, we will define $P_{k-2-m;\ell, j}$ so that (i) holds, and then prove that they satisfy (ii), (iii) and (iv).

We start by calculating $L^{\ast}$. Observe that 
\begin{IEEEeqnarray*}{rCl}
L^{\ast} &=& \left ( D_{t}-D_{x}^{k}-\sum_{\ell=0}^{k-2}b_{\ell}(x)D_{x}^{\ell} \right )^{\ast} \\
&=& D_{t} - D_{x}^{k} - \sum_{\ell=0}^{k-2} \sum_{j=0}^{\ell} \binom{\ell}{j}D_{x}^{\ell-j}\big (\overline{b}_{\ell}(x)\big )D_{x}^{j} \\
&=& D_{t} - D_{x}^{k} - \sum_{j=0}^{k-2}\left (\sum_{\ell=j}^{k-2}\binom{\ell}{j}D_{x}^{\ell-j}\big (\overline{b}_{\ell}(x)\big )\right )D_{x}^{j} \\
&=:& D_{t}-D_{x}^{k}-\sum_{j=0}^{k-2}B_{j}\Big (\big (\partial_{x}^{\beta}\overline{b}_{\alpha}(x)\big)_{0 \le \alpha \le k-2, \beta \ge 0}\Big)D_{x}^{j},
\end{IEEEeqnarray*}
where $B_{j} \in \cP_{0, k-2}$ $(0 \le j \le k-2)$ are polynomials which depend only on $k$. Note also that \begin{eqnarray}\label{20241008eq3}
B_{j}\Big (\big\{x_{\alpha}^{(\beta)}\big\}_{0 \le \alpha \le k-2, \beta \ge 0}\Big)-x_{j}^{(0)} \in \cP_{j+1, k-2} \quad \forall \; 0 \le j \le k-2.
\end{eqnarray} Now we start our induction on $m$. If $m=0$, then $\widetilde{\psi}_{k-2}(\xi, x, t)=ix\xi+it\xi^{k}$, so that 
\begin{IEEEeqnarray*}{rCl}
L^{\ast}_{k-2}&:=&e^{-\widetilde{\psi}_{k-2}} \circ L^{\ast} \circ e^{\widetilde{\psi}_{k-2}} \\
&=& (e^{-ix\xi}e^{-it\xi^{k}}) \circ \left (D_{t}-D_{x}^{k}-\sum_{j=0}^{k-2}B_{j}\Big (\big (\partial_{x}^{\beta}\overline{b}_{\alpha}(x)\big)_{0 \le \alpha \le k-2, \beta \ge 0}\Big)D_{x}^{j}\right ) \circ (e^{ix\xi}e^{it\xi^{k}}) \\
&=& \xi^{k} + D_{t} - \sum_{\ell=0}^{k}\binom{k}{\ell}\xi^{\ell}D_{x}^{k-\ell} - \sum_{j=0}^{k-2}B_{j}\Big (\big (\partial_{x}^{\beta}\overline{b}_{\alpha}(x)\big)_{0 \le \alpha \le k-2, \beta \ge 0}\Big)\sum_{\ell=0}^{j}\binom{j}{\ell}\xi^{\ell}D_{x}^{j-\ell} \\
&=& D_{t} - k\xi^{k-1}D_{x} - \sum_{\ell=0}^{k-2}\binom{k}{\ell}\xi^{\ell}D_{x}^{k-\ell} - \sum_{\ell=0}^{k-2}\sum_{j=0}^{k-2-\ell}B_{j+\ell}\Big (\big (\partial_{x}^{\beta}\overline{b}_{\alpha}(x)\big)_{0 \le \alpha \le k-2, \beta \ge 0}\Big)\binom{j+\ell}{\ell}\xi^{\ell}D_{x}^{j}  \\
&=:& D_{t} - k\xi^{k-1}D_{x} - \sum_{\ell=0}^{k-2}\sum_{j=0}^{k} \xi^{\ell} P_{k-2; \ell, j}\Big (\big (\partial_{x}^{\beta}\overline{b}_{\alpha}(x)\big)_{0 \le \alpha \le k-2, \beta \ge 0}\Big)D_{x}^{j}\yesnumber\label{20241008eq2}
\end{IEEEeqnarray*}
Here, the last line \eqref{20241008eq2} defines the polynomials $P_{k-2;\ell, j}$ by considering $\partial_{x}^{\beta}\overline{b}_{\alpha}$'s as independent indeterminates. These polynomials are dependent only on $k$. Now we check (ii), (iii), and (iv), for $m=0$.
\begin{itemize}
\item[(ii)] Since $m=0$, there is nothing to check.
\item[(iii)] Let $0 \le m' \le k-2$. Then from \eqref{20241008eq2}, \[
P_{k-2;k-2-m',0}\Big (\big\{x_{\alpha}^{(\beta)}\big\}_{0 \le \alpha \le k-2, \beta \ge 0}\Big)=B_{k-2-m'}\Big( \big\{x_{\alpha}^{(\beta)}\big\}_{0 \le \alpha \le k-2, \beta \ge 0}\Big).
\]
Hence by \eqref{20241008eq3}, $P_{k-2;k-2-m',0}\Big (\big\{x_{\alpha}^{(\beta)}\big\}_{0 \le \alpha \le k-2, \beta \ge 0}\Big)-x_{k-2-m'}^{(0)} \in \cP_{k-1-m', k-2}$, as desired.
\item[(iv)] For any $0 \le \ell \le k-2$ and $1 \le j \le k-2-\ell$, \eqref{20241008eq2} gives \[
P_{k-2;\ell, j}\Big (\big\{x_{\alpha}^{(\beta)}\big\}_{0 \le \alpha \le k-2, \beta \ge 0}\Big)=B_{j+\ell}\Big (\big\{x_{\alpha}^{(\beta)}\big\}_{0 \le \alpha \le k-2, \beta \ge 0}\Big)\binom{j+\ell}{\ell} \in \cP_{j+\ell, k-2}.
\] If $0 \le \ell \le k-2$ and $j \ge k-1-\ell$, then \eqref{20241008eq2} shows that $P_{k-2;\ell, j}$ is either zero or a constant polynomial. Hence $P_{k-2;\ell, j} \in \CC = \cP_{k-1,k-2} = \cP_{\min\{j+\ell,k-1\}, k-2}$.
\end{itemize}
Now assume that for some $1 \le m_{0} \le k-2$, $P_{k-1-m_{0};\ell, j}$ $(-\frac{m_{0}(m_{0}-1)k}{2} \le \ell \le k-2, \; 0 \le j \le k)$ are defined so that (i)-(iv) holds. In particular, we have \begin{IEEEeqnarray*}{rCl}
L^{\ast}_{k-1-m_{0}}&:=&e^{-\widetilde{\psi}_{k-1-m_{0}}}\circ L^{\ast} \circ e^{\widetilde{\psi}_{k-1-m_{0}}} \\
&=& D_{t}-k\xi^{k-1}D_{x} - \sum_{\ell=-\frac{m_{0}(m_{0}-1)k}{2}}^{k-2}\sum_{j=0}^{k}\xi^{\ell}P_{k-1-m_{0};\ell, j}\Big(\big(\partial_{x}^{\beta}\overline{b}_{\alpha}(x)\big)_{0 \le \alpha \le k-2, \beta \ge 0}\Big)D_{x}^{j}. \yesnumber \label{20241008eq4}
\end{IEEEeqnarray*} Now consider the case $m=m_{0}$. Noting that \[
\widetilde{\psi}_{k-2-m}=\widetilde{\psi}_{k-1-m}-\frac{i}{k\xi^{m}}\int_{0}^{x}P_{k-1-m;k-1-m,0}\Big ( \big (\partial_{x}^{\beta}\overline{b}_{\alpha}(\widetilde{x})\big )_{0 \le \alpha \le k-2, \beta \ge 0} \Big )\dd\widetilde{x}=:\widetilde{\psi}_{k-1-m} - \frac{1}{\xi^{m}}\phi_{k-1-m}(x),
\] one can calculate $L_{k-2-m}^{\ast}:=e^{-\widetilde{\psi}_{k-2-m}} \circ L^{\ast} \circ e^{\widetilde{\psi}_{k-2-m}}$ starting from \eqref{20241008eq4} as follows.
\begin{IEEEeqnarray*}{rCl}
\IEEEeqnarraymulticol{3}{l}{e^{-\widetilde{\psi}_{k-2-m}} \circ L^{\ast} \circ e^{\widetilde{\psi}_{k-2-m}}}\\
&=& \exp \left ( \frac{1}{\xi^{m}}\phi_{k-1-m}(x)\right ) \circ  L^{\ast}_{k-1-m} \circ
\exp\left ( -\frac{1}{\xi^{m}} \phi_{k-1-m}(x)\right ) \\
&=& D_{t}-k\xi^{k-1}D_{x} + \xi^{k-1-m} P_{k-1-m;k-1-m,0}\Big(\big(\partial_{x}^{\beta}\overline{b}_{\alpha}(x)\big)_{0 \le \alpha \le k-2, \beta \ge 0}\Big) \\
&& - \sum_{\ell=-\frac{m(m-1)k}{2}}^{k-2}\sum_{j=0}^{k}\bigg [\xi^{\ell}P_{k-1-m_;\ell, j}\Big(\big(\partial_{x}^{\beta}\overline{b}_{\alpha}(x)\big)_{0 \le \alpha \le k-2, \beta \ge 0}\Big) \\
&& \qquad \qquad \qquad \qquad \;\;\sum_{p=0}^{j} \binom{j}{p} \exp \left (\frac{1}{\xi^{m}}\phi_{k-1-m}(x)\right )D_{x}^{j-p}\left ( \exp \left (-\frac{1}{\xi^{m}}\phi_{k-1-m}(x)\right )\right ) D_{x}^{p}\bigg ] \\
&=& D_{t}-k\xi^{k-1}D_{x} + \xi^{k-1-m} P_{k-1-m;k-1-m,0}\Big(\big(\partial_{x}^{\beta}\overline{b}_{\alpha}(x)\big)_{0 \le \alpha \le k-2, \beta \ge 0}\Big) \\
&& - \sum_{\ell=-\frac{m(m-1)k}{2}}^{k-2}\xi^{\ell}P_{k-1-m;\ell, 0}\Big(\big(\partial_{x}^{\beta}\overline{b}_{\alpha}(x)\big)_{0 \le \alpha \le k-2, \beta \ge 0}\Big)  \\
&& - \sum_{\ell=-\frac{m(m-1)k}{2}}^{k-2}\sum_{j=1}^{k}\sum_{p=0}^{j-1}\xi^{\ell}P_{k-1-m;\ell, j}\Big(\big(\partial_{x}^{\beta}\overline{b}_{\alpha}(x)\big)_{0 \le \alpha \le k-2, \beta \ge 0}\Big) \binom{j}{p} \sum_{q=1}^{j-p}\xi^{-mq} \underbrace{c_{j-p, q} \Big ( \big ( -\partial_{x}^{\gamma}\phi_{k-1-m}(x)\big )_{\gamma \ge 1} \Big )}_{(\ast)} D_{x}^{p} \\
&& - \sum_{\ell=-\frac{m(m-1)k}{2}}^{k-2}\sum_{j=1}^{k}\xi^{\ell}P_{k-1-m;\ell, j}\Big(\big(\partial_{x}^{\beta}\overline{b}_{\alpha}(x)\big)_{0 \le \alpha \le k-2, \beta \ge 0}\Big) D_{x}^{j} \\
&=:& D_{t}-k\xi^{k-1}D_{x} - \sum_{\ell=-\frac{(m+1)mk}{2}}^{k-2}\sum_{j=0}^{k}\xi^{\ell}P_{k-2-m;\ell, j}\Big(\big(\partial_{x}^{\beta}\overline{b}_{\alpha}(x)\big)_{0 \le \alpha \le k-2, \beta \ge 0}\Big)D_{x}^{j}. \yesnumber \label{20241008eq5}
\end{IEEEeqnarray*}
Here, the penultimate line comes from the following expansion formula \[
\exp \left ( -\frac{1}{\xi^{m}}a(x) \right )D_{x}^{p}\left ( \exp \left ( \frac{1}{\xi^{m}} a(x) \right )\right )=\sum_{q=1}^{p}\frac{1}{\xi^{mq}}c_{p, q}\Big ( \big ( \partial_{x}^{\beta}a(x)\big )_{\beta \ge 1}\Big ) \quad (p \ge 1)
\] where $c_{p, q} \in \CC\Big[\big\{x^{(\beta)}\big\}_{\beta \ge 1}\Big]$ are polynomials depending only on $p, q$. (In fact, these $c_{p, q}$ are given by Bell polynomials.) Note that since $\partial_{x}\phi_{k-1-m}(x)=\frac{i}{k}P_{k-1-m;k-1-m,0}\Big(\big(\partial_{x}^{\beta}\overline{b}_{\alpha}(x)\big)_{0 \le \alpha \le k-2,\beta \ge 0}\Big )$ and $P_{k-1-m;k-1-m,0} \in \cP_{k-1-m,k-2}$ (inductive hypothesis), $(\ast)$ can be interpreted as a polynomial $Q_{k-1-m, j-p, q} \in\cP_{k-1-m,k-2}$ evaluated at $\big(\partial_{x}^{\beta}\overline{b}_{\alpha}(x)\big)_{0 \le \alpha \le k-2, \beta \ge 0}$. Hence \eqref{20241008eq5} defines the polynomials $P_{k-2-m;\ell, j}$ by considering $\partial_{x}^{\beta}\overline{b}_{\alpha}$'s as independent indeterminates. Now we are left to check (ii), (iii), and (iv).
\begin{itemize}
\item[(ii)] Let $k-m-1 \le \ell \le k-2$. Then from \eqref{20241008eq5}, we have \[
P_{k-2-m;\ell, 0}=\begin{cases}
P_{k-1-m;\ell, 0} & \text{ if }\ell \ge k-m,\\
P_{k-1-m;\ell, 0}-P_{k-1-m;k-1-m,0}=0 & \text{ if }\ell=k-m-1
\end{cases}
\] Then by induction hypothesis (ii) for $m-1$, we have $P_{k-2-m;\ell, 0}\equiv 0$.
\item[(iii)] Let $m \le m' \le k-2$. Then from \eqref{20241008eq5}, \[
P_{k-2-m;k-2-m',0}=P_{k-1-m;k-2-m',0}+\sum_{q=1}^{\lfloor \frac{m'}{m}\rfloor}\sum_{j=q}^{k}P_{k-1-m;k-2-m'+mq, j} Q_{k-1-m,j,q}.
\] By induction hypothesis (iv) for $m-1$, $P_{k-1-m;k-2-m'+mq,j} \in \cP_{\min\{k-2-m'+mq+j, k-m\}, k-2} \subset \cP_{k-m', k-2}$. We also have $Q_{k-1-m,j,q} \in \cP_{k-1-m,k-2}$, and by induction hypothesis (iii) for $m-1$, we have that $P_{k-1-m;k-2-m',0}\Big ( \big \{ x_{\alpha}^{(\beta)}\big \}_{0 \le \alpha \le k-2, \beta \ge 0}\Big )-x_{k-2-m'}^{(0)} \in \cP_{k-1-m',k-2}$. By combining all the information we conclude that \[
P_{k-2-m;k-2-m',0}\Big ( \big \{ x_{\alpha}^{(\beta)}\big \}_{0 \le \alpha \le k-2, \beta \ge 0}\Big )-x_{k-2-m'}^{(0)} \in \cP_{k-1-m',k-2},
\] as desired.
\item[(iv)] Let $0 \le \ell \le k-2$ and $1 \le j \le k$. Then from \eqref{20241008eq5}, one can calculate \[
P_{k-2-m;\ell, j} = \sum_{q=1}^{\min\{k-j,\lfloor \frac{k-\ell-2}{m}\rfloor\}}\sum_{p=j+q}^{k}\binom{p}{j}P_{k-1-m;\ell+mq, p}  Q_{k-1-m,p-j,q}+ P_{k-1-m;\ell, j}
\]
By induction hypothesis (iv) for $m-1$, $P_{k-1-m;\ell+mq+p} \in \cP_{\min\{\ell+mq+p,k-m\},k-2} \subset \cP_{\min\{\ell+j+m+1,k-m\},k-2}$. We also have $Q_{k-1-m,p-j,q} \in \cP_{k-1-m,k-2}$, and by induction hypothesis (iv) for $m-1$, $P_{k-1-m;\ell, j} \in \cP_{\min\{\ell+j,k-m\},k-2}$. Combining all the information allows us to conclude that \[
P_{k-2-m;\ell, j} \in \cP_{\min\{\ell+j, k-1-m\},k-2},
\] as desired.
\end{itemize}
This completes the inductive step and hence proves the Lemma.
\end{proof}

Using Lemma \ref{20241007lem1}, we can now construct approximate solutions to $L^{\ast}v=0$ and use duality testing arguments to obtain the necessary conditions for the $L^{2}$-wellposedness of \eqref{20241006eq1}.

\begin{prop} \label{20241008prop1}
Let $k \ge 2$ be a given integer and let $P_{k-2-m;\ell, j} \in \cP_{0,k-2}$ $(0 \le m \le k-2, \; -\frac{(m+1)mk}{2} \le \ell \le k-2, \; 0 \le j \le k)$ be polynomials as in Lemma \ref{20241007lem1}. Let $b_{j}(0 \le j \le k-2)$ be any given $\overline{C}^{\infty}(\RR)$ functions. Assume that the Cauchy problem \begin{eqnarray}\label{20241007eq5}
\begin{cases}
Lu:=\left ( D_{t}-D_{x}^{k}-\sum_{j=0}^{k-2}b_{j}(x)D_{x}^{j} \right )u =0 & (x, t) \in \RR^{2} \\
u(x, t=0)=u_{0}(x) & x \in \RR
\end{cases}
\end{eqnarray} is $L^{2}$-wellposed. Then, we have \[
\left | \int_{x}^{y}\tIm\bigg(P_{k-q;k-q,0}\Big ( \big ( \partial_{x}^{\beta}\overline{b}_{\alpha}(\widetilde{x})\big )_{0 \le \alpha \le k-2, \beta \ge 0}\Big )\bigg)\dd\widetilde{x} \right | \lesssim_{L} |x-y|^{\frac{q-1}{k-1}} \quad \forall \; 2 \le q \le k-1.
\]
\end{prop}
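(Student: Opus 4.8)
The plan is to run the ``duality testing argument'' in the style of \cite[Appendix A]{JeongOh23}: turn the assumed $L^{2}$-wellposedness of $L$ into an a priori inequality for $L^{\ast}$, feed into it the explicit near-solutions of $L^{\ast}v=0$ built from Lemma \ref{20241007lem1}, and rearrange. First I would record that if the Cauchy problem for $L$ is $L^{2}$-wellposed then, pairing a solution $u$ of $Lu=0$ against a smooth $v$ and using $\partial_{t}\langle u,v\rangle=-i\langle u,L^{\ast}v\rangle$, one obtains for every $T>0$ and every sufficiently regular $v$ on $\RR\times[0,T]$
\[
\|v(\cdot,0)\|_{L^{2}}\;\lesssim_{L}\;\|v(\cdot,T)\|_{L^{2}}+\int_{0}^{T}\|L^{\ast}v(\cdot,t)\|_{L^{2}}\dd t,
\]
after taking the supremum over initial data $u(\cdot,0)$ with $\|u(\cdot,0)\|_{L^{2}}\le 1$. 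Then I would prove the proposition by induction on $q$, from $q=2$ up to $q=k-1$; in the step for a given $q$ the previously established estimates for $P_{k-q';k-q',0}$ with $q'<q$ will be used only to keep certain symbols bounded.

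For the inductive step fix $x<y$ and put $h=|x-y|$. When $h\lesssim 1$ the estimate is immediate, since $P_{k-q;k-q,0}\big((\partial_{x}^{\beta}\overline{b}_{\alpha})\big)\in\overline{C}^{\infty}(\RR)$ is bounded and $\frac{q-1}{k-1}<1$, so assume $h\gtrsim 1$ and set $\lambda=h^{1/(k-1)}\gtrsim 1$. I would take $v(x,t)=e^{\widetilde{\psi}_{k-q}(\lambda;x,t)}\,g(x,t)$, where $\widetilde{\psi}_{k-q}$ is the phase of Lemma \ref{20241007lem1} with $m=q-2$: its $q-2$ correction terms involve only $P_{k-1-p;k-1-p,0}$ with $1\le p\le q-2$, and by the inductive hypothesis $\big|\int\tIm P_{k-1-p;k-1-p,0}\big|\lesssim(\mathrm{length})^{p/(k-1)}$ on intervals the size of $\operatorname{supp}v$, so each correction $\lambda^{-p}\int_{0}^{x}\tIm P_{k-1-p;k-1-p,0}$ is $O(1)$ there and $e^{\pm\widetilde{\psi}_{k-q}}$ is bounded on that support. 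By Lemma \ref{20241007lem1}(i), $L^{\ast}\circ e^{\widetilde{\psi}_{k-q}}=e^{\widetilde{\psi}_{k-q}}\circ\big(D_{t}-k\lambda^{k-1}D_{x}-\sum_{\ell,j}\lambda^{\ell}P_{k-q;\ell,j}D_{x}^{j}\big)$, and by property (ii) the leading order-$D_{x}^{0}$ term of the bracket is $\lambda^{k-q}P_{k-q;k-q,0}$. I would take $g=g_{1}\,e^{\Theta}$ with $g_{1}$ a cutoff of a suitable width transported at the group speed $k\lambda^{k-1}$ (so $(D_{t}-k\lambda^{k-1}D_{x})g_{1}=0$), and $\Theta$ chosen so that the full order-$D_{x}^{0}$ part of the bracket applied to $g$ vanishes, i.e. $(D_{t}-k\lambda^{k-1}D_{x})\Theta=\sum_{\ell\le k-q}\lambda^{\ell}P_{k-q;\ell,0}$. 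Solving this scalar transport equation along the bicharacteristics and arranging the direction of motion so that the packet sweeps $[x,y]$ over a time $T\sim h/(k\lambda^{k-1})$, the $i$ in $D_{t}=\tfrac1i\partial_{t}$ turns the $\ell=k-q$ term into a real amplitude factor and the lower $\ell$ are harmless for $\lambda\gtrsim 1$, so that
\[
\|v(\cdot,T)\|_{L^{2}}\;\lesssim_{L}\;\exp\!\Big(-\tfrac1k\lambda^{1-q}\!\int_{x}^{y}\tIm P_{k-q;k-q,0}\big((\partial_{x}^{\beta}\overline{b}_{\alpha}(\widetilde{x}))\big)\dd\widetilde{x}\Big)\,\|v(\cdot,0)\|_{L^{2}}.
\]

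Plugging this $v$ into the a priori inequality, once one shows $\int_{0}^{T}\|L^{\ast}v\|_{L^{2}}\dd t\le\frac12 c\,\|v(\cdot,0)\|_{L^{2}}$ with $c$ the implicit constant, the term $\|v(\cdot,0)\|$ may be absorbed, giving $\exp\!\big(-\tfrac1k\lambda^{1-q}\int_{x}^{y}\tIm P_{k-q;k-q,0}\big)\gtrsim_{L}1$; if the packet's orientation is picked so the exponent is $\le 0$ this reads $\int_{x}^{y}\tIm P_{k-q;k-q,0}\lesssim_{L}\lambda^{q-1}=h^{(q-1)/(k-1)}$, and running the same construction on the reversed interval yields the two-sided bound, which is the claim. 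The step I expect to be the main obstacle is exactly the error estimate $\int_{0}^{T}\|L^{\ast}v\|_{L^{2}}\dd t=o\big(\|v(\cdot,0)\|_{L^{2}}\big)$: after the transport cancellation the surviving terms of $L^{\ast}v=e^{\widetilde{\psi}_{k-q}}\big(D_{t}-k\lambda^{k-1}D_{x}-\sum_{\ell,j}\lambda^{\ell}P_{k-q;\ell,j}D_{x}^{j}\big)g$ are the $\lambda^{\ell}P_{k-q;\ell,j}D_{x}^{j}g$ with $j\ge1$ (these repackage the pieces of $D_{x}^{k}$ hitting the cutoff), of $L^{2}$-size $\lesssim\lambda^{\ell}(\mathrm{width})^{-j}\|v\|$ since $D_{x}\Theta=O(\lambda^{1-q})$ is negligible, plus the error from $e^{\Theta}$ solving its transport equation only approximately. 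Property (iv) of Lemma \ref{20241007lem1} pins down which $b_{\alpha}$ each coefficient involves, but the decisive point is to choose the cutoff width so that simultaneously (a) the packet stays coherent (dispersion $\lesssim$ width), (b) $\lambda\cdot(\mathrm{width})\gg1$, (c) the phase corrections stay $O(1)$, and (d) $\int_{0}^{T}\lambda^{\ell}(\mathrm{width})^{-j}\dd t=o(1)$ for every surviving $(\ell,j)$; with $\lambda=h^{1/(k-1)}$ one checks that width $\sim h^{(k-2)/(k-1)}$ makes (a)--(d) hold for $h\gtrsim 1$, the worst surviving term being of size $\lambda^{k-3}(\mathrm{width})^{-1}\sim h^{-1/(k-1)}\to 0$ over a time $T=O(1)$. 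Carrying out and verifying these balances is where the real work lies, and it is precisely the bookkeeping that the sharpened form of Lemma \ref{20241007lem1} (as opposed to \cite[Proposition 2.1]{Tarama11}) is meant to make tractable.
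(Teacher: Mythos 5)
Your overall set-up (conjugating $L^{\ast}$ by the phase from Lemma \ref{20241007lem1}, testing against a wave packet transported at speed $k\xi^{k-1}$, and converting wellposedness into a weighted endpoint comparison) is the same device the paper uses, but your induction \emph{on $q$ at the final exponent} does not close for $2\le q<k-1$, and this is a genuine gap rather than bookkeeping. At step $q$ you must incorporate the term $\xi^{k-q}P_{k-q;k-q,0}$ into the amplitude (your $\Theta$, which is exactly the next phase correction $\frac{1}{k\xi^{q-1}}\int\tIm P_{k-q;k-q,0}$): this is the signal, and for it only the trivial sup bound is available at this stage. Over your packet width $W\sim\lambda^{k-2}$ its real part can vary by as much as $W\sup|\tIm P_{k-q;k-q,0}|/(k\lambda^{q-1})\sim\lambda^{k-1-q}$, which is unbounded for $q<k-1$; so the weight $e^{\tRe\Psi}$ is not comparable across the packet, $\|v(\cdot,0)\|_{L^{2}}$ cannot be identified with $e^{\tRe\Psi(x_{1})}W^{1/2}$, and even an averaged version of the endpoint comparison only determines $\int_{x}^{y}\tIm P_{k-q;k-q,0}$ up to the window-scale uncertainty $\sim\lambda^{k-2}$, which exceeds the target $\lambda^{q-1}$ except when $q=k-1$. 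Shrinking the width to $\lambda^{q-1}$ (the paper's choice) restores comparability but then the surviving $j\ge1$ error terms contribute $\sim\xi^{k-1-q}$ per unit time, i.e.\ $\sim h^{(k-1-q)/(k-1)}\gg1$ over the $O(1)$ sweep at $\lambda=h^{1/(k-1)}$. This tension is precisely why the paper does not prove the sharp exponent one coefficient at a time: it runs a simultaneous bootstrap, proving at stage $m$ the weaker bounds $|x-y|^{(q-1)/m}$ for all $2\le q\le m$ with frequency $|\xi|\sim h^{1/(m+1)}$ and width $\xi^{m}$, and only the last stage ($m=k-2\to k-1$), where width $=\xi^{k-2}$ and $|\xi|\sim h^{1/(k-1)}$, coincides with your proposed step; your scheme is essentially that last stage attempted at every $q$, and the intermediate estimates it presupposes are exactly what the bootstrap is needed to supply.

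A second, more local, problem is your absorption step: you bound $\int_{0}^{T}\|L^{\ast}v\|_{L^{2}}\dd t$ by symbol sizes times $\|v(\cdot,0)\|_{L^{2}}$, but $\|L^{\ast}v(\cdot,t)\|_{L^{2}}$ carries the weight $e^{\tRe\Psi}$ at the \emph{intermediate} packet positions, which is not a priori comparable to its value at $t=0$ (its control is part of what is being proved). The paper resolves this by restarting the inequality at the time $t_{0}$ maximizing $\tRe\psi$ along the trajectory, so the error is absorbed into the left-hand side with a factor $|t|\cdot(|\xi|^{-k^{3}}+|\xi|^{k-2-m})/(k|\xi|^{k-1})$ made small by the choice of $|\xi|$; this repair is available to you and should be stated, but it does not remove the structural obstruction above. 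In short: keep your packet and error analysis, but reorganize the induction into the paper's form (or find a genuinely new device to control the in-packet variation of the yet-unestimated correction), since as written both the base case $q=2$ (for $k\ge4$) and all steps $q<k-1$ fail.
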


\begin{proof}
We will prove by induction that for any integer $1 \le m \le k-1$, the following estimates hold: \begin{eqnarray}\label{20241007eq9}
\left | \int_{x}^{y}\tIm\bigg(P_{k-q;k-q,0}\Big ( \big ( \partial_{x}^{\beta}\overline{b}_{\alpha}(\widetilde{x})\big )_{0 \le \alpha \le k-2, \beta \ge 0}\Big )\bigg)\dd\widetilde{x} \right | \lesssim_{L} |x-y|^{\frac{q-1}{m}} \quad \forall \; 2 \le q \le m.
\end{eqnarray}
There is nothing to check for the base case $m=1$. Now assume that \eqref{20241007eq9} holds for some integer $1 \le m \le k-2$.

Define \begin{IEEEeqnarray*}{rCl}
\widetilde{\psi}_{k-2-m}(\xi; x, t)&:=& ix\xi+it\xi^{k}+\psi_{k-2-m}(\xi; x)\\
&:=& ix\xi+it\xi^{k}-\sum_{q=1}^{m}\frac{i}{k\xi^{q}}\int_{0}^{x}P_{k-1-q;k-1-q,0}\Big ( \big (\partial_{x}^{\beta}\overline{b}_{\alpha}(\widetilde{x})\big )_{0 \le \alpha \le k-2, \beta \ge 0} \Big )\dd\widetilde{x}, \\
v(\xi; x, t)&:=&e^{\widetilde{\psi}_{k-2-m}(\xi; x, t)} f\left (\frac{x-x_{1}+k\xi^{k-1}t}{\xi^{m}}\right ), \yesnumber\label{20241007eq6}
\end{IEEEeqnarray*}
 where $x_{1} \in \RR$ is any fixed real number and $f \in C_{c}^{\infty}(\RR)$ is a fixed nontrivial function which is supported in $(-1, 1)$. Then, \begin{eqnarray}\label{20241007eq7}
v_{0}(\xi; x):=v(\xi;x, t=0)=e^{ix\xi}e^{\psi_{k-2-m}(\xi; x)}f\left (\frac{x-x_{1}}{\xi^{m}}\right ).
\end{eqnarray} In addition, by Lemma \ref{20241007lem1} we have
\begin{IEEEeqnarray*}{rCl}
\IEEEeqnarraymulticol{3}{l}{e^{-\widetilde{\psi}_{k-2-m}} L^{\ast}[v]} \\
&=& \left (D_{t}-k\xi^{k-1}D_{x} - \sum_{\ell=-\frac{(m+1)mk}{2}}^{k-2}\sum_{j=0}^{k}\xi^{\ell}P_{k-2-m;\ell, j}\Big(\big(\partial_{x}^{\beta}\overline{b}_{\alpha}(x)\big)_{0 \le \alpha \le k-2, \beta \ge 0}\Big)D_{x}^{j}\right ) \left [ f \left ( \frac{x-x_{1}+k\xi^{k-1}t}{\xi^{m}} \right )\right ] \\
&=& - \sum_{\ell=-\frac{(m+1)mk}{2}}^{k-2}\sum_{j=0}^{k}\xi^{\ell-mj}P_{k-2-m;\ell, j}\Big(\big(\partial_{x}^{\beta}\overline{b}_{\alpha}(x)\big)_{0 \le \alpha \le k-2, \beta \ge 0}\Big)\big(D_{x}^{j}f\big)\left ( \frac{x-x_{1}+k\xi^{k-1}t}{\xi^{m}} \right ) \\
&=& \sum_{\ell=-\frac{(m+3)mk}{2}}^{k-2-m}\sum_{j=0}^{k} \xi^{\ell} e_{k-2-m;\ell, j}(x)\big(\partial_{x}^{j}f\big)\left (\frac{x-x_{1}+k\xi^{k-1}t}{\xi^{m}}\right ),
\end{IEEEeqnarray*}
where $e_{k-2-m;\ell, j}$ are $\overline{C}^{\infty}(\RR)$ functions determined by $L$. (In the last line we have used Lemma \ref{20241007lem1}(ii), which states that $P_{k-2-m;\ell,0}\equiv 0$ for $k-m-1 \le \ell \le k-2$.) From this we can estimate the $L^{2}$-norm of $L^{\ast}v$ as follows.
\begin{IEEEeqnarray*}{rCl}
\|(L^{\ast}v)(\xi;\boldsymbol{\cdot},t)\|_{L^{2}} &=& \left \| e^{\widetilde{\psi}_{k-2-m}}\sum_{\ell=-\frac{(m+3)mk}{2}}^{k-2-m}\sum_{j=0}^{k} \xi^{\ell} e_{k-2-m;\ell, j}(x)\big (\partial_{x}^{j}f\big)\left (\frac{x-x_{1}+k\xi^{k-1}t}{\xi^{m}}\right )\right \|_{L^{2}} \\
& \lesssim &\hspace{-0.1cm}{}_{L}\, \left \| e^{\tRe\big({\psi}_{k-2-m}\big)}\sum_{\ell=-\frac{(m+3)mk}{2}}^{k-2-m}\sum_{j=0}^{k} \xi^{\ell} \big (\partial_{x}^{j}f\big)\left (\frac{x-x_{1}+k\xi^{k-1}t}{\xi^{m}}\right )\right \|_{L^{2}}\\
& \lesssim &\hspace{-0.1cm}{}_{L}\, \left (|\xi|^{-k^{3}}+|\xi|^{k-2-m}\right )\sum_{j=0}^{k} \left \|  e^{\tRe\big({\psi}_{k-2-m}\big)}\big (\partial_{x}^{j}f\big)\left (\frac{x-x_{1}+k\xi^{k-1}t}{\xi^{m}}\right )\right \|_{L^{2}} \yesnumber \label{20241007eq8}
\end{IEEEeqnarray*}

Now let $u(\xi; x, t)$ be a solution to \eqref{20241007eq5} with $u_{0}=v_{0} \in C_{c}^{\infty}(x_{1}-1, x_{1}+1)$. Since \eqref{20241007eq5} is $L^{2}$-wellposed, we have \[
\|u(t)\|_{L^{2}} \le M\|u_{0}\|_{L^{2}} \quad \forall \;|t| \le T,
\] where $M, T$ are positive constants depending only on $L$. Observe that \[
\partial_{t}\langle u, v \rangle = \langle \partial_{t}u, v \rangle + \langle u, \partial_{t}v \rangle = \langle i(D_{t}-L)u, v \rangle + \langle u, iD_{t}v \rangle = \langle u, iL^{\ast}v\rangle
\] Thus for any $|t| \le T$,
\begin{IEEEeqnarray*}{rCl}
\|u_{0}(\xi;\boldsymbol{\cdot})\|_{L^{2}}^{2} &=& \langle u_{0}(\xi;\boldsymbol{\cdot}), v_{0}(\xi;\boldsymbol{\cdot}) \rangle \\
&\le & \|u(\xi;\boldsymbol{\cdot}, t)\|_{L^{2}} \|v(\xi;\boldsymbol{\cdot}, t)\|_{L^{2}} + \left | \int_{0}^{t}\|u(\xi;\boldsymbol{\cdot},\tau)\|_{L^{2}} \|(L^{\ast}v)(\xi;\boldsymbol{\cdot}, \tau)\|_{L^{2}}\dd\tau\right | \\
& \le & M\|u_{0}(\xi;\boldsymbol{\cdot})\|_{L^{2}} \left ( \|v(\xi;\boldsymbol{\cdot}, t)\|_{L^{2}} + \left | \int_{0}^{t}\|(L^{\ast}v)(\xi;\boldsymbol{\cdot}, \tau)\|_{L^{2}}\dd\tau\right |\right )
\end{IEEEeqnarray*} 
Hence, \[
\|u_{0}(\xi;\boldsymbol{\cdot})\|_{L^{2}} \lesssim_{L}\|v(\xi;\boldsymbol{\cdot},t)\|_{L^{2}} + \left | \int_{0}^{t} \|(L^{\ast}v)(\xi;\boldsymbol{\cdot},\tau)\|_{L^{2}}\dd\tau \right | \quad \forall \;|t| \le T.
\] Plugging \eqref{20241007eq6}, \eqref{20241007eq7}, and \eqref{20241007eq8} into this equation gives
\begin{IEEEeqnarray*}{rCl}
\IEEEeqnarraymulticol{3}{l}{\left \| e^{\tRe\big(\psi_{k-2-m}\big)}f\left ( \frac{x-x_{1}}{\xi^m} \right )\right \|_{L^{2}}} \\
&\lesssim&\hspace{-0.1cm}{}_{L} \left \| e^{\tRe\big(\psi_{k-2-m}\big)} f \left ( \frac{x-x_{1}+k\xi^{k-1}t}{\xi^{m}}\right )\right \|_{L^{2}} \\
&& \quad + \left |\int_{0}^{t} \left (|\xi|^{-k^{3}}+|\xi|^{k-2-m}\right ) \sum_{j=0}^{k} \left \|e^{\tRe\big(\psi_{k-2-m}\big)} \big (\partial_{x}^{j}f\big ) \left ( \frac{x-x_{1}+k\xi^{k-1}\tau}{\xi^{m}}\right )\right \|_{L^{2}}\dd\tau \right | \yesnumber \quad \forall \; |t| \le T.\label{20241007eq10}
\end{IEEEeqnarray*}
Meanwhile, \eqref{20241007eq9} implies that
\begin{IEEEeqnarray*}{rCl}
\IEEEeqnarraymulticol{3}{l}{\left | \tRe\big(\psi_{k-2-m}(\xi; x)\big )-\tRe\big (\psi_{k-2-m}(\xi; y)\big ) \right |} \\
&=& \left | \sum_{q=1}^{m}\frac{1}{k\xi^{q}} \int_{x}^{y}\tIm\bigg(P_{k-1-q;k-1-q,0}\Big ( \big (\partial_{x}^{\beta}\overline{b}_{\alpha}(\widetilde{x})\big )_{0 \le \alpha \le k-2, \beta \ge 0} \Big )\bigg)\dd\widetilde{x}\right | \\
& \lesssim&\hspace{-0.1cm}{}_{L}\, \sum_{q=1}^{m-1}\frac{1}{k|\xi|^{q}}|x-y|^{\frac{q}{m}} + \frac{1}{k|\xi|^{m}}|x-y|.
\end{IEEEeqnarray*} Hence if $|x-y|\le |\xi|^{m}$, then \[
\left | \tRe\big(\psi_{k-2-m}(\xi; x)\big )-\tRe\big (\psi_{k-2-m}(\xi; y)\big ) \right | \lesssim_{L} 1.
\] Using this with the fact $f \in C_{c}^{\infty}(-1, 1)$, it is easy to see that the two $L^{2}$-norms \[
\left \| e^{\tRe\big(\psi_{k-2-m}\big)}f\left ( \frac{x-x_{1}}{\xi^m} \right )\right \|_{L^{2}} \sim_{L} \left \| e^{\tRe\big(\psi_{k-2-m}(\xi;x_{1})\big)}f\left ( \frac{x-x_{1}}{\xi^m} \right )\right \|_{L^{2}}
\] are comparable up to a constant depending only on $L$. Using similar argument simplifies \eqref{20241007eq10} into
\begin{IEEEeqnarray*}{rCl}
\IEEEeqnarraymulticol{3}{l}{e^{\tRe\big(\psi_{k-2-m}(\xi; x_{1})\big)} \left \| f\left ( \frac{x-x_{1}}{\xi^m} \right )\right \|_{L^{2}}} \\
&\lesssim&\hspace{-0.1cm}{}_{L}\, e^{\tRe\big(\psi_{k-2-m}(\xi; x_{1}-k\xi^{k-1}t)\big)}\left \|  f \left ( \frac{x-x_{1}+k\xi^{k-1}t}{\xi^{m}}\right )\right \|_{L^{2}} \\
&& \quad + \left |\int_{0}^{t} \left (|\xi|^{-k^{3}}+|\xi|^{k-2-m}\right ) \sum_{j=0}^{k} e^{\tRe\big(\psi_{k-2-m}(\xi; x_{1}-k\xi^{k-1}\tau)\big )} \left \| \big ( \partial_{x}^{j}f \big ) \left ( \frac{x-x_{1}+k\xi^{k-1}\tau}{\xi^{m}}\right )\right \|_{L^{2}}\dd\tau \right |,\\
\IEEEeqnarraymulticol{3}{l}{e^{\tRe\big(\psi_{k-2-m}(\xi; x_{1})\big)} |\xi|^{\frac{m}{2}}\|f\|_{L^{2}}} \\
&\lesssim&\hspace{-0.1cm}{}_{L}\, e^{\tRe\big(\psi_{k-2-m}(\xi; x_{1}-k\xi^{k-1}t)\big)}|\xi|^{\frac{m}{2}}\|f\|_{L^{2}} \\
&& \quad + \left |\int_{0}^{t} \left (|\xi|^{-k^{3}}+|\xi|^{k-2-m}\right ) \sum_{j=0}^{k} e^{\tRe\big(\psi_{k-2-m}(\xi; x_{1}-k\xi^{k-1}\tau)\big )} |\xi|^{\frac{m}{2}}\|\partial_{x}^{j}f\|_{L^{2}}\dd\tau \right | \quad \forall \; |t| \le T,
\end{IEEEeqnarray*}
and since $f \in C_{c}^{\infty}(\RR)$ was a fixed nontrivial function, we obtain \begin{IEEEeqnarray*}{rCl}
\IEEEeqnarraymulticol{3}{l}{e^{\tRe\big(\psi_{k-2-m}(\xi; x_{1})\big)} } \\
&\lesssim&\hspace{-0.1cm}{}_{L}\, e^{\tRe\big(\psi_{k-2-m}(\xi; x_{1}-k\xi^{k-1}t)\big)} +  \big(|\xi|^{-k^{3}}+|\xi|^{k-2-m}\big)\left |\int_{0}^{t} e^{\tRe\big(\psi_{k-2-m}(\xi; x_{1}-k\xi^{k-1}\tau)\big)} \dd\tau \right | \quad \forall \; |t| \le T. \yesnumber\label{20241007eq11}
\end{IEEEeqnarray*}
Now fix arbitrary real numbers $y_{0} \neq y_{1}$. Choose $t$ so that $y_{1}-k\xi^{k-1}t=y_{0}$. Note that the condition $|t| \le T$ is satisfied if $|\xi| \gg_{L} |y_{0}-y_{1}|^{\frac{1}{k-1}}$. Assume that this condition is true. Choose $t_{0} \in [0, t]$ (or $[t, 0]$ if $t<0$) so that $\tRe\big(\psi_{k-2-m}(\xi; y_{1}-k\xi^{k-1}t_{0})\big)$ is maximized. Recall that in \eqref{20241007eq11}, $x_{1}$ was chosen arbitrarily. So by using \eqref{20241007eq11} for $y_{1}-k\xi^{k-1}t_{0}$ instead of $x_{1}$ and $t-t_{0}$ instead of $t$, one gets
\begin{IEEEeqnarray*}{rCl}
\IEEEeqnarraymulticol{3}{l}{e^{\tRe\big(\psi_{k-2-m}(\xi; y_{1}-k\xi^{k-1}t_{0})\big)}}\\
&\lesssim&\hspace{-0.1cm}{}_{L}\, e^{\tRe\big(\psi_{k-2-m}(\xi; y_{0})\big)} + \big(|\xi|^{-k^{3}}+|\xi|^{k-2-m}\big) \left |\int_{0}^{t-t_{0}}  e^{\tRe\big(\psi_{k-2-m}(\xi; y_{1}-k\xi^{k-1}(t_{0}+\tau))\big)} \dd\tau \right | \\
&\le&  e^{\tRe\big(\psi_{k-2-m}(\xi; y_{0})\big)} + |t| \big(|\xi|^{-k^{3}}+|\xi|^{k-2-m}\big)  e^{\tRe\big(\psi_{k-2-m}(\xi; y_{1}-k\xi^{k-1}t_{0})\big)} \\
&=&  e^{\tRe\big(\psi_{k-2-m}(\xi; y_{0})\big)} + \frac{|y_{1}-y_{0}|}{k|\xi|^{k-1}} \big(|\xi|^{-k^{3}}+|\xi|^{k-2-m}\big) e^{\tRe\big(\psi_{k-2-m}(\xi; y_{1}-k\xi^{k-1}t_{0})\big)}
\end{IEEEeqnarray*}
Hence if $|\xi| \gg_{L} \max\left \{|y_{0}-y_{1}|^{\frac{1}{m+1}}, |y_{0}-y_{1}|^{\frac{1}{k-1}}, 1\right \}$, we have \[
e^{\tRe\big(\psi_{k-2-m}(\xi; y_{1})\big)} \le e^{\tRe\big(\psi_{k-2-m}(\xi; y_{1}-k\xi^{k-1}t_{0})\big)}\lesssim_{L}e^{\tRe\big(\psi_{k-2-m}(\xi; y_{0})\big)}.
\] 
Since this holds for arbitrary $y_{0}$ and $y_{1}$, we can switch $y_{0}$ and $y_{1}$ to get the reversed inequality. Combining the two gives \[
\left | \tRe\big(\psi_{k-2-m}(\xi;y_{1})\big)-\tRe\big(\psi_{k-2-m}(\xi;y_{0})\big) \right | \lesssim_{L} 1,
\] that is,\[
\left |\sum_{q=1}^{m}\frac{1}{k\xi^{q}}\int_{y_{0}}^{y_{1}} \tIm\bigg(P_{k-1-q;k-1-q,0}\Big ( \big (\partial_{x}^{\beta}\overline{b}_{\alpha}(\widetilde{x})\big )_{0 \le \alpha \le k-2, \beta \ge 0} \Big )\bigg)\dd\widetilde{x} \right |\lesssim_{L} 1,
\] for any $|\xi| \ge c_{L} \max\left \{|y_{0}-y_{1}|^{\frac{1}{m+1}}, |y_{0}-y_{1}|^{\frac{1}{k-1}}, 1\right \}$ where $c=c_{L}$ is a constant depending only on $L$. Thus if $|y_{0}-y_{1}|>1$, taking $|\xi| = c|y_{0}-y_{1}|^{\frac{1}{m+1}}$ gives \[
\left | \sum_{q=1}^{m} \frac{1}{c^{q}|y_{0}-y_{1}|^{\frac{q}{m+1}}}\int_{y_{0}}^{y_{1}} \tIm\bigg(P_{k-1-q;k-1-q,0}\Big ( \big (\partial_{x}^{\beta}\overline{b}_{\alpha}(\widetilde{x})\big )_{0 \le \alpha \le k-2, \beta \ge 0} \Big )\bigg)\dd\widetilde{x} \right | \lesssim_{L} 1.
\] Now proceeding as in the proof of \cite[Proposition 2.7]{Tarama11} gives \[
\left | \frac{1}{|y_{0}-y_{1}|^{\frac{q}{m+1}}}\int_{y_{0}}^{y_{1}} \tIm\bigg(P_{k-1-q;k-1-q,0}\Big ( \big (\partial_{x}^{\beta}\overline{b}_{\alpha}(\widetilde{x})\big )_{0 \le \alpha \le k-2, \beta \ge 0} \Big )\bigg)\dd\widetilde{x} \right | \lesssim_{L} 1 \quad \forall \; |y_{0}-y_{1}|>1, \; \forall \; 1 \le q \le m.
\] If $|y_{0}-y_{1}|\le 1$, using supremum bound gives \[
\left | \frac{1}{|y_{0}-y_{1}|^{\frac{q}{m+1}}}\int_{y_{0}}^{y_{1}} \tIm\bigg(P_{k-1-q;k-1-q,0}\Big ( \big (\partial_{x}^{\beta}\overline{b}_{\alpha}(\widetilde{x})\big )_{0 \le \alpha \le k-2, \beta \ge 0} \Big )\bigg)\dd\widetilde{x} \right | \lesssim_{L} |y_{0}-y_{1}|^{1-\frac{q}{m+1}} \le 1 \quad \forall \; 1 \le q \le m.
\] Combining these two cases proves \eqref{20241007eq9} with $m+1$ instead of $m$. This completes the inductive step. Using \eqref{20241007eq9} for $m=k-1$ gives the desired conclusion.
\end{proof}

Finally, we can prove Theorem \ref{20241015thm1}.

\begin{proof}[Proof of Theorem \ref{20241015thm1}]
Let $A=\sum_{j=0}^{k-2}c_{j}(x)D_{x}^{j}$. Since $A$ is self-adjoint, $L_{0}u=0$ is $L^{2}$-wellposed, where $L_{0}=D_{t}-D_{x}^{k}-A=D_{t}-D_{x}^{k}-\sum_{j=0}^{k-2}c_{j}(x)D_{x}^{j}$. (See Appendix \ref{appA}.) By assumption, $Lu=0$ with $L=D_{t}-D_{x}^{k}-A-\sum_{j=0}^{m}b_{j}(x)D_{x}^{j} =: D_{t}-D_{x}^{k}-\sum_{j=0}^{k-2}\widetilde{c}_{j}(x)D_{x}^{j}$ is also $L^{2}$-wellposed. By Proposition \ref{20241008prop1} applied for $L_{0}$ and $L$, we have \begin{IEEEeqnarray*}{rCl}
\left | \int_{x}^{y}\tIm \bigg ( P_{m;m,0}\Big(\big(\partial_{x}^{\beta}\overline{c}_{\alpha}(\widetilde{x})\big)_{0 \le \alpha \le k-2, \beta \ge 0} \Big ) \bigg ) \dd\widetilde{x} \right | &\lesssim&\hspace{-0.1cm}{}_{L}\, |x-y|^{\frac{k-1-m}{k-1}}, \yesnumber \label{20241008eq6}\\
\left | \int_{x}^{y}\tIm \bigg ( P_{m;m,0}\Big(\big(\partial_{x}^{\beta}\overline{\widetilde{c}}_{\alpha}(\widetilde{x})\big)_{0 \le \alpha \le k-2, \beta \ge 0} \Big ) \bigg ) \dd\widetilde{x} \right | &\lesssim&\hspace{-0.1cm}{}_{L}\, |x-y|^{\frac{k-1-m}{k-1}}. \yesnumber \label{20241008eq7}
\end{IEEEeqnarray*}
By Lemma \ref{20241007lem1}(iii), we have $P:=P_{m;m,0}\Big ( \big \{ x_{\alpha}^{(\beta)}\big \}_{0 \le \alpha \le k-2, \beta \ge 0}\Big )-x_{m}^{(0)} \in \cP_{m+1, k-2}$. But note that $c_{\alpha}=\widetilde{c}_{\alpha}$ for $m+1 \le \alpha \le k-2$. Hence, the value of $P$ evaluated at $\big (\partial_{x}^{\beta}\overline{c}_{\alpha}(x)\big )_{0 \le \alpha \le k-2, \beta \ge 0}$ and at $\big ( \partial_{x}^{\beta}\overline{\widetilde{c}}_{\alpha}(x)\big )_{0 \le \alpha \le k-2, \beta \ge 0}$ are equal. Thus, \[
P_{m;m,0}\Big(\big(\partial_{x}^{\beta}\overline{c}_{\alpha}(x)\big)_{0 \le \alpha \le k-2, \beta \ge 0} \Big )-P_{m;m,0}\Big(\big(\partial_{x}^{\beta}\overline{\widetilde{c}}_{\alpha}(x)\big)_{0 \le \alpha \le k-2, \beta \ge 0} \Big )=\overline{c}_{m}(x)-\overline{\widetilde{c}}_{m}(x)=-\overline{b}_{m}(x),
\] so taking the difference between \eqref{20241008eq6} and \eqref{20241008eq7} finally gives \[
\left | \int_{x}^{y} \tIm \big ( b_{m}(\widetilde{x}) \big )\dd \widetilde{x}\right | \lesssim_{L} |x-y|^{\frac{k-1-m}{k-1}}.
\]
\end{proof}

\section{Proof of Theorem \ref{20241015thm2} (Sufficiency in $k=5$)}\label{sec3}

In this section, we prove Theorem \ref{20241015thm2}.

\begin{proof}[Proof of Theorem \ref{20241015thm2}]
If the Cauchy problem \eqref{20241015eq2} is $L^{2}$-wellposed, then by Theorem \ref{20241015thm1} it is necessary that
\begin{eqnarray}\label{20241008eq1}
\left | \int_{x}^{y} \tIm \big(b(\widetilde{x})\big)\dd\widetilde{x} \right | \lesssim_{L} |x-y|^{\frac{1}{4}}.
\end{eqnarray}
From now on, assume that this estimate hold. Define two $\Psi$DOs
\begin{IEEEeqnarray*}{rCl}
\Phi_{0, 1}(\ell; \xi, x)&:=&\frac{1}{5}\left ( \frac{1}{\langle \xi\rangle_{\ell}}+\frac{\ell^{2}}{2\langle \xi \rangle_{\ell}^{3}}\right )\int_{-\infty}^{x}\chi\left ( \frac{y-x}{\langle \xi\rangle_{\ell}^{4}} \right )\tIm \big(b(y)\big)\dd y, \\
\Phi_{1}(\ell;\xi, x)&:=&e^{\Phi_{0, 1}}\bigg ( 1+ \frac{1}{5\langle\xi\rangle_{\ell}} \partial_{\xi}\Phi_{0, 1}\tRe\big(b(x)\big) +\frac{1}{5\langle\xi\rangle_{\ell}^{2}}\partial_{\xi}\Phi_{0,1}\left (c(x)+2ib'(x)\right ) \\
&& \qquad \quad -\frac{1}{10\langle\xi\rangle_{\ell}}\left ( \partial_{\xi}^{2}\Phi + \big(\partial_{\xi}\Phi\big)^{2} \right )i \tRe\big(b'(x)\big)\bigg ) \yesnumber\label{20241015eq1}
\end{IEEEeqnarray*}
where $\langle \xi \rangle_{\ell}:=\sqrt{\xi^{2}+\ell^{2}}$ and $\chi \in C_{c}^{\infty}(-2, 2)$ is a fixed function satisfying $\chi(x) =1 \;  \forall \; |x| \le 1$. We will use one lemma in \cite{Mizuhara06}.
\begin{lem}\cite[Lemma 2.1]{Mizuhara06} \label{20241009lem1}
Let $0 \le p < 1 < q$. Assume that $h(x) \in \overline{C}^{\infty}(\RR)$ satisfies \[
\left | \int_{x}^{y}h(\widetilde{x})\dd\widetilde{x} \right | \lesssim |x-y|^{p}.
\] Define $H(x, \xi;\ell)$ by \[
H(x, \xi;\ell) = \int_{-\infty}^{x} \chi \left ( \frac{y-x}{\langle \xi \rangle_{\ell}^{q}}\right )h(y)\dd y,
\] where $\chi \in C_{c}^{\infty}(-2, 2)$ is a fixed function satisfying $\chi(x)=1 \; \forall \; |x| \le 1$. Then, $H \in S_{(\ell)}^{pq}$, and $\partial_{x}H - h \in S_{(\ell)}^{pq-q}$.
\end{lem}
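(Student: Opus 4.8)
The plan is to reduce the statement to a single uniform symbol estimate for an auxiliary function of $x$ and the scale $\Lambda := \langle\xi\rangle_{\ell}^{q}$, and then transfer that estimate back to the $\xi$-variable by the chain rule, using the elementary fact that $\langle\xi\rangle_{\ell}^{q}\in S^{q}_{(\ell)}$, i.e.\ $|\partial_{\xi}^{a}\langle\xi\rangle_{\ell}^{q}|\lesssim_{a}\langle\xi\rangle_{\ell}^{q-a}$. Set $\mathcal{H}(x):=\int_{0}^{x}h(\widetilde{x})\,d\widetilde{x}$, so $\mathcal{H}'=h$ and the hypothesis reads $|\mathcal{H}(x)-\mathcal{H}(y)|\lesssim|x-y|^{p}$. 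Substituting $y=x+\Lambda z$ in the defining integral and integrating by parts once in $z$ --- the boundary contribution at $z=0$ is $\chi(0)\mathcal{H}(x)=\mathcal{H}(x)$, which, since $\int_{-\infty}^{0}\chi'(z)\,dz=\chi(0)=1$, may be absorbed --- yields
\[
H(x,\xi;\ell)=-\int_{-\infty}^{0}\chi'(z)\big[\mathcal{H}(x+\Lambda z)-\mathcal{H}(x)\big]\,dz=:-G(x,\Lambda).
\]
Because $\chi\equiv1$ on $[-1,1]$, the function $\chi'$ vanishes there, so the integrand is supported in $z\in[-2,-1]$, where $|\mathcal{H}(x+\Lambda z)-\mathcal{H}(x)|\lesssim(2\Lambda)^{p}\lesssim\Lambda^{p}$; in particular $|H|\lesssim\Lambda^{p}=\langle\xi\rangle_{\ell}^{pq}$.

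The crux is the uniform estimate $|\partial_{x}^{\alpha}\partial_{\Lambda}^{\gamma}G(x,\Lambda)|\lesssim_{\alpha,\gamma}\Lambda^{p-\gamma}$ for all $\alpha,\gamma\ge0$ and $\Lambda\ge1$, which I would prove by repeated integration by parts in $z$. Differentiating $G$ in $x$ and $\Lambda$ produces, up to harmless polynomial factors $z^{\gamma}$, integrals of $\chi'(z)$ against $h^{(\gamma+\alpha-1)}(x+\Lambda z)$ when $\gamma\ge1$ (the constant $\mathcal{H}(x)$ is killed by $\partial_{\Lambda}$), plus, when $\gamma=0$, an additional bounded term $h^{(\alpha-1)}(x)$. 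Using $h^{(j)}(x+\Lambda z)=\Lambda^{-1}\partial_{z}\big[h^{(j-1)}(x+\Lambda z)\big]$, each integration by parts costs a factor $\Lambda^{-1}$ and replaces the kernel by a $z$-derivative of it; since every kernel generated along the way is smooth, supported in $[-2,-1]$, and vanishes identically near $z=0$ (each is a $z$-derivative of something proportional to $\chi'$), all boundary terms vanish. After enough steps only $\mathcal{H}$ itself remains; one final integration by parts, combined with the fact that the resulting kernel is a total $z$-derivative of a compactly supported function and hence has vanishing integral over $(-\infty,0)$, allows us to replace $\mathcal{H}(x+\Lambda z)$ by $\mathcal{H}(x+\Lambda z)-\mathcal{H}(x)$ and invoke the $\Lambda^{p}$ bound on the support. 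Counting the $\Lambda^{-1}$ factors gives $\Lambda^{p-\gamma}$ (in fact $\Lambda^{p-\gamma-\alpha}$ for $\alpha\ge1$, and $\lesssim1\le\Lambda^{p}$ when $\gamma=0,\ \alpha\ge1$, using $p\ge0$).

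Granting this estimate, $H\in S^{pq}_{(\ell)}$ follows by the chain rule (Fa\`a di Bruno): for $\beta\ge1$, $\partial_{x}^{\alpha}\partial_{\xi}^{\beta}H$ is a finite sum of terms $(\partial_{x}^{\alpha}\partial_{\Lambda}^{\gamma}G)(x,\langle\xi\rangle_{\ell}^{q})$ times products of $\gamma$ derivatives $\partial_{\xi}^{a_{i}}\langle\xi\rangle_{\ell}^{q}$ with $\sum_{i}a_{i}=\beta$, and each such term is $\lesssim\langle\xi\rangle_{\ell}^{q(p-\gamma)}\langle\xi\rangle_{\ell}^{q\gamma-\beta}=\langle\xi\rangle_{\ell}^{pq-\beta}$; the case $\beta=0$ is the estimate itself. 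For the second assertion, differentiating under the integral and using the fundamental theorem of calculus on the upper endpoint gives $\partial_{x}H(x,\xi;\ell)=h(x)-\Lambda^{-1}\int_{-\infty}^{x}\chi'\!\big(\tfrac{y-x}{\Lambda}\big)h(y)\,dy$; running the same manipulation once more (now using $\chi'(0)=0$ and $\int_{-\infty}^{0}\chi''(z)\,dz=\chi'(0)=0$) rewrites this as
\[
\partial_{x}H(x,\xi;\ell)-h(x)=\Lambda^{-1}\int_{-\infty}^{0}\chi''(z)\big[\mathcal{H}(x+\Lambda z)-\mathcal{H}(x)\big]\,dz.
\]
The integral here has exactly the structure of $G$ with $\chi''$ in place of $\chi'$ (which shares every property used above), hence defines a symbol in $S^{pq}_{(\ell)}$; multiplying by $\Lambda^{-1}=\langle\xi\rangle_{\ell}^{-q}\in S^{-q}_{(\ell)}$ places $\partial_{x}H-h$ in $S^{pq-q}_{(\ell)}$.

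The main obstacle is entirely bookkeeping: arranging the repeated integration by parts so that (i) every boundary term at $z=0$ (and at $z=\pm2$) genuinely vanishes and (ii) at the final step the kernel has vanishing mean over $(-\infty,0)$, which is what makes the subtraction of the constant $\mathcal{H}(x)$ --- the step that upgrades the crude bound to the gain $\Lambda^{p}$ --- legitimate. Both points are dispatched by the single observation that $\chi'$ vanishes identically near $0$, so every kernel generated along the way vanishes identically near $0$ and is a total $z$-derivative of a compactly supported function; once this is noted, the remaining estimates are routine.
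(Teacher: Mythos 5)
Your proposal is correct, and it is worth noting that the paper itself does not prove this lemma at all: it is quoted verbatim from Mizuhara \cite[Lemma 2.1]{Mizuhara06}, so there is no internal proof to compare against. Your argument is a legitimate self-contained proof along the standard lines for such Tarama-type symbols: the identity $H=-\int_{-\infty}^{0}\chi'(z)\big[\mathcal{H}(x+\Lambda z)-\mathcal{H}(x)\big]\dd z$ (using $\chi(0)=1$ and $\int_{-\infty}^{0}\chi'(z)\dd z=1$) is right, the support of $\chi'$ in $[-2,-1]$ gives the $\Lambda^{p}$ gain from the H\"older-type hypothesis, and the repeated integration by parts correctly converts each $\Lambda$- or $x$-derivative into a factor $\Lambda^{-1}$, with all boundary terms vanishing because every kernel generated is a $z$-derivative of a function vanishing identically near $0$ and of compact support; the final zero-mean observation that legitimizes subtracting $\mathcal{H}(x)$ is exactly the right point, and you correctly isolate the one exceptional case $\gamma=0$, $\alpha\ge 1$, where the kernel $\chi'$ does not have zero mean but the crude $O(1)$ bound already suffices since only $\Lambda^{p}$ with $p\ge0$ is needed there. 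The transfer to the $\xi$-variable via Fa\`a di Bruno with $|\partial_{\xi}^{a}\langle\xi\rangle_{\ell}^{q}|\lesssim_{a}\langle\xi\rangle_{\ell}^{q-a}$, and the treatment of $\partial_{x}H-h$ as $\langle\xi\rangle_{\ell}^{-q}$ times a symbol of the same $G$-type built on $\chi''$ (using $\chi'(0)=0$ and $\int_{-\infty}^{0}\chi''=0$), both check out. When writing this up, make explicit that all estimates are uniform in $\ell\ge1$ (which is what $\Lambda=\langle\xi\rangle_{\ell}^{q}\ge1$ and the $\ell$-uniformity of the bounds on $\partial_{\xi}^{a}\langle\xi\rangle_{\ell}^{q}$ provide), since that uniformity is part of the definition of $S_{(\ell)}^{m}$; the hypotheses $p<1<q$ are not actually needed for the estimates themselves and are only relevant for how the lemma is used.
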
 Note that $\ell^{n}\langle \xi\rangle_{\ell}^{-m} \in S_{(\ell)}^{n-m}$ for any integers $n, m$. Using this with \eqref{20241008eq1} and Lemma \ref{20241009lem1} shows that \begin{eqnarray} \label{20241015eq4}
\Phi_{0, 1} \in S_{(\ell)}^{0}, \quad \partial_{x}\Phi_{0, 1} - \frac{1}{5}\left ( \frac{1}{\langle \xi \rangle_{\ell}} + \frac{\ell^{2}}{2\langle \xi \rangle_{\ell}^{3}} \right ) \tIm \big ( b(x)\big ) \in S_{(\ell)}^{-4}, \quad \Phi_{1} \in S_{(\ell)}^{0}
\end{eqnarray} Now define \begin{IEEEeqnarray*}{rCl}
L_{(1)}&=&D_{t}-D_{x}^{5} - \tRe\big(b(x)\big)D_{x}^{3} -\left ( c(x) - 2\tIm \big (b'(x) \big ) \right ) D_{x}^{2} \\
&& - \left ( d(x) + 2i\tIm\big(b''(x)\big) - \frac{2}{5}\Big ( \tIm \big (b(x) \big ) \Big )^{2} - \frac{3}{5}ib(x)\tIm \big (b(x) \big ) + \frac{1}{5}i\tRe\big(b(x)\big ) \tIm \big (b(x) \big ) \right ) D_{x} \yesnumber \label{20241015eq3}
\end{IEEEeqnarray*}
Then by using \eqref{20241015eq4}, one can calculate that $L \circ \Phi_{1} \equiv \Phi_{1} \circ L_{(1)} \pmod{S^{0}}$ for any $\ell \ge 1$. The verification of this calculation is done in Appendix \ref{subsubappB.1.1}. Observe from the definition of $\Phi_{1}$ that if we define $\Psi_{1} := e^{-\Phi_{0,1}}$, then one has $\Psi_{1} \in S_{(\ell)}^{0}$, $\Phi_{1} \circ \Psi_{1} - \mathrm{Id} \in S_{(\ell)}^{-1}$, and $\Psi_{1} \circ \Phi_{1} - \mathrm{Id} \in S_{(\ell)}^{-1}$. Thus if $\ell \ge 1$ is sufficiently large, both $\Phi_{1}\circ\Psi_{1}$ and $\Psi_{1}\circ\Phi_{1}$ have their inverses in $S^{0}$ and thus $\Phi_{1}$ has its inverse in $S^{0}$. Fix such $\ell$, so that both $\Phi_{1}$ and $\Phi_{1}^{-1}$ are $\Psi$DO in $S^{0}$. Now define $C \in S^{0}$ by $C=L \circ \Phi_{1} - \Phi_{1} \circ L_{(1)}$. If the Cauchy problem $Lu=0$ is $L^{2}$-wellposed, then for any $T>0$ and $s \in\RR$ the estimate \[
\|u(t)\|_{H^{s}} \lesssim_{T, s} \|u(0)\|_{H^{s}}+\int_{0}^{t}\|Lu(\tau)\|_{H^{s}}\dd\tau \quad \forall \; |t| \le T
\] and the time-reversed estimate for $L^{\ast}$ hold. Plugging $u=\Phi_{1}v$ gives \[
\|v\|_{H^{s}} \lesssim_{T, s} \|v(0)\|_{H^{s}} + \int_{0}^{t}\|L_{(1)}v(\tau)\|_{H^{s}}\dd\tau + \int_{0}^{t}\|v(\tau)\|_{H^{s}}\dd\tau,
\] using that $\Phi, \Phi^{-1}, C$ are $H^{s}$-bounded operator. Now Gr\"onwall's inequality gives \[
|v\|_{H^{s}} \lesssim_{T, s}\|v(0)\|_{H^{s}} + \int_{0}^{t}\|L_{(1)}v(\tau)\|_{H^{s}}\dd\tau.
\] Similar argument for $L^{\ast}$ and $\big(\Phi_{1}^{\ast}\big)^{-1}$ gives time-reversed estimate for $L_{(1)}^{\ast}$. These prove the $L^{2}$-wellposedness of the Cauchy problem $L_{(1)}v=0$. Reversing $L, L_{(1)}$ and replacing $\Phi_{1}$ by $\Phi_{1}^{-1}$ allows us to conclude that the $L^{2}$-wellposedness for $Lu=0$ and $L_{(1)}v=0$ are equivalent. Hence the problem boils down into the $L^{2}$-wellposedness of the Cauchy problem $L_{(1)}u=0$, where 
\begin{IEEEeqnarray*}{rCl}
L_{(1)}&=&D_{t}-D_{x}^{5} - \tRe\big(b(x)\big)D_{x}^{3} -\left ( c(x) - 2\tIm \big (b'(x) \big ) \right ) D_{x}^{2} \\
&& - \left ( d(x) + 2i\tIm\big(b''(x)\big) - \frac{2}{5}\Big ( \tIm \big (b(x) \big ) \Big )^{2} - \frac{3}{5}ib(x)\tIm \big (b(x) \big ) + \frac{1}{5}i\tRe\big(b(x)\big ) \tIm \big (b(x) \big ) \right ) D_{x} \\
&=& D_{t} - D_{x}^{5} - \left ( \tRe\big(b(x)\big)D_{x}^{3}-\frac{3}{2}i\tRe\big(b'(x)\big)D_{x}^{2}\right ) -\left ( c(x) - 2\tIm \big (b'(x) \big ) + \frac{3}{2}i\tRe\big(b'(x)\big) \right ) D_{x}^{2}\\
&& - \left ( d(x) + 2i\tIm\big(b''(x)\big) - \frac{2}{5}\Big ( \tIm \big (b(x) \big ) \Big )^{2} - \frac{3}{5}ib(x)\tIm \big (b(x) \big ) + \frac{1}{5}i\tRe\big(b(x)\big ) \tIm \big (b(x) \big ) \right ) D_{x} \\
&=:& D_{t}-D_{x}^{5}-A_{(1)}-c_{(1)}(x)D_{x}^{2}-d_{(1)}(x)D_{x}. \yesnumber\label{20241015eq5}
\end{IEEEeqnarray*}
Here, $A_{(1)}$ is a self-adjoint differential operator of order $3$. For $L_{(1)}u=0$ to be $L^{2}$-wellposed, it is necessary that 
\begin{eqnarray}\label{20241010eq1}
\left | \int_{x}^{y}\tIm \big (c_{(1)}(\widetilde{x})\big)\dd\widetilde{x} \right | \lesssim_{L} |x-y|^{\frac{1}{2}}.
\end{eqnarray}
by Theorem \ref{20241015thm1}. Assume that this condition holds. Define two $\Psi$DOs
\begin{IEEEeqnarray*}{rCl}
\Phi_{0, 2}(\ell;\xi, x)&:=&\frac{1}{5\langle\xi\rangle_{\ell}^{2}} \int_{-\infty}^{x}\chi\left ( \frac{y-x}{\langle \xi \rangle_{\ell}^{4}} \right )\tIm \big (c_{(1)}(y)\big)\dd y, \\
\Phi_{2}(\ell;\xi, x)&:=&e^{\Phi_{0, 2}}\bigg ( 1 + \frac{1}{5\langle \xi\rangle_{\ell}}\tRe\big(b(x)\big)\partial_{\xi}\Phi_{0,2} + \frac{1}{10\langle\xi\rangle_{\ell}^{2}}\left (2c_{(1)}(x)+i\tRe\big(b'(x)\big)\right )\partial_{\xi}\Phi_{0,2} \\
&& \qquad  \quad - i \partial_{x}\Phi_{0,2}\partial_{\xi}\Phi_{0,2}  - \frac{1}{10\langle\xi\rangle_{\ell}}i\tRe\big(b'(x)\big)\Big ( \partial_{\xi}^{2}\Phi_{0,2}+\big(\partial_{\xi}\Phi_{0,2}\big)^{2}\Big)\bigg ). \yesnumber\label{20241015eq6}
\end{IEEEeqnarray*}
Then by \eqref{20241010eq1} and Lemma \ref{20241009lem1}, we have \begin{eqnarray}\label{20241015eq8}
\Phi_{0, 2} \in S_{(\ell)}^{0}, \quad \partial_{x}\Phi_{0,2}-\frac{1}{5\langle \xi \rangle_{\ell}^{2}}\tIm \big (c_{(1)}(x) \big ) \in S_{(\ell)}^{-4}, \quad \Phi_{2} \in S_{\ell}^{(0)}.
\end{eqnarray} Now define \begin{eqnarray}\label{20241015eq7}
L_{(2)} = D_{t} - D_{x}^{5} - A_{(1)} - \tRe\big(c_{(1)}(x)\big)D_{x}^{2}-\left ( d_{(1)}(x)-2\tIm\big(c_{(1)}'(x)\big ) \right )D_{x}.
\end{eqnarray} Then by using \eqref{20241015eq8}, one can check that $L_{(1)} \circ \Phi_{2} \equiv \Phi_{2} \circ L_{(2)} \pmod{S^{0}}$, for any $\ell \ge 1$. This calculation is postponed in Appendix \ref{subsubappB.1.2}. Arguing as before shows that $L^{2}$-wellposedness for Cauchy problem $L_{(1)}u=0$ and that for $L_{(2)}u=0$ are equivalent. Now we study $L_{(2)}u=0$, where
\begin{IEEEeqnarray*}{rCl}
L_{(2)} &=& D_{t} - D_{x}^{5} - A_{(1)} - \tRe\big(c_{(1)}(x)\big)D_{x}^{2}-\left ( d_{(1)}(x)-2\tIm \big (c_{(1)}'(x)\big)\right )D_{x} \\
&=& D_{t} - D_{x}^{5} - \left ( A_{(1)} + \tRe\big(c_{(1)}(x)\big ) D_{x}^{2} - i\tRe\big(c_{(1)}'(x)\big )D_{x} \right ) - \left ( d_{(1)}(x)-2\tIm \big (c_{(1)}'(x)\big) + i\tRe\big(c_{(1)}'(x) \big )\right )D_{x} \\
&=:& D_{t}-D_{x}^{5}-A_{(2)}-d_{(2)}(x)D_{x}. \yesnumber\label{20241015eq9}
\end{IEEEeqnarray*}
Here, $A_{(2)}$ is a self-adjoint differential operator of order $3$. For $L_{(2)}u=0$ to be $L^{2}$-wellposed, it is necessary that \begin{eqnarray}\label{20241010eq2}
\left | \int_{x}^{y}\tIm\big(d_{(2)}(\widetilde{x})\big )\dd\widetilde{x} \right | \lesssim_{L} |x-y|^{\frac{3}{4}}
\end{eqnarray} by Theorem \ref{20241015thm1}. Assume that this condition holds. Define two $\Psi$DOs
\begin{IEEEeqnarray*}{rCl}
\Phi_{0,3}(\ell;\xi, x) &:=& \frac{1}{5\langle \xi \rangle_{\ell}^{3}}\int_{-\infty}^{x}\chi\left  ( \frac{y-x}{\langle \xi \rangle_{\ell}^{4}} \right )\tIm \big (d_{(2)}(y)\big )\dd y, \\
\Phi_{3}(\ell;\xi, x)&:=& e^{\Phi_{0,3}}\bigg ( 1 + \frac{1}{5\langle \xi \rangle_{\ell}} \tRe\big(b(x)\big)\partial_{\xi}\Phi_{0,3} + \frac{1}{5\langle \xi \rangle_{\ell}^{2}} \left ( \tRe\big(c_{(1)}(x)\big) + \frac{1}{2}i\tRe\big(b'(x)\big) \right )\partial_{\xi}\Phi_{0,3} \\
&& \qquad \quad - \frac{1}{10\langle \xi \rangle_{\ell}}i\tRe\big(b'(x)\big)\Big ( \partial_{\xi}^{2}\Phi_{0,3} + \big ( \partial_{\xi}\Phi_{0,3}\big )^{2} \Big )\bigg ). \yesnumber \label{20241015eq10}
\end{IEEEeqnarray*} Then by \eqref{20241010eq2} and Lemma \ref{20241009lem1} we have \begin{eqnarray}\label{20241015eq12}
\Phi_{0,3} \in S_{(\ell)}^{0}, \quad \partial_{x}\Phi_{0,3} - \frac{1}{5\langle \xi \rangle_{\ell}^{3}}\tIm \big (d_{(2)}(x) \big ) \in S_{(\ell)}^{-4}, \quad \Phi_{3} \in S_{(\ell)}^{0}.
\end{eqnarray} Now define \begin{eqnarray}\label{20241015eq11}
L_{(3)} = D_{t} - D_{x}^{5}-A_{(2)} - \tRe\big(d_{(2)}(x)\big )D_{x}.
\end{eqnarray} Then $L_{(2)} \circ \Phi_{3} \equiv \Phi_{3} \circ L_{(3)} \pmod{S^{0}}$ for any $\ell \ge 1$, which is verified in Appendix \ref{subsubappB.1.3}. Arguing again as before shows that $L^{2}$-wellposedness for Cauchy problem $L_{(2)}u=0$ and that for $L_{(3)}u=0$ are equivalent. But note that $L_{(3)}$ is of the form $D_{t}-D_{x}^{5}-A$, where $A$ is a self-adjoint differential operator of order $3$. Hence, $L_{(3)}u=0$ is $L^{2}$-wellposed. (See Appendix \ref{appA}.)

In conclusion, $L^{2}$-wellposedness of the original problem $Lu=0$ is equivalent to the set of three conditions \eqref{20241008eq1}, \eqref{20241010eq1}, and \eqref{20241010eq2}. Plugging back $c_{(1)}$ and $d_{(1)}$ gives 
\begin{IEEEeqnarray*}{rCl}
\left | \int_{x}^{y} \tIm \left ( c(\widetilde{x})-2\tIm\big(b'(\widetilde{x})\big)+\frac{3}{2}i\tRe\big(b'(\widetilde{x})\big)\right )\dd\widetilde{x} \right |&\lesssim&\hspace{-0.1cm}{}_{L}\, |x-y|^{\frac{1}{2}}, \\
\bigg | \int_{x}^{y} \tIm \bigg ( d(\widetilde{x}) + 2i\tIm\big(b''(\widetilde{x})\big) - \frac{2}{5}\Big ( \tIm \big (b(\widetilde{x}) \big ) \Big )^{2} - \frac{3}{5}ib(\widetilde{x})\tIm \big (b(\widetilde{x}) \big ) \qquad \quad && \\
+ \frac{1}{5}i\tRe\big(b(\widetilde{x})\big ) \tIm \big (b(\widetilde{x}) \big )-2\tIm\big(c_{(1)}'(\widetilde{x})\big) \bigg )\dd\widetilde{x} \bigg |&\lesssim&\hspace{-0.1cm}{}_{L}\, |x-y|^{\frac{3}{4}}.
\end{IEEEeqnarray*}
Observe that for any $p \in \overline{C}^{\infty}(\RR)$, $\left | \int_{x}^{y} p'(\widetilde{x})\dd\widetilde{x} \right | \lesssim \min\{|x-y|, 1\}$, so above two conditions simplify into 
\begin{IEEEeqnarray*}{rCl}
\left | \int_{x}^{y} \tIm \big ( c(\widetilde{x})\big )\dd\widetilde{x} \right |&\lesssim&\hspace{-0.1cm}{}_{L}\, |x-y|^{\frac{1}{2}}, \\
\left | \int_{x}^{y} \tIm \left ( d(\widetilde{x})  - \frac{2}{5}i\tRe\big(b(\widetilde{x})\big ) \tIm \big (b(\widetilde{x}) \big ) \right )\dd\widetilde{x} \right |&\lesssim&\hspace{-0.1cm}{}_{L}\, |x-y|^{\frac{3}{4}}.
\end{IEEEeqnarray*}
These two conditions together with \eqref{20241008eq1} are exactly the three conditions appearing in the statement of Theorem \ref{20241015thm2}.
\end{proof}

\section{Proof of Theorem \ref{20241015thm3} (Sufficiency in $k=6$)}\label{sec4}

In this section, we prove Theorem \ref{20241015thm3}.

\begin{proof}[Proof of Theorem \ref{20241015thm3}]
If the Cauchy problem \eqref{20241015eq13} is $L^{2}$-wellposed, then by Theorem \ref{20241015thm1} it is necessary that 
\begin{IEEEeqnarray*}{rCl}
\left | \int_{x}^{y} \tIm \big (b(\widetilde{x})\big )\dd\widetilde{x} \right | \lesssim_{L} |x-y|^{\frac{1}{5}}. \yesnumber \label{20241010eq3}
\end{IEEEeqnarray*}
Assume that this condition holds. Define two $\Psi$DOs
\begin{IEEEeqnarray*}{rCl}
\Phi_{0,1}(\ell;\xi, x) &:=& \frac{1}{6}\left ( \frac{1}{\langle \xi\rangle_{\ell}}+\frac{\ell^{2}}{2\langle \xi \rangle_{\ell}^{3}}\right )\int_{-\infty}^{x} \chi \left ( \frac{y-x}{\langle \xi \rangle_{\ell}^{5}} \right )\tIm \big (b(y)\big ) \dd y, \\
\Phi_{1}(\ell;\xi, x) &:=& e^{\Phi_{0,1}}\bigg [ 1+ \frac{1}{6}\left ( \frac{1}{\langle \xi\rangle_{\ell}}+\frac{\ell^{2}}{2\langle \xi \rangle_{\ell}^{3}}\right ) \partial_{\xi}\Phi_{0, 1}\tRe\big(b(x)\big) +\frac{1}{6\langle \xi \rangle_{\ell}^{2}}\partial_{\xi}\Phi_{0,1}\left ( c(x)+\frac{5}{2}ib'(x)\right ) \\
&& \qquad \quad - \frac{1}{12\langle \xi \rangle_{\ell}}\Big (\partial_{\xi}^{2}\Phi_{0,1} + \big(\partial_{\xi}\Phi_{0,1}\big)^{2}\Big )i\tRe\big(b'(x)\big) \\
&& \qquad \quad + \frac{1}{\langle \xi \rangle_{\ell}^{3}} \partial_{\xi}\Phi_{0,1}\left ( -\frac{35}{72} b''(x) + \frac{5}{12}ic'(x) + \frac{1}{6} d(x) - \frac{1}{24}b(x)^{2} - \frac{1}{36}\Big ( \tRe \big (b(x) \big ) \Big )^{2}\right )  \\
&& \qquad \quad + \frac{1}{\langle \xi\rangle_{\ell}^{2}}\Big ( \partial_{\xi}^{2}\Phi_{0,1} + \big ( \partial_{\xi}\Phi_{0,1} \big )^{2}\Big )\left ( -\frac{1}{12}i c'(x) +\frac{5}{24}b''(x)
+ \frac{1}{72}\Big ( \tRe\big(b(x)\big)\Big )^{2}\right ) \\
&& \qquad \quad - \frac{1}{\langle \xi \rangle_{\ell}}\Big ( \partial_{\xi}^{3}\Phi_{0,1} + 3\partial_{\xi}^{2}\Phi_{0,1}\partial_{\xi}\Phi_{0,1} + \big ( \partial_{\xi}\Phi_{0,1} \big )^{3} \Big )\frac{1}{36}\tRe\big(b''(x)\big ) \bigg ], \yesnumber \label{20241015eq14}
\end{IEEEeqnarray*}
where $\chi \in C_{c}^{\infty}(-2, 2)$ is a fixed function with $\chi(x)=1 \; \forall \; |x| \le 1$. Then by Lemma \ref{20241009lem1}, one has that \begin{eqnarray}\label{20241015eq16}
\Phi_{0,1} \in S_{(\ell)}^{0}, \quad \partial_{x}\Phi_{0,1} - \frac{1}{6}\left ( \frac{1}{\langle \xi \rangle_{\ell}} + \frac{\ell^{2}}{2\langle \xi \rangle_{\ell}^{3}} \right ) \tIm \big (b(x) \big ) \in S_{(\ell)}^{-5}, \quad \Phi_{1} \in S_{(\ell)}^{0}.
\end{eqnarray} Then by calculations in Appendix \ref{subsubappB.2.1}, we have $L \circ \Phi_{1} \equiv \Phi_{1} \circ L_{(1)} \pmod{S^{0}}$ for any $\ell \ge 1$, where \begin{IEEEeqnarray*}{rCl}
L_{(1)} &=& D_{t}-D_{x}^{6}-\tRe\big(b(x)\big)D_{x}^{4}-\left ( c(x) - \frac{5}{2}\tIm\big(b'(x)\big)\right )D_{x}^{3} - \bigg (d(x)+\frac{1}{4}\Big(\tIm\big(b(x)\big)\Big)^{2}\\
&& - \frac{1}{2}i\tIm\big(b(x)\big)\tRe\big(b(x)\big)+\frac{10}{3}i \tIm \big (b''(x) \big )\bigg )D_{x}^{2} -  \bigg (e(x)-\frac{1}{3}i\tIm\big(b(x)\big)c(x)+ \frac{1}{4}i \tIm\big(b'(x)\big)\tIm\big(b(x)\big) \\
&& + \frac{1}{4}\tRe\big(b'(x)\big)\tIm\big(b(x)\big)- \frac{7}{12}\tRe\big(b(x)\big)\tIm\big(b'(x)\big)+\frac{5}{2} \tIm \big (b'''(x) \big )\bigg )D_{x}. \yesnumber\label{20241015eq15}
\end{IEEEeqnarray*}
Arguing as in the proof of Theorem \ref{20241015thm2} shows that $L^{2}$-wellposedness for Cauchy problem $Lu=0$ and that for $L_{(1)}u=0$ are equivalent. Thus it is now enough to study $L^{2}$-wellposedness of $L_{(1)}u=0$. For simplicity, write 
\begin{IEEEeqnarray*}{rCl}
L_{(1)} &=& D_{t}-D_{x}^{6}-\tRe\big(b(x)\big)D_{x}^{4}-\left ( c(x) - \frac{5}{2}\tIm\big(b'(x)\big)\right )D_{x}^{3}  - \bigg (d(x)+\frac{1}{4}\Big(\tIm\big(b(x)\big)\Big)^{2} \\
&& - \frac{1}{2}i\tIm\big(b(x)\big)\tRe\big(b(x)\big)+\frac{10}{3}i \tIm \big (b''(x) \big )\bigg )D_{x}^{2} -  \bigg (e(x)-\frac{1}{3}i\tIm\big(b(x)\big)c(x)+ \frac{1}{4}i \tIm\big(b'(x)\big)\tIm\big(b(x)\big) \\
&& + \frac{1}{4}\tRe\big(b'(x)\big)\tIm\big(b(x)\big)- \frac{7}{12}\tRe\big(b(x)\big)\tIm\big(b'(x)\big)+\frac{5}{2} \tIm \big (b'''(x) \big )\bigg )D_{x} \\
&=& D_{t}-D_{x}^{6}-\left (\tRe\big(b(x)\big)D_{x}^{4}-2i\tRe\big(b'(x)\big)D_{x}^{3} - i \tRe\big(b'''(x)\big)D_{x} \right )\\
&& -\left ( c(x) - \frac{5}{2}\tIm\big(b'(x)\big)+2i\tRe\big(b'(x)\big)\right )D_{x}^{3} \\
&& - \left (d(x)+\frac{1}{4}\Big(\tIm\big(b(x)\big)\Big)^{2}- \frac{1}{2}i\tIm\big(b(x)\big)\tRe\big(b(x)\big)+\frac{10}{3}i \tIm \big (b''(x) \big )\right )D_{x}^{2} \\
&& -  \bigg (e(x)-\frac{1}{3}i\tIm\big(b(x)\big)c(x)+ \frac{1}{4}i \tIm\big(b'(x)\big)\tIm\big(b(x)\big) + \frac{1}{4}\tRe\big(b'(x)\big)\tIm\big(b(x)\big)\\
&& \quad\;\; - \frac{7}{12}\tRe\big(b(x)\big)\tIm\big(b'(x)\big)+\frac{5}{2} \tIm \big (b'''(x) \big )+i\tRe\big(b'''(x)\big)\bigg )D_{x} \\
&=:& D_{t}-D_{x}^{6}-A_{(1)} - c_{(1)}(x)D_{x}^{3}-d_{(1)}(x)D_{x}^{2}-e_{(1)}(x)D_{x}, \yesnumber \label{20241015eq17}
\end{IEEEeqnarray*}
where $A_{(1)}$ is a self-adjoint differential operator of order $4$. Then by Theorem \ref{20241015thm1}, for $L_{(1)}u=0$ to be $L^{2}$-wellposed, we must have \begin{eqnarray}\label{20241011eq1}
\left | \int_{x}^{y} \tIm \big (c_{(1)}(\widetilde{x})\big)\dd\widetilde{x} \right | \lesssim_{L} |x-y|^{\frac{2}{5}}.
\end{eqnarray}
Assume that this condition holds, and define two $\Psi$DOs
\begin{IEEEeqnarray*}{rCl}
\Phi_{0,2}(\ell;\xi, x)&:=& \frac{1}{6}\left ( \frac{1}{\langle \xi \rangle_{\ell}^{2}} + \frac{\ell^{2}}{2\langle \xi \rangle_{\ell}^{4}}\right ) \int_{-\infty}^{x} \chi \left ( \frac{y-x}{\langle \xi \rangle_{\ell}^{5}} \right )\tIm \big (c_{(1)}(y)\big)\dd y, \\
\Phi_{2}(\ell;\xi, x) &:=& e^{\Phi_{0,2}}\bigg [ 1+ \frac{1}{6}\left ( \frac{1}{\langle \xi\rangle_{\ell}}+\frac{\ell^{2}}{2\langle \xi \rangle_{\ell}^{3}}\right ) \partial_{\xi}\Phi_{0, 2}\tRe\big(b(x)\big) +\frac{1}{6\langle \xi \rangle_{\ell}^{2}}\partial_{\xi}\Phi_{0,2}\left ( \tRe\big(c_{(1)}(x)\big)+\frac{1}{2}i\tRe\big(b'(x)\big)\right ) \\
&& \qquad \quad - \frac{1}{12\langle \xi \rangle_{\ell}}\Big (\partial_{\xi}^{2}\Phi_{0,2} + \big(\partial_{\xi}\Phi_{0,2}\big)^{2}\Big )i\tRe\big(b'(x)\big) \\
&& \qquad \quad + \frac{1}{\langle \xi \rangle_{\ell}^{3}} \partial_{\xi}\Phi_{0,2}\left ( \frac{1}{6} d_{(1)}(x) +\frac{5}{12}ic_{(1)}'(x) + \frac{25}{72}\tRe\big(b''(x)\big) - \frac{5}{72}\Big(\tRe\big(b(x)\big)\Big)^{2} \right )  \\
&& \qquad \quad + \frac{1}{\langle \xi\rangle_{\ell}^{2}}\Big ( \partial_{\xi}^{2}\Phi_{0,2} + \big ( \partial_{\xi}\Phi_{0,2} \big )^{2}\Big )\left ( -\frac{1}{12}i \tRe\big(c_{(1)}'(x)\big) + \frac{1}{24}\tRe\big(b''(x)\big) + \frac{1}{72}\Big ( \tRe\big(b(x)\big)\Big )^{2}\right ) \\
&& \qquad \quad - \frac{1}{\langle \xi \rangle_{\ell}}\Big ( \partial_{\xi}^{3}\Phi_{0,2} + 3\partial_{\xi}^{2}\Phi_{0,2}\partial_{\xi}\Phi_{0,2} + \big ( \partial_{\xi}\Phi_{0,2} \big )^{3} \Big )\frac{1}{36}\tRe\big(b''(x)\big ) \bigg ]. \yesnumber \label{20241015eq18}
\end{IEEEeqnarray*}
By Lemma \ref{20241009lem1}, these $\Psi$DO satisfy \begin{eqnarray} \label{20241015eq20}
\Phi_{0,2} \in S_{(\ell)}^{0}, \quad \partial_{x}\Phi_{0,2} - \frac{1}{6}\left ( \frac{1}{\langle \xi \rangle_{\ell}^{2}} + \frac{\ell^{2}}{2\langle \xi \rangle_{\ell}^{4}} \right ) \tIm \big (c_{(1)}(x) \big ) \in S_{(\ell)}^{-5}, \quad \Phi_{2} \in S_{(\ell)}^{0}.
\end{eqnarray}
Then by Appendix \ref{subsubappB.2.2}, we have $L_{(1)} \circ \Phi_{2} \equiv \Phi_{2} \circ L_{(2)} \pmod{S^{0}}$ for any $\ell \ge 1$, where \begin{IEEEeqnarray*}{rCl}
L_{(2)} &=& D_{t}-D_{x}^{6}-A_{(1)}- \tRe\big(c_{(1)}(x)\big)D_{x}^{3} - \left ( d_{(1)}(x) - \frac{5}{2}\tIm\big(c_{(1)}'(x)\big) \right )D_{x}^{2} \\
&& - \left ( e_{(1)}(x) - \frac{1}{3}i\tRe\big(b(x)\big)\tIm\big(c_{(1)}(x)\big) + \frac{10}{3}i\tIm\big(c_{(1)}''(x)\big) \right )D_{x}. \yesnumber \label{20241015eq19}
\end{IEEEeqnarray*}
So, the $L^{2}$-wellposedness of the Cauchy problem $L_{(1)}u=0$ is equivalent to that of $L_{(2)}u=0$. To use Theorem \ref{20241015thm1}, we rewrite $L_{(2)}$ as 
\begin{IEEEeqnarray*}{rCl}
L_{(2)} &=& D_{t}-D_{x}^{6}-A_{(1)}- \tRe\big(c_{(1)}(x)\big)D_{x}^{3} - \left ( d_{(1)}(x) - \frac{5}{2}\tIm\big(c_{(1)}'(x)\big) \right )D_{x}^{2} \\
&& - \left ( e_{(1)}(x) - \frac{1}{3}i\tRe\big(b(x)\big)\tIm\big(c_{(1)}(x)\big) + \frac{10}{3}i\tIm\big(c_{(1)}''(x)\big) \right )D_{x} \\
&=& D_{t} - D_{x}^{6} - \left (A_{(1)} + \tRe\big(c_{(1)}(x)\big)D_{x}^{3} - \frac{3}{2}i\tRe\big(c_{(1)}'(x)\big)D_{x}^{2} \right )\\
&& - \left ( d_{(1)}(x) - \frac{5}{2}\tIm\big(c_{(1)}'(x)\big) + \frac{3}{2}i\tRe\big(c_{(1)}'(x)\big)\right )D_{x}^{2} \\
&& - \left ( e_{(1)}(x) - \frac{1}{3}i\tRe\big(b(x)\big)\tIm\big(c_{(1)}(x)\big) + \frac{10}{3}i\tIm\big(c_{(1)}''(x)\big) \right )D_{x} \\
&=:& D_{t}-D_{x}^{6}-A_{(2)} - d_{(2)}(x)D_{x}^{2}-e_{(2)}(x)D_{x}. \yesnumber\label{20241015eq21}
\end{IEEEeqnarray*}
Then $A_{(2)}$ is a self-adjoint differential operator of order $4$. So we can apply Theorem \ref{20241015thm1}, which gives that 
\begin{eqnarray}\label{20241014eq1}
\left | \int_{x}^{y}\tIm\big (d_{(2)}(\widetilde{x})\big )\dd\widetilde{x} \right | \lesssim_{L} |x-y|^{\frac{3}{5}}
\end{eqnarray} is necessary for the $L^{2}$-wellposedness of $L_{(2)}u=0$. Assume this condition from now on. Define two $\Psi$DOs 
\begin{IEEEeqnarray*}{rCl}
\Phi_{0,3}(\ell;\xi, x)&:=& \frac{1}{6\langle \xi \rangle_{\ell}^{3}} \int_{-\infty}^{x} \chi \left ( \frac{y-x}{\langle \xi \rangle_{\ell}^{5}} \right )\tIm \big (d_{(2)}(y)\big)\dd y, \\
\Phi_{3}(\ell;\xi, x) &:=& e^{\Phi_{0,3}}\bigg [ 1+ \frac{1}{6}\left ( \frac{1}{\langle \xi\rangle_{\ell}}+\frac{\ell^{2}}{2\langle \xi \rangle_{\ell}^{3}}\right ) \partial_{\xi}\Phi_{0, 3}\tRe\big(b(x)\big) +\frac{1}{6\langle \xi \rangle_{\ell}^{2}}\partial_{\xi}\Phi_{0,3}\left ( \tRe\big(c_{(1)}(x)\big)+\frac{1}{2}i\tRe\big(b'(x)\big)\right ) \\
&& \qquad \quad - \frac{1}{12\langle \xi \rangle_{\ell}}\Big (\partial_{\xi}^{2}\Phi_{0,3} + \big(\partial_{\xi}\Phi_{0,3}\big)^{2}\Big )i\tRe\big(b'(x)\big) \\
&& \qquad \quad + \frac{1}{\langle \xi \rangle_{\ell}^{3}} \partial_{\xi}\Phi_{0,3}\left ( \frac{1}{6} \tRe\big (d_{(2)}(x)\big ) +\frac{1}{6}i\tRe\big (c_{(1)}'(x)\big) + \frac{25}{72}\tRe\big(b''(x)\big) - \frac{5}{72}\Big(\tRe\big(b(x)\big)\Big)^{2} \right )  \\
&& \qquad \quad + \frac{1}{\langle \xi\rangle_{\ell}^{2}}\Big ( \partial_{\xi}^{2}\Phi_{0,3} + \big ( \partial_{\xi}\Phi_{0,3} \big )^{2}\Big )\left ( -\frac{1}{12}i \tRe\big(c_{(1)}'(x)\big) + \frac{1}{24}\tRe\big(b''(x)\big) + \frac{1}{72}\Big ( \tRe\big(b(x)\big)\Big )^{2}\right ) \\
&& \qquad \quad - \frac{1}{\langle \xi \rangle_{\ell}}\Big ( \partial_{\xi}^{3}\Phi_{0,3} + 3\partial_{\xi}^{2}\Phi_{0,3}\partial_{\xi}\Phi_{0,3} + \big ( \partial_{\xi}\Phi_{0,3} \big )^{3} \Big )\frac{1}{36}\tRe\big(b''(x)\big ) \bigg ]. \yesnumber\label{20241015eq22}
\end{IEEEeqnarray*}
By Lemma \ref{20241009lem1}, we have \begin{eqnarray}\label{20241015eq24}
\Phi_{0,3} \in S_{(\ell)}^{0}, \quad \partial_{x}\Phi_{0,3} - \frac{1}{6\langle \xi \rangle_{\ell}^{3}} \tIm \big (d_{(2)}(x) \big ) \in S_{(\ell)}^{-5}, \quad \Phi_{3} \in S_{(\ell)}^{0}.
\end{eqnarray} Now define
\begin{IEEEeqnarray*}{rCl}
L_{(3)}&=& D_{t}-D_{x}^{6}-A_{(2)}-\tRe\big(d_{(2)}(x)\big)D_{x}^{2} - \left ( e_{(2)}(x) -\frac{5}{2}\tIm\big (d_{(2)}'(x)\big) \right )D_{x}. \yesnumber\label{20241015eq23}
\end{IEEEeqnarray*} Then by Appendix \ref{subsubappB.2.3}, we have $L_{(2)} \circ \Phi_{3} \equiv \Phi_{3} \circ L_{(3)} \pmod{S^{0}}$ for any $\ell \ge 1$. Hence, the $L^{2}$-wellposedness for the Cauchy problem $L_{(2)}u=0$ is equivalent to that for $L_{(3)}u=0$. Rewrite $L_{(3)}$ as
\begin{IEEEeqnarray*}{rCl}
L_{(3)} &=& D_{t}-D_{x}^{6}-A_{(2)}-\tRe\big(d_{(2)}(x)\big)D_{x}^{2} - \left ( e_{(2)}(x) -\frac{5}{2}\tIm\big (d_{(2)}'(x)\big) \right )D_{x} \\
&=& D_{t} - D_{x}^{6} - \left ( A_{(2)} + \tRe\big(d_{(2)}(x)\big)D_{x}^{2}-i\tRe\big(d_{(2)}'(x)\big)D_{x}\right ) -\left ( e_{(2)}(x) -\frac{5}{2}\tIm\big (d_{(2)}'(x)\big)+ i \tRe\big(d_{(2)}'(x)\big) \right )D_{x} \\
&=:& D_{t}-D_{x}^{6}-A_{(3)}-e_{(3)}(x)D_{x}, \yesnumber \label{20241015eq25}
\end{IEEEeqnarray*}
where $A_{(3)}$ is a $4$th order self-adjoint differential operator. Then by Theorem \ref{20241015thm1}, for $L_{(3)}u=0$ to $L^{2}$-wellposed, the following estimate on $e_{(3)}$ must be satisfied: 
\begin{eqnarray} \label{20241014eq2}
\left | \int_{x}^{y}\tIm\big(e_{(3)}(\widetilde{x})\big)\dd\widetilde{x} \right | \lesssim_{L} |x-y|^{\frac{4}{5}}.
\end{eqnarray}
Assume that this estimate holds. Define two $\Psi$DOs
\begin{IEEEeqnarray*}{rCl}
\Phi_{0,4}(\ell;\xi, x)&:=& \frac{1}{6\langle \xi \rangle_{\ell}^{4}} \int_{-\infty}^{x}\chi\left ( \frac{y-x}{\langle \xi \rangle_{\ell}^{5}}\right )\tIm \big (e_{(3)}(y)\big)\dd y, \\
\Phi_{4}(\ell;\xi, x)&:=& e^{\Phi_{0,4}}\bigg [ 1+ \frac{1}{6}\left ( \frac{1}{\langle \xi\rangle_{\ell}}+\frac{\ell^{2}}{2\langle \xi \rangle_{\ell}^{3}}\right ) \partial_{\xi}\Phi_{0,4}\tRe\big(b(x)\big) +\frac{1}{6\langle \xi \rangle_{\ell}^{2}}\partial_{\xi}\Phi_{0,4}\left ( \tRe\big(c_{(1)}(x)\big)+\frac{1}{2}i\tRe\big(b'(x)\big)\right ) \\
&& \qquad \quad - \frac{1}{12\langle \xi \rangle_{\ell}}\Big (\partial_{\xi}^{2}\Phi_{0,4} + \big(\partial_{\xi}\Phi_{0,4}\big)^{2}\Big )i\tRe\big(b'(x)\big) \\
&& \qquad \quad + \frac{1}{\langle \xi \rangle_{\ell}^{3}} \partial_{\xi}\Phi_{0,4}\left ( \frac{1}{6} \tRe\big (d_{(2)}(x)\big ) +\frac{1}{6}i\tRe\big (c_{(1)}'(x)\big) + \frac{25}{72}\tRe\big(b''(x)\big) - \frac{5}{72}\Big(\tRe\big(b(x)\big)\Big)^{2} \right )  \\
&& \qquad \quad + \frac{1}{\langle \xi\rangle_{\ell}^{2}}\Big ( \partial_{\xi}^{2}\Phi_{0,4} + \big ( \partial_{\xi}\Phi_{0,4} \big )^{2}\Big )\left ( -\frac{1}{12}i \tRe\big(c_{(1)}'(x)\big) + \frac{1}{24}\tRe\big(b''(x)\big) + \frac{1}{72}\Big ( \tRe\big(b(x)\big)\Big )^{2}\right ) \\
&& \qquad \quad - \frac{1}{\langle \xi \rangle_{\ell}}\Big ( \partial_{\xi}^{3}\Phi_{0,4} + 3\partial_{\xi}^{2}\Phi_{0,4}\partial_{\xi}\Phi_{0,4} + \big ( \partial_{\xi}\Phi_{0,4} \big )^{3} \Big )\frac{1}{36}\tRe\big(b''(x)\big ) \bigg ]. \yesnumber \label{20241015eq26}
\end{IEEEeqnarray*}
By Lemma \ref{20241009lem1}, we have \begin{eqnarray}\label{20241015eq28}
\Phi_{0, 4} \in S_{(\ell)}^{0}, \quad \partial_{x}\Phi_{0,4} - \frac{1}{6\langle \xi \rangle_{\ell}^{4}}\tIm\big(e_{(3)}(x)\big) \in S_{(\ell)}^{-5}, \quad \Phi_{4} \in S_{(\ell)}^{0}.
\end{eqnarray}
Then we have $L_{(3)} \circ \Phi_{4} \equiv \Phi_{4} \circ L_{(4)}\pmod{S^{0}}$ for any $\ell \ge 1$, where $L_{(4)}$ is defined by \begin{eqnarray} \label{20241015eq27}
L_{(4)} = D_{t}-D_{x}^{6}-A_{(3)}-\tRe\big(e_{(3)}(x)\big)D_{x}.
\end{eqnarray} The calculations are done in Appendix \ref{subsubappB.2.4}. Thus, the $L^{2}$-wellposedness of $L_{(3)}u=0$ is equivalent to $L_{(4)}u=0$, which is indeed $L^{2}$-wellposed since $A_{(3)}+\tRe\big(e_{(3)}(x)\big)D_{x}$ is self-adjoint. (See Appendix \ref{appA}.)

To sum up, the $L^{2}$-wellposedness of the original Cauchy problem $Lu=0$ is equivalent to the set of four conditions \eqref{20241010eq3}, \eqref{20241011eq1}, \eqref{20241014eq1}, and \eqref{20241014eq2}. Writing $c_{(1)}, d_{(2)},$ and $e_{(3)}$ in terms of $b, c, d, e$ gives the set of four conditions
\begin{IEEEeqnarray*}{rCl}
\left | \int_{x}^{y}\tIm\big(b(\widetilde{x})\big)\dd\widetilde{x} \right | &\lesssim&\hspace{-0.1cm}{}_{L}\,|x-y|^{\frac{1}{5}}, \\
\left | \int_{x}^{y}\tIm\left (c(\widetilde{x})-\frac{5}{2}\tIm\big(b'(\widetilde{x})\big)+2i\tRe\big(b'(\widetilde{x})\big)\right )\dd\widetilde{x} \right | &\lesssim&\hspace{-0.1cm}{}_{L}\,|x-y|^{\frac{2}{5}}, \\
\bigg | \int_{x}^{y}\tIm\bigg ( d(\widetilde{x})+\frac{1}{4}\Big ( \tIm\big(b(\widetilde{x})\big)\Big )^{2}-\frac{1}{2}i\tIm\big(b(\widetilde{x})\big)\tRe\big(b(\widetilde{x})\big )\qquad\qquad\; && \\
+ \frac{10}{3}i\tIm\big(b''(\widetilde{x})\big)-\frac{5}{2}\tIm\big(c_{(1)}'(\widetilde{x})\big)+\frac{3}{2}i\tRe\big(c_{(1)}'(\widetilde{x})\big)\bigg )\dd\widetilde{x} \bigg | &\lesssim&\hspace{-0.1cm}{}_{L}\,|x-y|^{\frac{3}{5}}, \\
\bigg | \int_{x}^{y}\tIm\bigg ( e(\widetilde{x})-\frac{1}{3}i\tIm\big(b(\widetilde{x})\big)c(\widetilde{x}) + \frac{1}{4}i \tIm\big(b'(\widetilde{x})\big)\tIm\big(b(\widetilde{x})\big) + \frac{1}{4}\tRe\big(b'(\widetilde{x})\big)\tIm\big(b(\widetilde{x})\big) \qquad \qquad\quad\;\; && \\
- \frac{7}{12}\tRe\big(b(\widetilde{x})\big)\tIm\big(b'(\widetilde{x})\big) +\frac{5}{2} \tIm \big (b'''(\widetilde{x}) \big )+i\tRe\big(b'''(\widetilde{x})\big)-\frac{1}{3}i\tRe\big(b(\widetilde{x})\big)\tIm\big(c(\widetilde{x})\big) \qquad \;
&& \\ -\frac{2}{3}i\tRe\big(b(\widetilde{x})\big)\tRe\big(b'(\widetilde{x})\big)+\frac{10}{3}i\tIm\big(c_{(1)}''(\widetilde{x})\big)-\frac{5}{2}\tIm\big(d_{(2)}'(\widetilde{x})\big )+i\tRe\big(d_{(2)}'(\widetilde{x})\big)\bigg )\dd\widetilde{x} \bigg | &\lesssim&\hspace{-0.1cm}{}_{L}\,|x-y|^{\frac{4}{5}}.
\end{IEEEeqnarray*}
Note that for any $p \in \overline{C}^{\infty}(\RR)$, one has $\left | \int_{x}^{y}p'(\widetilde{x})\dd\widetilde{x} \right | \lesssim \min\{|x-y|, 1\}$, so that $\left | \int_{x}^{y}p'(\widetilde{x})\dd\widetilde{x} \right | \lesssim |x-y|^{\alpha}$ for any $0 \le \alpha \le 1$. Using this fact simplifies the four conditions into
\begin{IEEEeqnarray*}{rCl} 
\left | \int_{x}^{y}\tIm\big(b(\widetilde{x})\big)\dd\widetilde{x} \right | &\lesssim&\hspace{-0.1cm}{}_{L}\,|x-y|^{\frac{1}{5}}, \\
\left | \int_{x}^{y}\tIm\big (c(\widetilde{x})\big )\dd\widetilde{x} \right | &\lesssim&\hspace{-0.1cm}{}_{L}\,|x-y|^{\frac{2}{5}}, \\
\left | \int_{x}^{y}\tIm\left ( d(\widetilde{x})-\frac{1}{4}b(\widetilde{x})^{2} \right )\dd\widetilde{x} \right | &\lesssim&\hspace{-0.1cm}{}_{L}\,|x-y|^{\frac{3}{5}}, \\
\bigg | \int_{x}^{y}\tIm\bigg ( e(\widetilde{x})-\frac{1}{3}b(\widetilde{x})c(\widetilde{x})\bigg )\dd\widetilde{x} \bigg | &\lesssim&\hspace{-0.1cm}{}_{L}\,|x-y|^{\frac{4}{5}}.
\end{IEEEeqnarray*}
These are exactly the four conditions appearing in the statement of Theorem \ref{20241015thm3}. 
\end{proof}

\appendix

\section{$L^{2}$-wellposedness for the self-adjoint case}\label{appA}

In this appendix, we prove the $L^{2}$-wellposedness for the Cauchy problem \[
\begin{cases}
Lu:=(D_{t}-D_{x}^{k}-A)u=0 & (x, t) \in \RR^{2} \\
u(x, t=0)=u_{0}(x) & x \in \RR
\end{cases},
\] where $k \in \{5, 6\}$ and $A$ is a self-adjoint differential operator of order $\le k-2$, using pseudodifferential operators.

For $k=5$, $A$ is of the form \[
\alpha(x)D_{x}^{3}+\left ( \beta(x) - \frac{3}{2}i\alpha'(x)\right )D_{x}^{2} + \left ( \gamma(x) - i\beta'(x) \right )D_{x},
\] where $\alpha, \beta, \gamma \in \overline{C}^{\infty}(\RR)$ are real-valued smooth functions whose derivatives of every order are bounded. Now fix $s \in \RR$ arbitrarily. Define $\langle D \rangle := \langle D_{x}\rangle := \mathrm{Op}\big(\sqrt{1+\xi^{2}}\big)$. Then, one can calculate that
\begin{IEEEeqnarray*}{rCl}
B_{s}&:=&\langle D \rangle^{s} \circ (D_{x}^{5}+A) \circ \langle D\rangle^{-s} \\
&\equiv& D_{x}^{5}+\alpha(x)D_{x}^{3}+\left ( \beta(x)-\frac{3}{2}i\alpha'(x)-is\alpha'(x)\right )D_{x}^{2}\\
&& +\left ( \gamma(x)-i\beta'(x)-is\left ( \beta'(x)-\frac{3}{2}i\alpha''(x)\right ) - \frac{1}{2}s(s-1)\alpha''(x) \right )D_{x} \pmod{S^{0}}.
\end{IEEEeqnarray*}
Now define $\Phi \in S_{(\ell)}^{0}$ by \[
\Phi(\ell;\xi, x) := 1 - \frac{s}{5\langle \xi \rangle_{\ell}^{2}}\alpha(x) - \frac{s}{5\langle \xi \rangle_{\ell}^{3}}\left ( \beta(x)+i\alpha'(x) - \frac{1}{2}is\alpha'(x)\right ).
\]
Then observe that
\begin{IEEEeqnarray*}{rCl}
B_{s} \circ \Phi - \Phi \circ B_{0} &\equiv& \left ( -is\alpha'(x)\xi^{2}-is\left(\beta'(x)-\frac{3}{2}i\alpha''(x)\right)\xi-\frac{1}{2}s(s-1)\alpha''(x)\xi \right ) \\
&& - i\left ( 5\xi^{4}\right )\left ( -\frac{s}{5\langle \xi \rangle_{\ell}^{2}}\alpha'(x) - \frac{s}{5\langle \xi \rangle_{\ell}^{3}}\left ( \beta'(x)+i\alpha''(x)-\frac{1}{2}is\alpha''(x)\right )\right ) \\
&& - \frac{1}{2}\left(20\xi^{3}\right ) \left ( -\frac{s}{5\langle \xi \rangle_{\ell}^{2}}\alpha''(x)\right ) \equiv  0 \pmod{S^{0}}.
\end{IEEEeqnarray*}
Since $\Phi-\mathrm{Id} \in S_{(\ell)}^{-1}$, $\Phi$ has its inverse in $S^{0}$ if $\ell \ge 1$ is sufficiently large. Fix such $\ell$ so that $\Phi$ and $\Phi^{-1}$ are in $S^{0}$. Define $E_{1}:=B_{s}\circ \Phi - \Phi \circ B_{0} \in S^{0}$, and $E_{2}:=B_{0}-B_{0}^{\ast} \in S^{0}$. Then,
\begin{IEEEeqnarray*}{rCl}
\IEEEeqnarraymulticol{3}{l}{\partial_{t}\left \|\Phi^{-1}\langle D \rangle^{s}u\right \|_{L^{2}}^{2}}
\\ &=& 2\tRe \left \langle \partial_{t} \Phi^{-1}\langle D \rangle^{s}u, \Phi^{-1}\langle D \rangle^{s}u \right \rangle \\
& = & 2\tRe\left \langle i(D_{t}-B_{0})\Phi^{-1}\langle D \rangle^{s}u, \Phi^{-1}\langle D \rangle^{s}u\right \rangle + 2\tRe\left \langle iB_{0}\Phi^{-1}\langle D \rangle^{s}u, \Phi^{-1}\langle D \rangle^{s}u\right \rangle \\
& = & 2\tRe\left \langle i\Phi^{-1}(D_{t}-B_{s}+E_{1}\Phi^{-1})\langle D \rangle^{s}u, \Phi^{-1}\langle D \rangle^{s}u\right \rangle + \tRe\left \langle i(B_{0}-B_{0}^{\ast})\Phi^{-1}\langle D \rangle^{s}u, \Phi^{-1}\langle D \rangle^{s}u\right \rangle \\
& = & 2\tRe\left \langle i\Phi^{-1}\langle D \rangle^{s}(D_{t}-B_{0})u, \Phi^{-1}\langle D \rangle^{s}u\right \rangle + 2\tRe\left\langle i\Phi^{-1}E_{1}\Phi^{-1}\langle D\rangle^{s}u, \Phi^{-1}\langle D\rangle^{s}u\right \rangle \\
&& + \tRe\left \langle iE_{2}\Phi^{-1}\langle D \rangle^{s}u, \Phi^{-1}\langle D \rangle^{s}u\right \rangle \\
& \lesssim & \|(D_{t}-B_{0})u\|_{H^{s}}\|u\|_{H^{s}}+\|u\|_{H^{s}}^{2} 
\end{IEEEeqnarray*}
where the constants depend on the $L^{2}$ norm of $E_{1}, E_{2}$, and $\Phi^{-1}$. These depends only on $L$ and $s$. Hence by integrating above inequality, one gets \[
\|u(t)\|_{H^{s}}^{2} \lesssim_{L,s} \|u(t=0)\|_{H^{s}}^{2} + \left | \int_{0}^{t} \left ( \|(D_{t}-B_{0})u(\tau)\|_{H^{s}}\|u(\tau)\|_{H^{s}}+\|u(\tau)\|_{H^{s}}^{2}\right )\dd\tau \right |,
\] and by Gr\"onwall inequality we conclude that \[
\|u(t)\|_{H^{s}} \lesssim_{L,s,T} \|u(t=0)\|_{H^{s}} + \left | \int_{0}^{t}\|(D_{t}-B_{0})u(\tau)\|_{H^{s}}\dd\tau \right | \quad \forall \; |t| \le T.
\] Since $B_{0}^{\ast}$ is self-adjoint, similar calculation proves the time-reversed estimate for $D_{t}-B_{0}^{\ast}$ too. Since $s$ was arbitrary, we conclude that $(D_{t}-B_{0})u=0$ is $L^{2}$-wellposed, as desired.

For $k=6$, $A$ is of the form \[
\alpha(x)D_{x}^{4} + \left ( \beta(x)-2i\alpha'(x)\right )D_{x}^{3} + \left ( \gamma(x)-\frac{3}{2}i\beta'(x)\right )D_{x}^{2}+\left ( \delta(x)-i\gamma'(x)-i\alpha'''(x)\right )D_{x},
\] where $\alpha, \beta, \gamma, \delta \in \overline{C}^{\infty}(\RR)$ are real-valued. Now fix arbitrary $s \in \RR$. Then one can calculate that
\begin{IEEEeqnarray*}{rCl}
B_{s}&:=&\langle D \rangle^{s} \circ (D_{x}^{6}+A) \circ \langle D\rangle^{-s} \\
&\equiv& D_{x}^{6}+\alpha(x)D_{x}^{4} + \left ( \beta(x)-2i\alpha'(x)-is\alpha'(x)\right )D_{x}^{3} \\
&& + \left ( \gamma(x)-\frac{3}{2}i\beta'(x)-is\left (\beta'(x)-2i\alpha''(x)\right) - \frac{1}{2}s(s-1)\alpha''(x)\right )D_{x}^{2} \\
&& + \bigg ( \delta(x) - i\gamma'(x)-i\alpha'''(x)+is\alpha'(x)-is\left (\gamma'(x)-\frac{3}{2}i\beta''(x)\right )\\
&& \quad\;\;\; -\frac{1}{2}s(s-1)\left (\beta''(x)-2i\alpha'''(x)\right )+\frac{1}{6}is(s-1)(s-2)\alpha'''(x) \bigg )D_{x} \\
&\equiv & D_{x}^{6}+\alpha(x)D_{x}^{4} + \left ( \beta(x)-i(s+2)\alpha'(x)\right )D_{x}^{3} + \left (\gamma(x)-i\left(s+\frac{3}{2}\right )\beta'(x) - \frac{1}{2}s(s+3)\alpha''(x)\right )D_{x}^{2} \\
&& + \left ( \delta(x) - i(s+1)\gamma'(x)-\frac{1}{2}s(s+2)\beta''(x)+\frac{1}{6}is(s-1)(s+4)\alpha'''(x)-i\alpha'''(x)+is\alpha'(x) \right )D_{x} 
\end{IEEEeqnarray*}
Define $\Phi \in S_{(\ell)}^{0}$ by \begin{IEEEeqnarray*}{rCl}
\Phi(\ell;\xi,x)&:=&1 - \frac{s}{6\langle \xi\rangle_{\ell}^{2}}\alpha(x)-\frac{s}{6\langle \xi\rangle_{\ell}^{3}}\left ( \beta(x)+i\alpha'(x)-\frac{1}{2}is\alpha'(x)\right )\\
&& -\frac{s}{6\langle \xi\rangle_{\ell}^{4}}\left ( \gamma(x)+\frac{3}{2}i\beta'(x)-\frac{1}{2}is\beta'(x)+\left (\frac{3}{2}+\frac{3}{4}s-\frac{1}{6}s^{2}\right )\alpha''(x) - \frac{1}{2}\alpha(x)^{2}-\frac{s}{12}\alpha(x)^{2} \right )
\end{IEEEeqnarray*}
Then, we have
\begin{IEEEeqnarray*}{rCl}
\IEEEeqnarraymulticol{3}{l}{B_{s}\circ\Phi - \Phi \circ B_{0}} \\
&\equiv& -is\alpha'(x)\xi^{3}-\left(is\beta'(x)+\frac{1}{2}s(s+3)\alpha''(x)\right )\xi^{2} \\
&& -\left ( is\gamma'(x)+\frac{1}{2}s(s+2)\beta''(x)-\frac{1}{6}is(s-1)(s+4)\alpha'''(x)-is\alpha'(x)\right )\xi \\
&& + \left (-is\alpha'(x)\xi^{3}\right)\left(-\frac{s}{6\langle \xi\rangle_{\ell}^{2}}\alpha(x)\right ) - i\left (6\xi^{5} + 4\alpha(x)\xi^{3}\right )\left (-\frac{s}{6\langle \xi\rangle_{\ell}^{2}}\alpha'(x)\right ) \\
&& - i\left (6\xi^{5}\right )\bigg ( -\frac{s}{6\langle \xi\rangle_{\ell}^{3}}\left ( \beta'(x)+i\alpha''(x)-\frac{1}{2}is\alpha''(x)\right ) \\
&& -\frac{s}{6\langle \xi\rangle_{\ell}^{4}}\bigg ( \gamma(x)'+\frac{3}{2}i\beta''(x)-\frac{1}{2}is\beta''(x) +\left (\frac{3}{2}+\frac{3}{4}s-\frac{1}{6}s^{2}\right )\alpha'''(x) - \alpha(x)\alpha'(x) -\frac{s}{6}\alpha(x)\alpha'(x)\bigg )\bigg ) \\
&& - \frac{1}{2}\left (30\xi^{4}\right )\left (-\frac{s}{6\langle \xi\rangle_{\ell}^{2}}\alpha''(x)-\frac{s}{6\langle \xi\rangle_{\ell}^{3}}\left (\beta''(x)+i\alpha'''(x)-\frac{1}{2}is\alpha'''(x)\right )\right ) + \frac{1}{6}i\left (120\xi^{3}\right)\left (-\frac{s}{6\langle \xi\rangle_{\ell}^{2}}\alpha'''(x)\right) \\
&& +i\left ( \frac{s\xi}{3\langle \xi\rangle_{\ell}^{4}}\alpha(x)\right )\left ( \alpha'(x)\xi^{4}\right ) \\
&\equiv& -is\alpha'(x)\xi^{3}-is\beta'(x)\xi^{2}-\frac{1}{2}s(s+3)\alpha''(x)\xi^{2}- is\gamma'(x)\xi -\frac{1}{2}s(s+2)\beta''(x)\xi+\frac{1}{6}is(s-1)(s+4)\alpha'''(x)\xi\\
&& +is\alpha'(x)\xi+\frac{1}{6}is^{2}\alpha(x)\alpha'(x)\xi +is\alpha'(x)\xi^{3}-is\alpha'(x)\xi + \frac{2}{3}is\alpha(x)\alpha'(x)\xi +\bigg ( is\xi^{2}\left ( \beta'(x)+i\alpha''(x)-\frac{1}{2}is\alpha''(x)\right ) \\
&& +is\xi\bigg ( \gamma(x)'+\frac{3}{2}i\beta''(x)-\frac{1}{2}is\beta''(x) +\left (\frac{3}{2}+\frac{3}{4}s-\frac{1}{6}s^{2}\right )\alpha'''(x) - \alpha(x)\alpha'(x) -\frac{s}{6}\alpha(x)\alpha'(x) \bigg )\bigg ) \\
&& +\left (\frac{5}{2}s\xi^{2}\alpha''(x)+\frac{5}{2}s\xi\left (\beta''(x)+i\alpha'''(x)-\frac{1}{2}is\alpha'''(x)\right )\right ) -\frac{10}{3}is\xi\alpha'''(x) + \frac{1}{3}is\alpha(x)\alpha'(x)\xi \equiv 0\pmod{S^{0}}.
\end{IEEEeqnarray*}
Now arguing as before gives that $Lu=0$ is $L^{2}$-wellposed.

\section{Deferred calculations}\label{appB}

We collect here the calculations skipped in the proof of Theorem \ref{20241015thm2} and \ref{20241015thm3}.

\subsection{Calculations in the proof of Theorem \ref{20241015thm2} ($k=5$)}\label{subappB.1}

\subsubsection{$L \circ \Phi_{1} \equiv \Phi_{1} \circ L_{(1)}$} \label{subsubappB.1.1}

We will check that $L \circ \Phi_{1} \equiv \Phi_{1} \circ L_{(1)} \equiv 0 \pmod{S^{0}}$ for any $\ell \ge 1$, where $L, \Phi_{1},$ and $L_{(1)}$ are defined in \eqref{20241015eq2}, \eqref{20241015eq1}, and \eqref{20241015eq3}, respectively. Recall that we had \eqref{20241015eq4}, that is, 
\[
\Phi_{0, 1} \in S_{(\ell)}^{0}, \quad \partial_{x}\Phi_{0, 1} - \frac{1}{5}\left ( \frac{1}{\langle \xi \rangle_{\ell}} + \frac{\ell^{2}}{2\langle \xi \rangle_{\ell}^{3}} \right ) \tIm \big ( b(x)\big ) \in S_{(\ell)}^{-4}, \quad \Phi_{1} \in S_{(\ell)}^{0}
\] Having this in mind, one can calculate that
\begin{IEEEeqnarray*}{rCl}
L \circ \Phi_{1}&\equiv& (D_{x}^{5} + b(x)D_{x}^{3} + c(x)D_{x}^{2}+d(x)D_{x}+e(x)) \circ \Phi_{1} \\
&\equiv& \sum_{j=0}^{4} \frac{i^{-j}}{j!}\partial_{\xi}^{j}\left ( \xi^{5}+b(x)\xi^{3}+c(x)\xi^{2}+d(x)\xi +e(x)\right)\partial_{x}^{j}\Phi_{1} \\
& \equiv & e^{\Phi_{0, 1}}\left ( \xi^{5}+b(x)\xi^{3}+c(x)\xi^{2}+d(x)\xi+e(x)\right ) \\
&& + e^{\Phi_{0, 1}}\left ( \xi^{5}+b(x)\xi^{3}\right)\left ( \frac{1}{5\langle\xi\rangle_{\ell}} \partial_{\xi}\Phi_{0, 1}\tRe\big(b(x)\big) \right ) \\
&& + e^{\Phi_{0, 1}}\left ( \xi^{5}\right) \left (\frac{1}{5\langle\xi\rangle_{\ell}^{2}}\partial_{\xi}\Phi_{0,1}\left (c(x)+2ib'(x)\right ) -\frac{1}{10\langle\xi\rangle_{\ell}}\left ( \partial_{\xi}^{2}\Phi + \big(\partial_{\xi}\Phi\big)^{2} \right )i \tRe\big(b'(x)\big) \right ) \\
&& -ie^{\Phi_{0, 1}}\left ( 5\xi^{4}+3b(x)\xi^{2}\right )\partial_{x}\Phi_{0,1}\left ( 1+ \frac{1}{5\langle\xi\rangle_{\ell}} \partial_{\xi}\Phi_{0, 1}\tRe\big(b(x)\big) \right ) \\
&& -ie^{\Phi_{0, 1}}\left( 5\xi^{4}\right )\bigg ( \frac{1}{5\langle\xi\rangle_{\ell}} \partial_{x}\partial_{\xi}\Phi_{0, 1}\tRe\big(b(x)\big)+\frac{1}{5\langle\xi\rangle_{\ell}} \partial_{\xi}\Phi_{0, 1}\tRe\big(b'(x)\big)  \\
&& \qquad \qquad \quad \quad \;\; +\frac{1}{5\langle\xi\rangle_{\ell}^{2}}\partial_{\xi}\Phi_{0,1}\left (c'(x)+2ib''(x)\right ) -\frac{1}{10\langle\xi\rangle_{\ell}}\left ( \partial_{\xi}^{2}\Phi + \big(\partial_{\xi}\Phi\big)^{2} \right )i \tRe\big(b''(x)\big)\bigg) \\
&& - \frac{1}{2}e^{\Phi_{0,1}}\left ( 20\xi^{3} \right )\left ( \big ( \partial_{x}\Phi_{0,1}\big)^{2} + \partial_{xx}\Phi_{0,1}+\frac{1}{5\langle \xi\rangle_{\ell}}\partial_{\xi}\Phi_{0,1}\tRe\big(b''(x)\big)\right ) \\
&& + \frac{1}{6}ie^{\Phi_{0,1}}\left ( 60\xi^{2} \right ) \left (\partial_{xxx}\Phi_{0,1} \right ) \\
& \equiv & e^{\Phi_{0, 1}}\left ( \xi^{5}+b(x)\xi^{3}+c(x)\xi^{2}+d(x)\xi\right ) \\
&& + e^{\Phi_{0, 1}}\left ( \xi^{4}-\frac{\ell^{2}}{2}\xi^{2}+b(x)\xi^{2}\right)\left ( \frac{1}{5} \partial_{\xi}\Phi_{0, 1}\tRe\big(b(x)\big) \right ) \\
&& + e^{\Phi_{0, 1}} \left (\frac{1}{5}\xi^{3}\partial_{\xi}\Phi_{0,1}\left (c(x)+2ib'(x)\right ) -\frac{1}{10}\xi^{4}\left ( \partial_{\xi}^{2}\Phi + \big(\partial_{\xi}\Phi\big)^{2} \right )i \tRe\big(b'(x)\big) \right ) \\
&& -ie^{\Phi_{0, 1}}\left (  \tIm \big ( b(x)\big )\xi^{3} +\frac{1}{5}\xi^{2}\partial_{\xi}\Phi_{0,1}\tRe\big(b(x)\big)\tIm\big(b(x)\big)+ \frac{3}{5}b(x)\tIm\big(b(x)\big)\xi\right ) \\
&& -ie^{\Phi_{0, 1}}\bigg ( -\frac{1}{5}\xi \tIm \big ( b(x)\big ) \tRe\big(b(x)\big)+\xi^{3}\partial_{\xi}\Phi_{0, 1}\tRe\big(b'(x)\big) +\xi^{2}\partial_{\xi}\Phi_{0,1}\left (c'(x)+2ib''(x)\right ) \\
&& \qquad \qquad \, -\frac{1}{2}\xi^{3}\left ( \partial_{\xi}^{2}\Phi + \big(\partial_{\xi}\Phi\big)^{2} \right )i \tRe\big(b''(x)\big)\bigg) \\
&& - e^{\Phi_{0,1}}\left ( \frac{2}{5}\xi \left (  \tIm \big ( b(x)\big )\right)^{2} + 2 \tIm \big ( b'(x)\big )\xi^{2}+2\xi^{2}\partial_{\xi}\Phi_{0,1}\tRe\big(b'(x)\big)\right ) \\
&& + e^{\Phi_{0,1}}\left ( 2i \tIm \big ( b''(x)\big )\xi \right ) \\
&\equiv & e^{\Phi_{0,1}} \Bigg [ \xi^{5} + \left ( \tRe\big(b(x)\big) + \frac{1}{5}\tRe\big(b(x)\big)\xi\partial_{\xi}\Phi_{0,1}\right )\xi^{3} \\
&& \qquad \quad+ \bigg ( c(x) + \frac{1}{5}c(x)\xi\partial_{\xi}\Phi_{0,1}+\frac{2}{5}ib'(x)\xi\partial_{\xi}\Phi_{0,1}-\frac{1}{10}i\tRe\big(b'(x)\big)\xi^{2}\Big ( \partial_{\xi}^{2}\Phi_{0,1} + \big ( \partial_{\xi}\Phi_{0,1} \big )^{2} \Big ) \\
&& \qquad \quad\qquad - i\tRe\big(b'(x)\big)\xi\partial_{\xi}\Phi_{0,1} - 2\tIm\big(b'(x)\big)\bigg ) \xi^{2} \\
&& \qquad \quad + \bigg ( d(x) - \frac{\ell^{2}}{10}\tRe\big(b(x)\big)\xi\partial_{\xi}\Phi_{0,1} + \frac{1}{5}\Big ( \tRe\big(b(x)\big)\Big)^{2}\xi\partial_{\xi}\Phi_{0,1} - \frac{3}{5}ib(x)\tIm\big(b(x)\big)  \\
&&\qquad \quad \qquad + \frac{1}{5}i\tRe\big(b(x)\big)\tIm\big(b(x)\big)-ic'(x)\xi\partial_{\xi}\Phi_{0,1} +2b''(x)\xi \partial_{\xi}\Phi_{0,1}  \\
&& \qquad \quad \qquad - \frac{1}{2}\tRe\big(b''(x)\big)\xi^{2}\Big ( \partial_{\xi}^{2}\Phi_{0,1}+\big(\partial_{\xi}\Phi_{0,1}\big)^{2}\Big) - \frac{2}{5}\Big ( \tIm \big (b(x)\big)\Big)^{2} \\
&& \qquad \quad \qquad -2\tRe\big(b''(x)\big)\xi \partial_{\xi}\Phi_{0,1} + 2i\tIm\big(b''(x)\big) \bigg )\xi\Bigg ] \pmod{S^{0}}, \\
\Phi_{1} \circ L_{(1)} &\equiv &\Phi_{1} \circ \bigg ( D_{x}^{5} + \tRe\big(b(x)\big)D_{x}^{3} +\left ( c(x) - 2\tIm \big (b'(x) \big ) \right ) D_{x}^{2} + \bigg ( d(x) + 2i\tIm\big(b''(x)\big) \\
&& \qquad \quad- \frac{2}{5}\Big ( \tIm \big (b(x) \big ) \Big )^{2} - \frac{3}{5}ib(x)\tIm \big (b(x) \big ) + \frac{1}{5}i\tRe\big(b(x)\big ) \tIm \big (b(x) \big ) \bigg ) D_{x}\bigg ) \\
& \equiv & e^{\Phi_{0,1}} \bigg ( \xi^{5} + \tRe\big(b(x)\big)\xi^{3} + \left ( c(x)-2\tIm\big(b'(x)\big)\right )\xi^{2} + \bigg (  d(x) + 2i\tIm\big(b''(x)\big)  \\
&& \qquad \quad- \frac{2}{5}\Big ( \tIm \big (b(x) \big ) \Big )^{2}- \frac{3}{5}ib(x)\tIm \big (b(x) \big ) + \frac{1}{5}i\tRe\big(b(x)\big ) \tIm \big (b(x) \big )\bigg )\xi \bigg ) \\
&& + e^{\Phi_{0,1}}\left ( \frac{1}{5\langle \xi \rangle_{\ell}} \partial_{\xi}\Phi_{0,1}\tRe\big(b(x)\big)\right ) \left ( \xi^{5} + \tRe\big(b(x)\big)\xi^{3}\right ) \\
&& + e^{\Phi_{0,1}} \left ( \frac{1}{5\langle\xi\rangle_{\ell}^{2}}\partial_{\xi}\Phi_{0,1}\left (c(x)+2ib'(x)\right ) -\frac{1}{10\langle\xi\rangle_{\ell}}\left ( \partial_{\xi}^{2}\Phi + \big(\partial_{\xi}\Phi\big)^{2} \right )i \tRe\big(b'(x)\big) \right ) \left ( \xi^{5} \right ) \\
&& - i e^{\Phi_{0,1}}\partial_{\xi}\Phi_{0,1}\left ( \tRe\big(b'(x)\big)\xi^{3} + \left ( c'(x)-2\tIm\big(b''(x)\big) \right ) \xi^{2}\right ) \\
&& - \frac{1}{2}e^{\Phi_{0,1}}\Big ( \partial_{\xi}^{2}\Phi_{0,1} + \big ( \partial_{\xi}\Phi_{0,1}\big )^{2} \Big ) \left ( \tRe \big (b''(x)\big)\xi^{3} \right ) \\
& \equiv & e^{\Phi_{0,1}}\Bigg [ \xi^{5} + \left ( \tRe\big(b(x)\big) + \frac{1}{5}\tRe\big(b(x)\big)\xi\partial_{\xi}\Phi_{0,1}\right )\xi^{3}  + \bigg ( c(x)-2\tIm\big(b'(x)\big)+\frac{1}{5}c(x)\xi\partial_{\xi}\Phi_{0,1} \\
&&\qquad \quad+\frac{2}{5}ib'(x)\xi\partial_{\xi}\Phi_{0,1}-\frac{1}{10}i\tRe\big(b'(x)\big)\xi^{2}\Big(\partial_{\xi}^{2}\Phi_{0,1} + \big ( \partial_{\xi}\Phi_{0,1}\big)^{2}\Big)-i\tRe\big(b'(x)\big)\xi\partial_{\xi}\Phi_{0,1}\bigg )\xi^{2} \\
&& \qquad \quad+ \bigg ( d(x)+2i\tIm\big(b''(x)\big)-\frac{2}{5}\Big(\tIm\big(b(x)\big)\Big)^{2}-\frac{3}{5}ib(x)\tIm\big(b(x)\big)+\frac{1}{5}i\tRe\big(b(x)\big)\tIm\big(b(x)\big) \\
&& \qquad \quad \qquad-\frac{\ell^{2}}{10}\tRe\big(b(x)\big)\xi\partial_{\xi}\Phi_{0,1}+ \frac{1}{5}\Big(\tRe\big(b(x)\big)\Big)^{2}\xi\partial_{\xi}\Phi_{0,1}-ic'(x)\xi\partial_{\xi}\Phi_{0,1}\\
&& \qquad \quad \qquad+2i\tIm\big(b''(x)\big)\xi\partial_{\xi}\Phi_{0,1}-\frac{1}{2}\tRe\big(b''(x)\big)\xi^{2}\Big(\partial_{\xi}^{2}\Phi_{0,1}+\big(\partial_{\xi}\Phi_{0,1}\big)^{2}\Big)\bigg )\xi\Bigg ] \pmod{S^{0}}.
\end{IEEEeqnarray*}
Hence $L \circ \Phi_{1} \equiv \Phi_{1} \circ L_{(1)} \pmod{S^{0}}$.

\subsubsection{$L_{(1)} \circ \Phi_{2} \equiv \Phi_{2} \circ L_{(2)}$} \label{subsubappB.1.2}

We check that $L_{(1)} \circ \Phi_{2} \equiv \Phi_{2} \circ L_{(2)} \pmod{S^{0}}$ for any $\ell \ge 1$, where $L_{(1)}, \Phi_{2}$, and $L_{(2)}$ are defined in \eqref{20241015eq5}, \eqref{20241015eq6}, and \eqref{20241015eq7} respectively. Note that by \eqref{20241015eq8} we have \[
\Phi_{0, 2} \in S_{(\ell)}^{0}, \quad \partial_{x}\Phi_{0,2}-\frac{1}{5\langle \xi \rangle_{\ell}^{2}}\tIm \big (c_{(1)}(x) \big ) \in S_{(\ell)}^{-4}, \quad \Phi_{2} \in S_{\ell}^{(0)}.
\]
Then we can calculate that
\begin{IEEEeqnarray*}{rCl}
L_{(1)} \circ \Phi_{2} & \equiv & \left (D_{x}^{5}+A_{(1)} +c_{(1)}(x)D_{x}^{2}+d_{(1)}(x)D_{x}\right ) \circ \Phi_{2} \\
& \equiv & e^{\Phi_{0,2}}\bigg [ \left ((\xi^{5} + \tRe\big(b(x)\big)\xi^{3} + \left ( c_{(1)}(x)-\frac{3}{2}i\tRe\big(b'(x)\big)\right )\xi^{2} + d_{(1)}(x)\xi\right ) \\
&& \qquad \quad + \left ( \xi^{5} + \tRe\big(b(x)\big)\xi^{3} \right ) \frac{1}{5\langle\xi\rangle_{\ell}}\tRe\big(b(x)\big)\partial_{\xi}\Phi_{0,2} \\
&& \qquad \quad+ \left ( \xi^{5} \right ) \bigg ( \frac{1}{10\langle\xi\rangle_{\ell}^{2}}\left (2c_{(1)}(x)+i\tRe\big(b'(x)\big)\right )\partial_{\xi}\Phi_{0,2} - i \partial_{x}\Phi_{0,2}\partial_{\xi}\Phi_{0,2}  \\
&& \qquad \quad \qquad\qquad- \frac{1}{10\langle\xi\rangle_{\ell}}i\tRe\big(b'(x)\big)\Big ( \partial_{\xi}^{2}\Phi_{0,2}+\big(\partial_{\xi}\Phi_{0,2}\big)^{2}\Big)\bigg ) \\
&& \qquad \quad - i \left ( 5\xi^{4}\right )\bigg ( \partial_{x}\Phi_{0,2} + \frac{1}{5\langle \xi\rangle_{\ell}}\tRe\big(b'(x)\big)\partial_{\xi}\Phi_{0,2} + \frac{1}{10\langle\xi\rangle_{\ell}^{2}}\left (2c_{(1)}'(x)+i\tRe\big(b''(x)\big)\right )\partial_{\xi}\Phi_{0,2} \\
&& \qquad \quad \qquad \qquad \;\; - i \partial_{x}^{2}\Phi_{0,2}\partial_{\xi}\Phi_{0,2}  - \frac{1}{10\langle\xi\rangle_{\ell}}i\tRe\big(b''(x)\big)\Big ( \partial_{\xi}^{2}\Phi_{0,2}+\big(\partial_{\xi}\Phi_{0,2}\big)^{2}\Big)\bigg ) \\
&& \qquad \quad - \frac{1}{2}\left ( 20\xi^{3}\right)\left ( \partial_{x}^{2}\Phi_{0,2}+\frac{1}{5\langle \xi \rangle_{\ell}}\tRe\big(b''(x)\big)\partial_{\xi}\Phi_{0,2}\right )\bigg ] \\
& \equiv & e^{\Phi_{0,2}} \bigg [ \left ((\xi^{5} + \tRe\big(b(x)\big)\xi^{3} -\frac{3}{2}i\tRe\big(b'(x)\big)\xi^{2} \right ) + \left ( \xi^{5} + \tRe\big(b(x)\big)\xi^{3} \right ) \frac{1}{5\langle\xi\rangle_{\ell}}\tRe\big(b(x)\big)\partial_{\xi}\Phi_{0,2} \\
&& \qquad \quad+ \left ( \xi^{5} \right ) \bigg ( \frac{1}{10\langle\xi\rangle_{\ell}^{2}}\left (2c_{(1)}(x)+i\tRe\big(b'(x)\big)\right )\partial_{\xi}\Phi_{0,2} - i \partial_{x}\Phi_{0,2}\partial_{\xi}\Phi_{0,2} \\
&& \qquad \quad \qquad \qquad - \frac{1}{10\langle\xi\rangle_{\ell}}i\tRe\big(b'(x)\big)\Big ( \partial_{\xi}^{2}\Phi_{0,2}+\big(\partial_{\xi}\Phi_{0,2}\big)^{2}\Big)\bigg ) + \tRe\big(c_{(1)}(x)\big)\xi^{2} \\
&& \qquad \quad +d_{(1)}(x)\xi  - i\tRe\big(b'(x)\big)\xi^{3}\partial_{\xi}\Phi_{0,2} - ic_{(1)}'(x)\xi^{2}\partial_{\xi}\Phi_{0,2}- \tIm\big(c_{(1)}'(x)\big)\xi^{2}\partial_{\xi}\Phi_{0,2} \\
&& \qquad \quad  - \frac{1}{2}\tRe\big(b''(x)\big)\xi^{3}\Big ( \partial_{\xi}^{2}\Phi_{0,2}+\big(\partial_{\xi}\Phi_{0,2}\big)^{2}\Big ) - 2\tIm\big(c_{(1)}'(x)\big)\xi - \frac{3}{2}\tRe\big(b''(x)\big)\xi^{2}\partial_{\xi}\Phi_{0,2} \bigg ] \pmod{S^{0}}, \\
\Phi_{2} \circ L_{(2)} &\equiv & \Phi_{2} \circ \left ( D_{x}^{5} + A_{(1)} + \tRe\big(c_{(1)}(x)\big)D_{x}^{2}+\left ( d_{(1)}(x)-2\tIm\big(c_{(1)}'(x)\big)\right )D_{x} \right ) \\
&\equiv& e^{\Phi_{0,2}} \bigg [ \left ( \xi^{5} + \tRe\big(b(x)\big)\xi^{3} + \left ( \tRe\big(c_{(1)}(x)\big ) - \frac{3}{2}i\tRe\big(b'(x)\big ) \right )\xi^{2} + \left (d_{(1)}(x)-2\tIm\big(c_{(1)}'(x)\big)\right )\xi \right ) \\
&& \qquad \quad + \left ( \xi^{5} + \tRe\big(b(x)\big)\xi^{3} \right ) \frac{1}{5\langle\xi\rangle_{\ell}}\tRe\big(b(x)\big)\partial_{\xi}\Phi_{0,2} \\
&& \qquad \quad+ \left ( \xi^{5} \right ) \bigg ( \frac{1}{10\langle\xi\rangle_{\ell}^{2}}\left (2c_{(1)}(x)+i\tRe\big(b'(x)\big)\right )\partial_{\xi}\Phi_{0,2} - i \partial_{x}\Phi_{0,2}\partial_{\xi}\Phi_{0,2}  \\
&& \qquad \quad \qquad \qquad- \frac{1}{10\langle\xi\rangle_{\ell}}i\tRe\big(b'(x)\big)\Big ( \partial_{\xi}^{2}\Phi_{0,2}+\big(\partial_{\xi}\Phi_{0,2}\big)^{2}\Big)\bigg ) \\
&& \qquad \quad - i \left ( \partial_{\xi}\Phi_{0,2}\right )\left ( \tRe\big(b'(x)\big)\xi^{3} + \left ( \tRe\big(c_{(1)}'(x)\big ) - \frac{3}{2}i\tRe\big(b''(x)\big ) \right )\xi^{2} \right ) \\
&& \qquad \quad - \frac{1}{2} \Big( \partial_{\xi}^{2}\Phi_{0,2} + \big (\partial_{\xi}\Phi_{0,2}\big )^{2} \Big )\left ( \tRe\big(b''(x)\big)\xi^{3} \right )\bigg] \\
& \equiv & e^{\Phi_{0,2}} \bigg [ \left ((\xi^{5} + \tRe\big(b(x)\big)\xi^{3} -\frac{3}{2}i\tRe\big(b'(x)\big)\xi^{2} \right ) + \left ( \xi^{5} + \tRe\big(b(x)\big)\xi^{3} \right ) \frac{1}{5\langle\xi\rangle_{\ell}}\tRe\big(b(x)\big)\partial_{\xi}\Phi_{0,2} \\
&& \qquad \quad+ \left ( \xi^{5} \right ) \bigg ( \frac{1}{10\langle\xi\rangle_{\ell}^{2}}\left (2c_{(1)}(x)+i\tRe\big(b'(x)\big)\right )\partial_{\xi}\Phi_{0,2} - i \partial_{x}\Phi_{0,2}\partial_{\xi}\Phi_{0,2} \\
&& \qquad \quad \qquad \qquad - \frac{1}{10\langle\xi\rangle_{\ell}}i\tRe\big(b'(x)\big)\Big ( \partial_{\xi}^{2}\Phi_{0,2}+\big(\partial_{\xi}\Phi_{0,2}\big)^{2}\Big)\bigg ) + \tRe\big(c_{(1)}(x)\big)\xi^{2}\\ 
&& \qquad \quad  + \left ( d_{(1)}(x)-2\tIm\big(c_{(1)}'(x)\big)\right )\xi -i\tRe\big(b'(x)\big)\xi^{3}\partial_{\xi}\Phi_{0,2} - i \tRe\big(c_{(1)}'(x)\big)\xi^{2}\partial_{\xi}\Phi_{0,2} \\
&& \qquad \quad-\frac{3}{2}\tRe\big(b''(x)\big)\xi^{2}\partial_{\xi}\Phi_{0,2} - \frac{1}{2}\tRe\big(b''(x)\big)\xi^{3}\Big ( \partial_{\xi}^{2}\Phi_{0,2}+\big(\partial_{\xi}\Phi_{0,2}\big)^{2}\Big)\bigg ] \pmod{S^{0}}.
\end{IEEEeqnarray*}
Hence $L_{(1)} \circ \Phi_{2} \equiv \Phi_{2} \circ L_{(2)} \pmod{S^{0}}$.

\subsubsection{$L_{(2)} \circ \Phi_{3} \equiv \Phi_{3} \circ L_{(3)}$}\label{subsubappB.1.3}

We check that $L_{(2)} \circ\Phi_{3} \equiv \Phi_{3} \circ L_{(3)} \pmod{S^{0}}$ for all $\ell \ge 1$, where $L_{(2)}, \Phi_{3},$ and $L_{(3)}$ are defined in \eqref{20241015eq9}, \eqref{20241015eq10}, and \eqref{20241015eq11} respectively. Note that we have \eqref{20241015eq12}, that is, \[
\Phi_{0,3} \in S_{(\ell)}^{0}, \quad \partial_{x}\Phi_{0,3} - \frac{1}{5\langle \xi \rangle_{\ell}^{3}}\tIm \big (d_{(2)}(x) \big ) \in S_{(\ell)}^{-4}, \quad \Phi_{3} \in S_{(\ell)}^{0}.
\] Then,
\begin{IEEEeqnarray*}{rCl}
\IEEEeqnarraymulticol{3}{l}{L_{(2)} \circ \Phi_{3} - \Phi_{3} \circ L_{(3)}}\\
 &\equiv & \left (D_{x}^{5}+A_{(2)}+d_{(2)}(x)D_{x}\right ) \circ \Phi_{3}- \Phi_{3} \circ  \left (D_{x}^{5}+A_{(2)}+\tRe\big(d_{(2)}(x)\big)D_{x}\right ) \\
& \equiv & e^{\Phi_{0,3}} \bigg [\tIm\big(d_{(2)}(x)\big)\xi - i\left (5\xi^{4}\right )\bigg ( \partial_{x}\Phi_{0,3} + \frac{1}{5\langle \xi \rangle_{\ell}} \tRe\big(b'(x)\big)\partial_{\xi}\Phi_{0,3}\\
&& \qquad \quad  + \frac{1}{5\langle \xi \rangle_{\ell}^{2}} \left ( \tRe\big(c_{(1)}'(x)\big) + \frac{1}{2}i\tRe\big(b''(x)\big) \right )\partial_{\xi}\Phi_{0,3}  - \frac{1}{10\langle \xi \rangle_{\ell}}i\tRe\big(b''(x)\big)\Big ( \partial_{\xi}^{2}\Phi_{0,3} + \big ( \partial_{\xi}\Phi_{0,3}\big )^{2} \Big )\bigg )  \\
&& \qquad \quad - \frac{1}{2}\left ( 20\xi^{3} \right ) \left ( \frac{1}{5\langle \xi \rangle_{\ell}}\tRe\big(b''(x)\big)\partial_{\xi}\Phi_{0,3} \right )\\
&& \qquad \quad+ i \left ( \partial_{\xi}\Phi_{0,3} \right ) \left ( \tRe\big(b'(x)\big)\xi^{3} - \frac{3}{2}i\tRe\big(b''(x)\big)\xi^{2} + \tRe\big(c_{(1)}'(x)\big)\xi^{2} \right ) \\
&& \qquad \quad + \frac{1}{2} \Big ( \partial_{\xi}^{2}\Phi_{0,3}+\big(\partial_{\xi}\Phi_{0,3}\big)^{2}\Big ) \left ( \tRe\big(b''(x)\big)\xi^{3} \right )\bigg ] \\
&\equiv & e^{\Phi_{0,3}} \bigg [ -i\tRe\big(b'(x)\big)\xi^{3}\partial_{\xi}\Phi_{0,3} - i \left ( \tRe\big(c_{(1)}'(x)\big) + \frac{1}{2}i\tRe\big(b''(x)\big) \right )\xi^{2} \partial_{\xi}\Phi_{0,3} \\
&& \qquad \quad - \frac{1}{2}\tRe\big(b''(x)\big)\xi^{3}\Big ( \partial_{\xi}^{2}\Phi_{0,3}+\big(\partial_{\xi}\Phi_{0,3}\big)^{2}\Big )- 2\tRe\big(b''(x)\big)\xi^{2}\partial_{\xi}\Phi_{0,3} + i\tRe\big(b'(x)\big)\xi^{3}\partial_{\xi}\Phi_{0,3}  \\
&& \qquad \quad + \frac{3}{2}\tRe\big(b''(x)\big)\xi^{2}\partial_{\xi}\Phi_{0,3} + i\tRe\big(c_{(1)}'(x)\big)\xi^{2}\partial_{\xi}\Phi_{0,3}\\
&& \qquad \quad + \frac{1}{2} \Big ( \partial_{\xi}^{2}\Phi_{0,3}+\big(\partial_{\xi}\Phi_{0,3}\big)^{2}\Big ) \left ( \tRe\big(b''(x)\big)\xi^{3} \right )  \bigg ] \equiv 0 \pmod{S^{0}}.
\end{IEEEeqnarray*}
Hence the claim.

\subsection{Calculations in the proof of Theorem \ref{20241015thm3} ($k=6$)}\label{subappB.2}

\subsubsection{$L \circ \Phi_{1} \equiv \Phi_{1} \circ L_{(1)}$}\label{subsubappB.2.1}

We will check that $L \circ \Phi_{1} \equiv \Phi_{1} \circ L_{(1)} \equiv 0 \pmod{S^{0}}$ for any $\ell \ge 1$, where $L, \Phi_{1},$ and $L_{(1)}$ are defined in \eqref{20241015eq13}, \eqref{20241015eq14}, and \eqref{20241015eq15} respectively. Note that we have \eqref{20241015eq16}, that is, 
\[
\Phi_{0,1} \in S_{(\ell)}^{0}, \quad \partial_{x}\Phi_{0,1} - \frac{1}{6}\left ( \frac{1}{\langle \xi \rangle_{\ell}} + \frac{\ell^{2}}{2\langle \xi \rangle_{\ell}^{3}} \right ) \tIm \big (b(x) \big ) \in S_{(\ell)}^{-5}, \quad \Phi_{1} \in S_{(\ell)}^{0}.
\]
Then, one can just calculate
\begin{IEEEeqnarray*}{rCl}
\IEEEeqnarraymulticol{3}{l}{L \circ \Phi_{1}} \\
& \equiv &\left ( D_{x}^{6}+b(x)D_{x}^{4}+c(x)D_{x}^{3}+d(x)D_{x}^{2}+e(x)D_{x}+f(x) \right ) \circ \Phi_{1} \\
& \equiv &  \left ( \xi^{6} + b(x) \xi^{4} + c(x) \xi^{3} + d(x) \xi^{2} + e(x)\xi + f(x) \right ) \Phi_{1} - i \left ( 6\xi^{5} + 4b(x)\xi^{3}+3c(x)\xi^{2} \right )\partial_{x}\Phi_{1}  \\
&& - \frac{1}{2} \left ( 30\xi^{4} + 12b(x)\xi^{2} \right )\partial_{x}^{2}\Phi_{1}+ \frac{1}{6} i \left ( 120\xi^{3} \right ) \partial_{x}^{3}\Phi_{1}+ \frac{1}{24} \left ( 360\xi^{2} \right ) \partial_{x}^{4}\Phi_{1} \\
& \equiv & e^{\Phi_{0,1}} \Bigg [ \left ( \xi^{6} + b(x) \xi^{4} + c(x) \xi^{3} + d(x) \xi^{2} + e(x)\xi + f(x) \right ) \\
&& \qquad \quad+ \left ( \xi^{6}+b(x)\xi^{4}+c(x)\xi^{3} \right ) \frac{1}{6\langle \xi \rangle_{\ell}}\tRe\big(b(x)\big)\partial_{\xi}\Phi_{0,1} \\
&& \qquad \quad+ \left ( \xi^{6} + b(x)\xi^{4} \right )\left ( \frac{1}{6\langle \xi \rangle_{\ell}^{2}} \left ( c(x) + \frac{5}{2}ib'(x) \right )\partial_{\xi}\Phi_{0,1}- \frac{1}{12\langle \xi \rangle_{\ell}}i\tRe\big(b'(x)\big)\Big (\partial_{\xi}^{2}\Phi_{0,1} + \big(\partial_{\xi}\Phi_{0,1}\big)^{2}\Big )\right ) \\
&& \qquad \quad+ \left (\xi^{6}\right ) \bigg ( \frac{1}{6}\frac{\ell^{2}}{2\langle \xi \rangle_{\ell}^{3}} \tRe\big(b(x)\big)\partial_{\xi}\Phi_{0, 1} \\
&& \qquad \quad \qquad \qquad+ \frac{1}{\langle \xi \rangle_{\ell}^{3}} \left ( -\frac{35}{72} b''(x) + \frac{5}{12}ic'(x) + \frac{1}{6} d(x) - \frac{1}{24}b(x)^{2} - \frac{1}{36}\Big ( \tRe \big (b(x) \big ) \Big )^{2}\right ) \partial_{\xi}\Phi_{0,1} \\
&& \qquad \quad \qquad \qquad+ \frac{1}{\langle \xi\rangle_{\ell}^{2}}\left ( -\frac{1}{12}i c'(x) +\frac{5}{24}b''(x)
+ \frac{1}{72}\Big ( \tRe\big(b(x)\big)\Big )^{2}\right )\Big ( \partial_{\xi}^{2}\Phi_{0,1} + \big ( \partial_{\xi}\Phi_{0,1} \big )^{2}\Big ) \\
&& \qquad \quad \qquad \qquad- \frac{1}{\langle \xi \rangle_{\ell}}\frac{1}{36}\tRe\big(b''(x)\big )\Big ( \partial_{\xi}^{3}\Phi_{0,1} + 3\partial_{\xi}^{2}\Phi_{0,1}\partial_{\xi}\Phi_{0,1} + \big ( \partial_{\xi}\Phi_{0,1} \big )^{3} \Big )\bigg ) \\
&& \qquad \quad - i\left (6\xi^{5}+4b(x)\xi^{3}+3c(x)\xi^{2}\right ) \partial_{x}\Phi_{0,1} \\
&&  \qquad \quad- i \left (6\xi^{5}+4b(x)\xi^{3} \right ) \frac{1}{6\langle \xi \rangle_{\ell}}\tRe\big(b'(x)\big ) \partial_{\xi}\Phi_{0,1} \\
&&  \qquad \quad- i \left ( 6\xi^{5} \right ) \bigg (\frac{1}{6\langle \xi \rangle_{\ell}}\partial_{\xi}\partial_{x}\Phi_{0,1}\tRe\big(b(x)\big) + \frac{1}{6\langle \xi \rangle_{\ell}^{2}}\left ( c'(x) + \frac{5}{2}ib''(x) \right )\partial_{\xi}\Phi_{0,1} \\
&& \qquad \quad \;\;\qquad\qquad- \frac{1}{12\langle \xi \rangle_{\ell}}i\tRe\big(b''(x)\big ) \Big (\partial_{\xi}^{2}\Phi_{0,1} + \big ( \partial_{\xi}\Phi_{0,1} \big )^{2} \Big ) \\
&& \qquad \quad \;\;\qquad \qquad + \frac{\ell^{2}}{12\langle \xi \rangle_{\ell}^{3}}\tRe\big(b'(x)\big)\partial_{\xi}\Phi_{0,1}+ \frac{1}{6\langle \xi \rangle_{\ell}^{2}}\partial_{\xi}\partial_{x}\Phi_{0,1} \left ( c(x) + \frac{5}{2}ib'(x) \right ) \\
&& \qquad \quad \; \; \qquad \qquad - \frac{1}{12\langle \xi \rangle_{\ell}} i \tRe\big(b'(x) \big ) \Big ( \partial_{\xi}^{2}\partial_{x}\Phi_{0,1} + 2\partial_{\xi}\partial_{x}\Phi_{0,1} \partial_{\xi}\Phi_{0,1}\Big ) \\
&&  \qquad \quad\qquad \qquad \;\;+ \frac{1}{\langle \xi \rangle_{\ell}^{3}} \left ( -\frac{35}{72} b'''(x) + \frac{5}{12}ic''(x) + \frac{1}{6} d'(x) - \frac{1}{12}b(x)b'(x) - \frac{1}{18}\tRe \big (b(x) \big ) \tRe\big(b'(x)\big ) \right ) \partial_{\xi}\Phi_{0,1} \\
&& \qquad \quad \qquad \qquad \;\; + \frac{1}{\langle \xi \rangle_{\ell}^{2}}\left ( -\frac{1}{12}i c''(x) +\frac{5}{24}b'''(x)
+ \frac{1}{36}\tRe\big(b(x)\big)\tRe\big(b'(x)\big)\right ) \Big (\partial_{\xi}^{2}\Phi_{0,1} + \big ( \partial_{\xi}\Phi_{0,1} \big )^{2} \Big ) \\
&& \qquad \quad \qquad \qquad \;\;- \frac{1}{\langle \xi \rangle_{\ell}}\frac{1}{36}\tRe\big(b'''(x)\big) \Big ( \partial_{\xi}^{3}\Phi_{0,1} + 3\partial_{\xi}^{2}\Phi_{0,1}\partial_{\xi}\Phi_{0,1} + \big ( \partial_{\xi}\Phi_{0,1} \big )^{3} \Big ) +\frac{1}{6\langle \xi\rangle_{\ell}}\partial_{x}\Phi_{0,1}\tRe\big(b(x)\big)\partial_{\xi}\Phi_{0,1} \\
&& \qquad \quad \qquad \qquad \;\;+ \frac{1}{6\langle \xi \rangle_{\ell}^{2}}\partial_{x}\Phi_{0,1} \left ( c(x)+ \frac{5}{2}ib'(x) \right ) \partial_{\xi}\Phi_{0,1}- \frac{1}{12\langle \xi \rangle_{\ell}} i \tRe\big(b'(x) \big ) \partial_{x}\Phi_{0,1} \Big ( \partial_{\xi}^{2}\Phi_{0,1} + \big ( \partial_{\xi}\Phi_{0,1} \big )^{2} \Big )\bigg ) \\
&& \qquad \quad - \frac{1}{2} \left ( 30\xi^{4} + 12b(x)\xi^{2} \right ) \partial_{x}^{2}\Phi_{0,1} \\
&& \qquad \quad - \frac{1}{2} \left ( 30\xi^{4} \right ) \bigg ( \big ( \partial_{x}\Phi_{0,1} \big )^{2} + \frac{1}{6\langle \xi \rangle_{\ell}}\tRe\big(b''(x)\big)\partial_{\xi}\Phi_{0,1} + \frac{1}{3\langle \xi \rangle_{\ell}} \tRe\big(b'(x)\big ) \partial_{\xi}\partial_{x}\Phi_{0,1} + \frac{1}{6\langle \xi \rangle_{\ell}} \tRe\big(b(x)\big)\partial_{\xi}\partial_{x}^{2}\Phi_{0,1} \\
&& \qquad \quad \qquad \qquad \quad\; + \frac{1}{6\langle \xi \rangle_{\ell}^{2}}\left ( c''(x) + \frac{5}{2}ib'''(x) \right ) \partial_{\xi}\Phi_{0,1} - \frac{1}{12\langle \xi \rangle_{\ell}} i \tRe\big(b'''(x) \big ) \Big ( \partial_{\xi}^{2}\Phi_{0,1} + \big ( \partial_{\xi}\Phi_{0,1} \big )^{2} \Big )\\
&& \qquad \quad \qquad \qquad \quad \; + \frac{1}{6\langle \xi \rangle_{\ell}} \tRe\big(b(x)\big ) \partial_{x}^{2}\Phi_{0,1} \partial_{\xi}\Phi_{0,1} + \frac{1}{3\langle \xi \rangle_{\ell}} \tRe\big(b'(x) \big ) \partial_{x}\Phi_{0,1} \partial_{\xi}\Phi_{0,1} \bigg ) \\
&& \qquad \quad + \frac{1}{6}i \left ( 120\xi^{3} \right )\left ( \partial_{x}^{3}\Phi_{0,1} + 3\partial_{x}\Phi_{0,1}\partial_{x}^{2}\Phi_{0,1} + \frac{1}{6\langle \xi \rangle_{\ell}}\tRe\big(b'''(x)\big)\partial_{\xi}\Phi_{0,1} \right ) \\
&& \qquad \quad + \frac{1}{24} \left ( 360\xi^{2} \right )\left ( \partial_{x}^{4}\Phi_{0,1} \right ) \Bigg ] \\
& \equiv & e^{\Phi_{0,1}} \Bigg [ \left ( \xi^{6} + b(x) \xi^{4} + c(x) \xi^{3} + d(x) \xi^{2} + e(x)\xi + f(x) \right ) \\
&& \qquad \quad+ \left ( \xi^{6}+b(x)\xi^{4}+c(x)\xi^{3} \right ) \frac{1}{6\langle \xi \rangle_{\ell}}\tRe\big(b(x)\big)\partial_{\xi}\Phi_{0,1} \\
&& \qquad \quad+ \left ( \xi^{6} + b(x)\xi^{4} \right )\left ( \frac{1}{6\langle \xi \rangle_{\ell}^{2}} \left ( c(x) + \frac{5}{2}ib'(x) \right )\partial_{\xi}\Phi_{0,1}- \frac{1}{12\langle \xi \rangle_{\ell}}i\tRe\big(b'(x)\big)\Big (\partial_{\xi}^{2}\Phi_{0,1} + \big(\partial_{\xi}\Phi_{0,1}\big)^{2}\Big )\right ) \\
&& \qquad \quad+ \left (\xi^{6}\right ) \bigg ( \frac{1}{6}\frac{\ell^{2}}{2\langle \xi \rangle_{\ell}^{3}} \tRe\big(b(x)\big)\partial_{\xi}\Phi_{0, 1} \\
&& \qquad \quad \qquad \qquad+ \frac{1}{\langle \xi \rangle_{\ell}^{3}} \left ( -\frac{35}{72} b''(x) + \frac{5}{12}ic'(x) + \frac{1}{6} d(x) - \frac{1}{24}b(x)^{2} - \frac{1}{36}\Big ( \tRe \big (b(x) \big ) \Big )^{2}\right ) \partial_{\xi}\Phi_{0,1} \\
&& \qquad \quad \qquad \qquad+ \frac{1}{\langle \xi\rangle_{\ell}^{2}}\left ( -\frac{1}{12}i c'(x) +\frac{5}{24}b''(x)
+ \frac{1}{72}\Big ( \tRe\big(b(x)\big)\Big )^{2}\right )\Big ( \partial_{\xi}^{2}\Phi_{0,1} + \big ( \partial_{\xi}\Phi_{0,1} \big )^{2}\Big ) \\
&& \qquad \quad \qquad \qquad- \frac{1}{\langle \xi \rangle_{\ell}}\frac{1}{36}\tRe\big(b''(x)\big )\Big ( \partial_{\xi}^{3}\Phi_{0,1} + 3\partial_{\xi}^{2}\Phi_{0,1}\partial_{\xi}\Phi_{0,1} + \big ( \partial_{\xi}\Phi_{0,1} \big )^{3} \Big )\bigg ) \\
&& \qquad \quad - i\left (6\xi^{5}+4b(x)\xi^{3}+3c(x)\xi^{2}\right ) \frac{1}{6}\left ( \frac{1}{\langle \xi \rangle_{\ell}} + \frac{\ell^{2}}{2\langle \xi \rangle_{\ell}^{3}} \right ) \tIm \big (b(x) \big ) \\
&&  \qquad \quad- i \left (6\xi^{5}+4b(x)\xi^{3} \right ) \frac{1}{6\langle \xi \rangle_{\ell}}\tRe\big(b'(x)\big ) \partial_{\xi}\Phi_{0,1} \\
&&  \qquad \quad- i \left ( 6\xi^{5} \right ) \bigg (-\frac{1}{36\langle \xi \rangle_{\ell}}\frac{\xi}{\langle \xi \rangle_{\ell}^{3}}\tIm\big(b(x)\big)\tRe\big(b(x)\big) + \frac{1}{6\langle \xi \rangle_{\ell}^{2}}\left ( c'(x) + \frac{5}{2}ib''(x) \right )\partial_{\xi}\Phi_{0,1} \\
&& \qquad \quad \;\;\qquad\qquad- \frac{1}{12\langle \xi \rangle_{\ell}}i\tRe\big(b''(x)\big ) \Big (\partial_{\xi}^{2}\Phi_{0,1} + \big ( \partial_{\xi}\Phi_{0,1} \big )^{2} \Big ) \\
&& \qquad \quad \;\;\qquad \qquad + \frac{\ell^{2}}{12\langle \xi \rangle_{\ell}^{3}}\tRe\big(b'(x)\big)\partial_{\xi}\Phi_{0,1}- \frac{\xi}{36\langle \xi \rangle_{\ell}^{5}}\tIm\big(b(x)\big) \left ( c(x) + \frac{5}{2}ib'(x) \right ) \\
&& \qquad \quad \; \; \qquad \qquad - \frac{1}{12\langle \xi \rangle_{\ell}} i \tRe\big(b'(x) \big ) \Big ( -\frac{1}{6}\tIm\big(b(x)\big)\left ( \frac{1}{\langle \xi \rangle_{\ell}^{3}} - \frac{3\xi^{2}}{\langle \xi \rangle_{\ell}^{5}}\right ) -\frac{1}{3}\frac{\xi}{\langle \xi \rangle_{\ell}^{3}}\tIm\big(b(x)\big) \partial_{\xi}\Phi_{0,1}\Big ) \\
&&  \qquad \quad\qquad \qquad \;\;+ \frac{1}{\langle \xi \rangle_{\ell}^{3}} \left ( -\frac{35}{72} b'''(x) + \frac{5}{12}ic''(x) + \frac{1}{6} d'(x) - \frac{1}{12}b(x)b'(x) - \frac{1}{18}\tRe \big (b(x) \big ) \tRe\big(b'(x)\big ) \right ) \partial_{\xi}\Phi_{0,1} \\
&& \qquad \quad \qquad \qquad \;\; + \frac{1}{\langle \xi \rangle_{\ell}^{2}}\left ( -\frac{1}{12}i c''(x) +\frac{5}{24}b'''(x)
+ \frac{1}{36}\tRe\big(b(x)\big)\tRe\big(b'(x)\big)\right ) \Big (\partial_{\xi}^{2}\Phi_{0,1} + \big ( \partial_{\xi}\Phi_{0,1} \big )^{2} \Big ) \\
&& \qquad \quad \qquad \qquad \;\;- \frac{1}{\langle \xi \rangle_{\ell}}\frac{1}{36}\tRe\big(b'''(x)\big) \Big ( \partial_{\xi}^{3}\Phi_{0,1} + 3\partial_{\xi}^{2}\Phi_{0,1}\partial_{\xi}\Phi_{0,1} + \big ( \partial_{\xi}\Phi_{0,1} \big )^{3} \Big ) \\
&& \qquad \quad \qquad \qquad \;\;+\frac{1}{36\langle \xi\rangle_{\ell}^{2}}\tIm \big ( b(x) \big )\tRe\big(b(x)\big)\partial_{\xi}\Phi_{0,1} \\
&& \qquad \quad \qquad \qquad \;\;+ \frac{1}{36\langle \xi \rangle_{\ell}^{2}}\left ( \frac{1}{\langle \xi \rangle_{\ell}} + \frac{\ell^{2}}{2\langle \xi \rangle_{\ell}^{3}} \right ) \tIm \big (b(x) \big ) \left ( c(x)+ \frac{5}{2}ib'(x) \right ) \partial_{\xi}\Phi_{0,1} \\
&&  \qquad \quad\qquad \qquad \;\;- \frac{1}{12\langle \xi \rangle_{\ell}} i \tRe\big(b'(x) \big ) \frac{1}{6}\left ( \frac{1}{\langle \xi \rangle_{\ell}} + \frac{\ell^{2}}{2\langle \xi \rangle_{\ell}^{3}} \right ) \tIm \big (b(x) \big ) \Big ( \partial_{\xi}^{2}\Phi_{0,1} + \big ( \partial_{\xi}\Phi_{0,1} \big )^{2} \Big )\bigg ) \\
&& \qquad \quad - \frac{1}{2} \left ( 30\xi^{4} + 12b(x)\xi^{2} \right ) \frac{1}{6}\left ( \frac{1}{\langle \xi \rangle_{\ell}} + \frac{\ell^{2}}{2\langle \xi \rangle_{\ell}^{3}} \right ) \tIm \big (b'(x) \big ) \\
&& \qquad \quad - \frac{1}{2} \left ( 30\xi^{4} \right ) \bigg ( \left ( \frac{1}{6\langle \xi \rangle_{\ell}}  \tIm \big (b(x) \big ) \right )^{2} + \frac{1}{6\langle \xi \rangle_{\ell}}\tRe\big(b''(x)\big)\partial_{\xi}\Phi_{0,1} - \frac{\xi}{18\langle \xi \rangle_{\ell}^{4}} \tRe\big(b'(x)\big )\tIm\big(b(x)\big) \\
&& \qquad \quad \qquad \qquad \quad \;- \frac{\xi}{36\langle \xi \rangle_{\ell}^{4}} \tRe\big(b(x)\big)\tIm\big(b'(x)\big) \\
&& \qquad \quad \qquad \qquad \quad\; + \frac{1}{6\langle \xi \rangle_{\ell}^{2}}\left ( c''(x) + \frac{5}{2}ib'''(x) \right ) \partial_{\xi}\Phi_{0,1} - \frac{1}{12\langle \xi \rangle_{\ell}} i \tRe\big(b'''(x) \big ) \Big ( \partial_{\xi}^{2}\Phi_{0,1} + \big ( \partial_{\xi}\Phi_{0,1} \big )^{2} \Big )\\
&& \qquad \quad \qquad \qquad \quad \; + \frac{1}{36\langle \xi \rangle_{\ell}^{2}} \tRe\big(b(x)\big ) \tIm \big (b'(x) \big ) \partial_{\xi}\Phi_{0,1} + \frac{1}{18\langle \xi \rangle_{\ell}^{2}} \tRe\big(b'(x) \big )  \tIm \big (b(x) \big ) \partial_{\xi}\Phi_{0,1} \bigg ) \\
&& \qquad \quad + \frac{1}{6}i \left ( 120\xi^{3} \right )\left ( \frac{1}{6\langle \xi \rangle_{\ell}}  \tIm \big (b''(x) \big ) + \frac{1}{12\langle \xi \rangle_{\ell}^{2}} \tIm \big (b(x) \big ) \tIm \big (b'(x) \big ) + \frac{1}{6\langle \xi \rangle_{\ell}}\tRe\big(b'''(x)\big)\partial_{\xi}\Phi_{0,1} \right ) \\
&& \qquad \quad + \frac{1}{24} \left ( 360\xi^{2} \right )\left ( \frac{1}{6\langle \xi \rangle_{\ell}} \tIm \big (b'''(x) \big ) \right ) \Bigg ] \\
& \equiv & e^{\Phi_{0,1}} \Bigg [ \left ( \xi^{6} + b(x) \xi^{4} + c(x) \xi^{3} + d(x) \xi^{2} + e(x)\xi + f(x) \right ) \\
&& \qquad \quad+ \left ( \xi^{5} - \frac{\ell^{2}}{2}\xi^{3} +b(x)\xi^{3} +c(x)\xi^{2}  \right ) \frac{1}{6}\tRe\big(b(x)\big)\partial_{\xi}\Phi_{0,1} \\
&& \qquad \quad+ \left ( \xi^{4} - \frac{\ell^{2}}{2}\xi^{2} + b(x)\xi^{2}\right ) \frac{1}{6} \left ( c(x) + \frac{5}{2}ib'(x) \right )\partial_{\xi}\Phi_{0,1}\\
&& \quad \quad- \left ( \xi^{5} - \frac{\ell^{2}}{2}\xi^{3} + b(x)\xi^{3} \right )\frac{1}{12}i\tRe\big(b'(x)\big)\Big (\partial_{\xi}^{2}\Phi_{0,1} + \big(\partial_{\xi}\Phi_{0,1}\big)^{2}\Big ) \\
&& \qquad \quad+ \bigg ( \frac{\ell^{2}}{12}\tRe\big(b(x)\big)\xi^{3}\partial_{\xi}\Phi_{0, 1} + \left ( -\frac{35}{72} b''(x) + \frac{5}{12}ic'(x) + \frac{1}{6} d(x) - \frac{1}{24}b(x)^{2} - \frac{1}{36}\Big ( \tRe \big (b(x) \big ) \Big )^{2}\right )\xi^{3} \partial_{\xi}\Phi_{0,1} \\
&& \qquad \quad \qquad +\left ( -\frac{1}{12}i c'(x) +\frac{5}{24}b''(x)
+ \frac{1}{72}\Big ( \tRe\big(b(x)\big)\Big )^{2}\right )\xi^{4}\Big ( \partial_{\xi}^{2}\Phi_{0,1} + \big ( \partial_{\xi}\Phi_{0,1} \big )^{2}\Big ) \\
&& \qquad \quad \qquad - \frac{1}{36}\tRe\big(b''(x)\big )\xi^{5}\Big ( \partial_{\xi}^{3}\Phi_{0,1} + 3\partial_{\xi}^{2}\Phi_{0,1}\partial_{\xi}\Phi_{0,1} + \big ( \partial_{\xi}\Phi_{0,1} \big )^{3} \Big )\bigg ) \\
&& \qquad \quad - i\left (6\xi^{4}+4b(x)\xi^{2}+3c(x)\xi\right ) \frac{1}{6} \tIm \big (b(x) \big )- i \left (6\xi^{4}-3\ell^{2}\xi^{2} + 4b(x)\xi^{2} \right ) \frac{1}{6}\tRe\big(b'(x)\big ) \partial_{\xi}\Phi_{0,1} \\
&&  \qquad \quad+ i \bigg (\frac{1}{6}\xi^{2}\tIm\big(b(x)\big)\tRe\big(b(x)\big) - \xi^{3}\left ( c'(x) + \frac{5}{2}ib''(x) \right )\partial_{\xi}\Phi_{0,1} + \frac{1}{2}\xi^{4}i\tRe\big(b''(x)\big ) \Big (\partial_{\xi}^{2}\Phi_{0,1} + \big ( \partial_{\xi}\Phi_{0,1} \big )^{2} \Big ) \\
&& \qquad \quad \qquad - \frac{\ell^{2}}{2}\xi^{2}\tRe\big(b'(x)\big)\partial_{\xi}\Phi_{0,1}+ \frac{\xi}{6}\tIm\big(b(x)\big) \left ( c(x) + \frac{5}{2}ib'(x) \right ) \\
&& \qquad \quad \qquad + \frac{1}{2} i \tRe\big(b'(x) \big ) \Big ( -\frac{1}{6}\tIm\big(b(x)\big)\left ( -2\xi\right ) -\frac{1}{3}\xi^{2}\tIm\big(b(x)\big) \partial_{\xi}\Phi_{0,1}\Big ) \\
&&  \qquad \quad\qquad - \left ( -\frac{35}{12} b'''(x) + \frac{5}{2}ic''(x) + d'(x) - \frac{1}{2}b(x)b'(x) - \frac{1}{3}\tRe \big (b(x) \big ) \tRe\big(b'(x)\big ) \right ) \xi^{2}\partial_{\xi}\Phi_{0,1} \\
&& \qquad \quad \qquad - \left ( -\frac{1}{2}i c''(x) +\frac{5}{4}b'''(x) + \frac{1}{6}\tRe\big(b(x)\big)\tRe\big(b'(x)\big)\right ) \xi^{3} \Big (\partial_{\xi}^{2}\Phi_{0,1} + \big ( \partial_{\xi}\Phi_{0,1} \big )^{2} \Big ) \\
&& \qquad \quad \qquad +\frac{1}{6}\tRe\big(b'''(x)\big) \xi^{4}\Big ( \partial_{\xi}^{3}\Phi_{0,1} + 3\partial_{\xi}^{2}\Phi_{0,1}\partial_{\xi}\Phi_{0,1} + \big ( \partial_{\xi}\Phi_{0,1} \big )^{3} \Big ) -\frac{1}{6}\tRe\big(b(x)\big)\tIm\big(b(x)\big)\xi^{3}\partial_{\xi}\Phi_{0,1} \\
&& \qquad \quad \qquad- \frac{1}{6} \tIm \big (b(x) \big ) \left ( c(x)+ \frac{5}{2}ib'(x) \right ) \xi^{2} \partial_{\xi}\Phi_{0,1}+ \frac{1}{12} i \tRe\big(b'(x) \big )  \tIm \big (b(x) \big ) \xi^{3} \Big ( \partial_{\xi}^{2}\Phi_{0,1} + \big ( \partial_{\xi}\Phi_{0,1} \big )^{2} \Big )\bigg ) \\
&& \qquad \quad - \frac{1}{2} \left ( 5\xi^{3} + 2b(x)\xi \right )  \tIm \big (b'(x) \big ) \\
&& \qquad \quad -  \bigg ( \frac{5}{12}\Big ( \tIm \big (b(x) \big ) \Big )^{2} \xi^{2} + \frac{5}{2} \tRe\big(b''(x)\big)\xi^{3}\partial_{\xi}\Phi_{0,1} - \frac{5}{6} \tRe\big(b'(x)\big )\tIm\big(b(x)\big)\xi - \frac{5}{12} \tRe\big(b(x)\big)\tIm\big(b'(x)\big)\xi \\
&& \qquad \quad \qquad  + \frac{5}{2}\left ( c''(x) + \frac{5}{2}ib'''(x) \right )\xi^{2} \partial_{\xi}\Phi_{0,1} - \frac{5}{4} i \tRe\big(b'''(x) \big ) \xi^{3}\Big ( \partial_{\xi}^{2}\Phi_{0,1} + \big ( \partial_{\xi}\Phi_{0,1} \big )^{2} \Big )\\
&& \qquad \quad \qquad  + \frac{5}{12} \tRe\big(b(x)\big ) \tIm \big (b'(x) \big ) \xi^{2}\partial_{\xi}\Phi_{0,1} + \frac{5}{6} \tRe\big(b'(x) \big )  \tIm \big (b(x) \big )\xi^{2} \partial_{\xi}\Phi_{0,1} \bigg ) \\
&& \qquad \quad + \left ( \frac{10}{3}i \tIm \big (b''(x) \big )\xi^{2} + \frac{5}{3}i \tIm \big (b(x) \big ) \tIm \big (b'(x) \big )\xi + \frac{10}{3}i\tRe\big(b'''(x)\big)\xi^{2}\partial_{\xi}\Phi_{0,1} \right ) + \left ( \frac{5}{2} \tIm \big (b'''(x) \big )\xi \right ) \Bigg ] \\
& \equiv & e^{\Phi_{0,1}} \Bigg [ \bigg ( \xi^{6} + \tRe\big(b(x)\big) \xi^{4} + \left (c(x) -\frac{5}{2}\tIm\big(b'(x)\big) \right ) \xi^{3} \\
&& \qquad \quad+ \left (d(x)+\frac{1}{4}\Big (\tIm\big(b(x)\big)\Big )^{2}-\frac{1}{2}i\tRe\big(b(x)\big)\tIm\big(b(x)\big)+\frac{10}{3}i \tIm \big (b''(x) \big )\right ) \xi^{2} \\
&& \qquad \quad+ \bigg (e(x)-\frac{1}{3}i\tIm\big(b(x)\big)c(x)+ \frac{1}{4}i \tIm\big(b'(x)\big)\tIm\big(b(x)\big) + \frac{1}{4}\tRe\big(b'(x)\big)\tIm\big(b(x)\big) \\
&& \qquad \quad \qquad - \frac{7}{12}\tRe\big(b(x)\big)\tIm\big(b'(x)\big)+\frac{5}{2} \tIm \big (b'''(x) \big )\bigg )\xi  \bigg ) \\
&& \qquad \quad+ \bigg (\frac{1}{6}\tRe\big(b(x)\big)\xi^{5} + \left (\frac{1}{6}c(x) + \frac{5}{12}ib'(x) -i\tRe\big(b'(x)\big)\right )\xi^{4} \\
&& \qquad \quad \quad \; + \left ( -\frac{35}{72} b''(x) - \frac{7}{12}ic'(x) + \frac{1}{6} d(x) - \frac{1}{24}b(x)^{2} + \frac{5}{36}\Big ( \tRe \big (b(x) \big ) \Big )^{2} + \frac{5}{2}i\tIm\big(b''(x)\big)\right )\xi^{3} \\
&& \qquad \quad \quad \; +\bigg (\frac{1}{3}\tRe\big(b(x)\big)c(x)-\frac{\ell^{2}}{12}c(x) - \frac{5\ell^{2}}{24}ib'(x)  -i d'(x)  + \frac{7}{12}i\tRe \big (b(x) \big ) \tRe\big(b'(x)\big ) \\
&& \qquad \quad \qquad \quad - \frac{1}{2}i\tIm\big(b'(x)\big)\tIm\big(b(x)\big)+ \frac{10}{3}\tIm\big(b'''(x)\big)- \frac{4}{3} \tRe\big(b(x)\big ) \tIm \big (b'(x) \big )  \\
&& \qquad \quad \qquad \quad- \frac{1}{2} \tRe\big(b'(x) \big )  \tIm \big (b(x) \big )  \bigg )\xi^{2}\bigg )\partial_{\xi}\Phi_{0,1}  \\
&& \qquad \quad + \bigg (-\frac{1}{12}i\tRe\big(b'(x)\big)\xi^{5} +\left (- \frac{1}{2}\tRe\big(b''(x)\big)-\frac{1}{12}i c'(x) +\frac{5}{24}b''(x) + \frac{1}{72}\Big ( \tRe\big(b(x)\big)\Big )^{2}\right )\xi^{4} \\
&& \qquad \quad \quad+ \bigg (\frac{5}{4} \tIm\big(b'''(x) \big ) -\frac{1}{2} c''(x) - \frac{1}{4}i\tRe\big(b(x)\big)\tRe\big(b'(x)\big) + \frac{\ell^{2}}{24}i\tRe\big(b'(x)\big)\bigg ) \xi^{3} \bigg )\Big ( \partial_{\xi}^{2}\Phi_{0,1} + \big ( \partial_{\xi}\Phi_{0,1} \big )^{2} \Big ) \\
&& \qquad \quad +\left (\frac{1}{6}i\tRe\big(b'''(x)\big) \xi^{4}-\frac{1}{36}\tRe\big(b''(x)\big)\xi^{5} \right )\Big ( \partial_{\xi}^{3}\Phi_{0,1} + 3\partial_{\xi}^{2}\Phi_{0,1}\partial_{\xi}\Phi_{0,1} + \big ( \partial_{\xi}\Phi_{0,1} \big )^{3} \Big ) \Bigg ] \pmod{S^{0}}, \\
\IEEEeqnarraymulticol{3}{l}{\Phi_{1} \circ L_{(1)}} \\
& \equiv & \Phi_{1} \circ \bigg [ \xi^{6} + \tRe\big(b(x)\big) \xi^{4} + \left (c(x)-\frac{5}{2}\tIm\big(b'(x)\big)\right )\xi^{3} \\
&& \qquad \quad+ \left (d(x)+\frac{1}{4}\Big(\tIm\big(b(x)\big)\Big)^{2}- \frac{1}{2}i\tIm\big(b(x)\big)\tRe\big(b(x)\big)+\frac{10}{3}i \tIm \big (b''(x) \big )\right ) \xi^{2} \\
&& \qquad \quad +  \bigg (e(x)-\frac{1}{3}i\tIm\big(b(x)\big)c(x)+ \frac{1}{4}i \tIm\big(b'(x)\big)\tIm\big(b(x)\big) + \frac{1}{4}\tRe\big(b'(x)\big)\tIm\big(b(x)\big)\\
&& \qquad \quad \qquad- \frac{7}{12}\tRe\big(b(x)\big)\tIm\big(b'(x)\big)+\frac{5}{2} \tIm \big (b'''(x) \big )\bigg )\xi\bigg ]  \\
& \equiv & e^{\Phi_{0,1}} \Bigg [ \bigg ( \xi^{6} + \tRe\big(b(x)\big) \xi^{4} + \left (c(x)-\frac{5}{2}\tIm\big(b'(x)\big)\right )\xi^{3} \\
&& \qquad \quad + \left (d(x)+\frac{1}{4}\Big(\tIm\big(b(x)\big)\Big)^{2}- \frac{1}{2}i\tIm\big(b(x)\big)\tRe\big(b(x)\big)+\frac{10}{3}i \tIm \big (b''(x) \big )\right ) \xi^{2} \\
&&\qquad \quad +  \bigg (e(x)-\frac{1}{3}i\tIm\big(b(x)\big)c(x)+ \frac{1}{4}i \tIm\big(b'(x)\big)\tIm\big(b(x)\big) + \frac{1}{4}\tRe\big(b'(x)\big)\tIm\big(b(x)\big)\\
&& \qquad \quad \qquad- \frac{7}{12}\tRe\big(b(x)\big)\tIm\big(b'(x)\big)+\frac{5}{2} \tIm \big (b'''(x) \big )\bigg )\xi\bigg ) \\
&& \qquad \quad + \left ( \xi^{6} + \tRe\big(b(x)\big)\xi^{4} + \left ( c(x) - \frac{5}{2}\tIm\big(b'(x)\big) \right )\xi^{3} \right )\left ( \frac{1}{6\langle \xi \rangle_{\ell}}\tRe\big(b(x)\big)\partial_{\xi}\Phi_{0,1}  \right ) \\
&& \qquad \quad + \left ( \xi^{6} + \tRe\big(b(x)\big)\xi^{4} \right ) \left ( \frac{1}{6\langle \xi \rangle_{\ell}^{2}}\left ( c(x)+\frac{5}{2}ib'(x)\right )\partial_{\xi}\Phi_{0,1}-\frac{1}{12\langle \xi \rangle_{\ell}}i\tRe\big(b'(x)\big)\Big ( \partial_{\xi}^{2}\Phi_{0,1} + \big ( \partial_{\xi}\Phi_{0,1} \big )^{2} \Big )\right ) \\
&&\qquad \quad + \left ( \xi^{6} \right ) \bigg ( \frac{1}{6}\frac{\ell^{2}}{2\langle \xi \rangle_{\ell}^{3}}\partial_{\xi}\Phi_{0, 1}\tRe\big(b(x)\big) \\
&& \qquad \qquad \qquad \quad + \frac{1}{\langle \xi \rangle_{\ell}^{3}} \partial_{\xi}\Phi_{0,1}\left ( -\frac{35}{72} b''(x) + \frac{5}{12}ic'(x) + \frac{1}{6} d(x) - \frac{1}{24}b(x)^{2} - \frac{1}{36}\Big ( \tRe \big (b(x) \big ) \Big )^{2}\right )  \\
&& \qquad \qquad\qquad \quad + \frac{1}{\langle \xi\rangle_{\ell}^{2}}\Big ( \partial_{\xi}^{2}\Phi_{0,1} + \big ( \partial_{\xi}\Phi_{0,1} \big )^{2}\Big )\left ( -\frac{1}{12}i c'(x) +\frac{5}{24}b''(x)
+ \frac{1}{72}\Big ( \tRe\big(b(x)\big)\Big )^{2}\right ) \\
&&\qquad \qquad \qquad \quad - \frac{1}{\langle \xi \rangle_{\ell}}\Big ( \partial_{\xi}^{3}\Phi_{0,1} + 3\partial_{\xi}^{2}\Phi_{0,1}\partial_{\xi}\Phi_{0,1} + \big ( \partial_{\xi}\Phi_{0,1} \big )^{3} \Big )\frac{1}{36}\tRe\big(b''(x)\big ) \bigg ) \\
&& \qquad \quad- i \left ( \partial_{\xi}\Phi_{0,1} + \frac{1}{6\langle \xi \rangle_{\ell}} \tRe\big(b(x)\big)\Big ( \partial_{\xi}^{2}\Phi_{0,1}+\big (\partial_{\xi}\Phi_{0,1}\big )^{2}\Big )- \frac{\xi}{6\langle \xi \rangle_{\ell}^{3}} \tRe\big(b(x)\big)\partial_{\xi}\Phi_{0,1} \right ) \\
&& \qquad \quad \qquad\bigg ( \tRe\big(b'(x)\big)\xi^{4} + \left ( c'(x) - \frac{5}{2}\tIm\big(b''(x)\big)\right )\xi^{3} + \bigg ( d'(x) + \frac{1}{2}\tIm \big (b(x)\big)\tIm \big (b'(x)\big) \\
&& \qquad \quad \qquad\quad\;- \frac{1}{2}i\tRe\big(b'(x)\big)\tIm\big(b(x)\big)-\frac{1}{2}i\tRe\big(b(x)\big)\tIm\big(b'(x)\big)+\frac{10}{3}i\tIm\big(b'''(x)\big)\bigg )\xi^{2} \bigg ) \\
&& \qquad \quad- \frac{1}{2} \Big ( \partial_{\xi}^{2}\Phi_{0,1} + \big ( \partial_{\xi}\Phi_{0,1}\big )^{2} \Big ) \left ( \tRe\big(b''(x)\big)\xi^{4} + \left ( c''(x)-\frac{5}{2}\tIm\big(b'''(x)\big) \right )\xi^{3} \right ) \\
&& \qquad \quad+ \frac{1}{6}i \Big ( \partial_{\xi}^{3}\Phi_{0,1} + 3\partial_{\xi}^{2}\Phi_{0,1}\partial_{\xi}\Phi_{0,1} + \big ( \partial_{\xi}\Phi_{0,1} \big )^{3} \Big ) \left ( \tRe\big(b'''(x)\big)\xi^{4} \right )\Bigg ] \\
& \equiv & e^{\Phi_{0,1}} \Bigg [ \bigg ( \xi^{6} + \tRe\big(b(x)\big) \xi^{4} + \left (c(x)-\frac{5}{2}\tIm\big(b'(x)\big)\right )\xi^{3} \\
&& \qquad \quad+ \left (d(x)+\frac{1}{4}\Big(\tIm\big(b(x)\big)\Big)^{2}- \frac{1}{2}i\tIm\big(b(x)\big)\tRe\big(b(x)\big)+\frac{10}{3}i \tIm \big (b''(x) \big )\right ) \xi^{2} \\
&&\qquad \quad +  \bigg (e(x)-\frac{1}{3}i\tIm\big(b(x)\big)c(x)+ \frac{1}{4}i \tIm\big(b'(x)\big)\tIm\big(b(x)\big) + \frac{1}{4}\tRe\big(b'(x)\big)\tIm\big(b(x)\big)\\
&& \qquad \quad \qquad - \frac{7}{12}\tRe\big(b(x)\big)\tIm\big(b'(x)\big)+\frac{5}{2} \tIm \big (b'''(x) \big )\bigg )\xi\bigg ) \\
&& \qquad \quad + \partial_{\xi}\Phi_{0, 1} \bigg (  \frac{1}{6}\tRe\big(b(x)\big)\xi^{5}+\left ( \frac{1}{6}c(x) + \frac{5}{12}ib'(x)-i\tRe\big(b'(x)\big) \right ) \xi^{4} \\
&& \qquad \quad \qquad \qquad \;\;+\left ( \frac{5}{36} \Big (\tRe\big(b(x)\big)\Big )^{2} -\frac{35}{72} b''(x) - \frac{7}{12}ic'(x) + \frac{1}{6} d(x) - \frac{1}{24}b(x)^{2} + \frac{5}{2}i\tIm\big(b''(x)\big)\right )\xi^{3} \\
&& \qquad \quad \qquad \qquad \;\; + \bigg (\frac{1}{3}\tRe\big(b(x)\big) c(x) - \frac{4}{3}\tRe\big(b(x)\big)\tIm\big(b'(x)\big)-\frac{\ell^{2}}{12}c(x) - \frac{5}{24}\ell^{2}ib'(x)  \\
&& \qquad \quad \qquad \qquad \;\; \qquad+ \frac{7}{12}i\tRe\big(b(x)\big)\tRe\big(b'(x)\big)-id'(x)- \frac{1}{2}i\tIm \big (b(x)\big)\tIm \big (b'(x)\big) \\
&& \qquad \quad \qquad \qquad \;\;\qquad - \frac{1}{2}\tRe\big(b'(x)\big)\tIm\big(b(x)\big)+\frac{10}{3}\tIm\big(b'''(x)\big)\bigg )\xi^{2}   \bigg ) \\
&& \qquad \quad+ \Big ( \partial_{\xi}^{2}\Phi_{0,1} + \big ( \partial_{\xi}\Phi_{0,1}\big )^{2} \Big ) \bigg ( - \frac{1}{12}i\tRe\big(b'(x)\big)\xi^{5} +\bigg (-\frac{1}{2}\tRe\big(b''(x)\big) - \frac{1}{12}ic'(x) \\
&& \qquad \quad \qquad \qquad \qquad \qquad \qquad \quad\;+ \frac{5}{24}b''(x)+ \frac{1}{72}\Big ( \tRe\big(b(x)\big) \Big )^{2} \bigg )\xi^{4} + \bigg ( -\frac{1}{2}c''(x)+\frac{5}{4}\tIm\big(b'''(x)\big)\\
&& \qquad \quad \qquad \qquad \qquad \qquad \qquad \quad\;  - \frac{1}{4}i\tRe\big(b(x)\big)\tRe\big(b'(x)\big) + \frac{\ell^{2}}{24}i\tRe\big(b'(x)\big) \bigg )\xi^{3} \bigg ) \\
&& \qquad \quad+\Big ( \partial_{\xi}^{3}\Phi_{0,1} + 3\partial_{\xi}^{2}\Phi_{0,1}\partial_{\xi}\Phi_{0,1} + \big ( \partial_{\xi}\Phi_{0,1} \big )^{3} \Big ) \left ( \frac{1}{6}i\tRe\big(b'''(x)\big)\xi^{4}-\frac{1}{36}\tRe\big(b''(x)\big)\xi^{5} \right )\Bigg ]  \pmod{S^{0}}.
\end{IEEEeqnarray*}
Hence, $L \circ \Phi_{1} \equiv \Phi_{1} \circ L_{(1)}\pmod{S^{0}}$.

\subsubsection{$L_{(1)} \circ \Phi_{2} \equiv \Phi_{2} \circ L_{(2)}$}\label{subsubappB.2.2}

We check that $L_{(1)} \circ \Phi_{2} \equiv \Phi_{2} \circ L_{(2)} \pmod{S^{0}}$ for any $\ell \ge 1$, where $L_{(1)}, \Phi_{2}$, and $L_{(2)}$ are defined in \eqref{20241015eq17}, \eqref{20241015eq18}, and \eqref{20241015eq19} respectively. Note that by \eqref{20241015eq20} we have \[
\Phi_{0,2} \in S_{(\ell)}^{0}, \quad \partial_{x}\Phi_{0,2} - \frac{1}{6}\left ( \frac{1}{\langle \xi \rangle_{\ell}^{2}} + \frac{\ell^{2}}{2\langle \xi \rangle_{\ell}^{4}} \right ) \tIm \big (c_{(1)}(x) \big ) \in S_{(\ell)}^{-5}, \quad \Phi_{2} \in S_{(\ell)}^{0}.
\]
Then, one can calculate that
\begin{IEEEeqnarray*}{rCl}
\IEEEeqnarraymulticol{3}{l}{L_{(1)} \circ \Phi_{2} - \Phi_{2} \circ L_{(2)}} \\
& \equiv & \left (D_{x}^{6}+A_{(1)}+c_{(1)}(x)D_{x}^{3}+d_{(1)}(x)D_{x}^{2}+e_{(1)}(x)D_{x} \right ) \circ \Phi_{2} \\
&& - \Phi_{2} \circ \bigg ( D_{x}^{6} + A_{(1)} + \tRe\big(c_{(1)}(x)\big)D_{x}^{3} + \left ( d_{(1)}(x) - \frac{5}{2}\tIm\big(c_{(1)}'(x)\big) \right )D_{x}^{2} \\
&& \qquad \qquad + \left ( e_{(1)}(x) - \frac{1}{3}i\tRe\big(b(x)\big)\tIm\big(c_{(1)}(x)\big) + \frac{10}{3}i\tIm\big(c_{(1)}''(x)\big) \right )D_{x}\bigg ) \\
& \equiv & e^{\Phi_{0,2}}\bigg [ \left ( i\tIm\big(c_{(1)}(x)\big)\xi^{3} + \frac{5}{2}\tIm\big(c_{(1)}'(x)\big)\xi^{2} + \left (\frac{1}{3}i\tRe\big(b(x)\big)\tIm\big(c_{(1)}(x)\big)-\frac{10}{3}i\tIm\big(c_{(1)}''(x)\big)\right )\xi\right ) \\
&& \qquad \quad + \left (  i\tIm\big(c_{(1)}(x)\big)\xi^{3} \right ) \frac{1}{6}\left ( \frac{1}{\langle \xi\rangle_{\ell}}+\frac{\ell^{2}}{2\langle \xi \rangle_{\ell}^{3}}\right ) \partial_{\xi}\Phi_{0, 2}\tRe\big(b(x)\big) \\
&& \qquad \quad - i \left ( 6\xi^{5} + 4\tRe\big(b(x)\big)\xi^{3}  \right ) \left ( \partial_{x}\Phi_{0,2}+ \frac{1}{6}\left ( \frac{1}{\langle \xi\rangle_{\ell}}+\frac{\ell^{2}}{2\langle \xi \rangle_{\ell}^{3}}\right ) \partial_{\xi}\Phi_{0, 2}\tRe\big(b'(x)\big) \right )\\
&& \qquad \quad - i \left ( 6\xi^{5} \right ) \bigg (\frac{1}{6\langle \xi \rangle_{\ell}^{2}}\partial_{\xi}\Phi_{0,2}\left ( \tRe\big(c_{(1)}'(x)\big)+\frac{1}{2}i\tRe\big(b''(x)\big)\right ) - \frac{1}{12\langle \xi \rangle_{\ell}}\Big (\partial_{\xi}^{2}\Phi_{0,2} + \big(\partial_{\xi}\Phi_{0,2}\big)^{2}\Big )i\tRe\big(b''(x)\big) \\
&& \qquad \quad \qquad \qquad\;\; + \frac{1}{\langle \xi \rangle_{\ell}^{3}} \partial_{\xi}\Phi_{0,2}\left ( \frac{1}{6} d_{(1)}'(x) +\frac{5}{12}ic_{(1)}''(x) + \frac{25}{72}\tRe\big(b'''(x)\big) - \frac{5}{36}\tRe\big(b(x)\big)\tRe\big(b'(x)\big) \right )  \\
&& \qquad \quad \qquad \qquad\;\;+ \frac{1}{\langle \xi\rangle_{\ell}^{2}}\Big ( \partial_{\xi}^{2}\Phi_{0,2} + \big ( \partial_{\xi}\Phi_{0,2} \big )^{2}\Big )\left ( -\frac{1}{12}i \tRe\big(c_{(1)}''(x)\big) + \frac{1}{24}\tRe\big(b'''(x)\big) + \frac{1}{36}\tRe\big(b(x)\big) \tRe\big(b'(x)\big)\right ) \\
&& \qquad \quad \qquad \qquad\;\; - \frac{1}{\langle \xi \rangle_{\ell}}\Big ( \partial_{\xi}^{3}\Phi_{0,2} + 3\partial_{\xi}^{2}\Phi_{0,2}\partial_{\xi}\Phi_{0,2} + \big ( \partial_{\xi}\Phi_{0,2} \big )^{3} \Big )\frac{1}{36}\tRe\big(b'''(x)\big ) \\
&& \qquad \quad \qquad \qquad \;\; + \frac{1}{6\langle \xi \rangle_{\ell}}\partial_{x}\Phi_{0,2} \partial_{\xi}\Phi_{0, 2}\tRe\big(b(x)\big) + \frac{1}{6\langle \xi \rangle_{\ell}}\partial_{\xi}\partial_{x}\Phi_{0,2}\tRe\big(b(x)\big)\bigg ) \\
&& \qquad \quad -\frac{1}{2}\left ( 30\xi^{4} \right ) \bigg ( \partial_{x}^{2}\Phi_{0,2} + \frac{1}{6\langle \xi \rangle_{\ell}}\partial_{\xi}\Phi_{0,2}\tRe\big(b''(x)\big)+\frac{1}{6\langle \xi \rangle_{\ell}^{2}}\partial_{\xi}\Phi_{0,2}\left ( \tRe\big(c_{(1)}''(x)\big)+\frac{1}{2}i\tRe\big(b'''(x)\big)\right ) \\
&& \qquad \quad \qquad \qquad \quad \;\;- \frac{1}{12\langle \xi \rangle_{\ell}}\Big (\partial_{\xi}^{2}\Phi_{0,2} + \big(\partial_{\xi}\Phi_{0,2}\big)^{2}\Big )i\tRe\big(b'''(x)\big)\bigg ) \\
&& \qquad \quad + \frac{1}{6}i\left(120\xi^{3}\right ) \left ( \partial_{x}^{3}\Phi_{0,2} + \frac{1}{6\langle \xi \rangle_{\ell}}\partial_{\xi}\Phi_{0,2}\tRe\big(b'''(x)\big)\right ) \\
&& \qquad \quad + i \left ( \partial_{\xi}\Phi_{0,2}\right ) \left ( \tRe\big(b'(x)\big)\xi^{4} + \left ( \tRe\big(c_{(1)}'(x)\big)- 2i\tRe\big(b''(x)\big)\right )\xi^{3} + \left ( d_{(1)}'(x)-\frac{5}{2}\tIm\big(c_{(1)}''(x)\big) \right )\xi^{2} \right ) \\
&& \qquad \quad + i \left ( \frac{1}{6\langle \xi \rangle_{\ell}}\tRe\big(b(x)\big)\Big ( \partial_{\xi}^{2}\Phi_{0,2}+\big (\partial_{\xi}\Phi_{0,2}\big)^{2}\Big )-\frac{\xi}{6\langle \xi \rangle_{\ell}^{3}}\tRe\big(b(x)\big)\partial_{\xi}\Phi_{0,2}\right ) \left ( \tRe\big(b'(x)\big)\xi^{4} \right ) \\
&& \qquad \quad + \frac{1}{2} \Big ( \partial_{\xi}^{2}\Phi_{0,2}+\big (\partial_{\xi}\Phi_{0,2}\big)^{2}\Big )\left ( \tRe\big(b''(x)\big)\xi^{4} + \left ( \tRe\big(c_{(1)}''(x)\big)- 2i\tRe\big(b'''(x)\big)\right )\xi^{3} \right ) \\
&& \qquad \quad -\frac{1}{6}i \Big ( \partial_{\xi}^{3}\Phi_{0,2} + 3\partial_{\xi}^{2}\Phi_{0,2}\partial_{\xi}\Phi_{0,2}+\big(\partial_{\xi}\Phi_{0,2}\big)^{3}\Big )\left ( \tRe\big(b'''(x)\big)\xi^{4} \right )\bigg ] \\
& \equiv & e^{\Phi_{0,2}}\bigg [ i\tIm\big(c_{(1)}(x)\big)\xi^{3} + \frac{5}{2}\tIm\big(c_{(1)}'(x)\big)\xi^{2} + \left (\frac{1}{3}i\tRe\big(b(x)\big)\tIm\big(c_{(1)}(x)\big)-\frac{10}{3}i\tIm\big(c_{(1)}''(x)\big)\right )\xi \\
&& \qquad \quad + \frac{1}{6}  i\tRe\big(b(x)\big)\tIm\big(c_{(1)}(x)\big)\xi^{2}\partial_{\xi}\Phi_{0, 2} - i\xi^{3}\tIm\big(c_{(1)}(x)\big) - i\tRe\big(b'(x)\big)\xi^{4}\partial_{\xi}\Phi_{0,2} \\
&& \qquad \quad-\frac{2}{3}i\tRe\big(b(x)\big)\tIm\big(c_{(1)}(x)\big)\xi - \frac{2}{3}i\tRe\big(b(x)\big)\tRe\big(b'(x)\big)\xi^{2}\partial_{\xi}\Phi_{0,2} \\
&& \qquad \quad - i \bigg (\xi^{3}\partial_{\xi}\Phi_{0,2}\left ( \tRe\big(c_{(1)}'(x)\big)+\frac{1}{2}i\tRe\big(b''(x)\big)\right ) - \frac{1}{2}\xi^{4}\Big (\partial_{\xi}^{2}\Phi_{0,2} + \big(\partial_{\xi}\Phi_{0,2}\big)^{2}\Big )i\tRe\big(b''(x)\big) \\
&& \qquad \quad \qquad  + \xi^{2}\partial_{\xi}\Phi_{0,2}\left (  d_{(1)}'(x) +\frac{5}{2}ic_{(1)}''(x) + \frac{25}{12}\tRe\big(b'''(x)\big) - \frac{5}{6}\tRe\big(b(x)\big)\tRe\big(b'(x)\big) \right )  \\
&& \qquad \quad \qquad + \xi^{3}\Big ( \partial_{\xi}^{2}\Phi_{0,2} + \big ( \partial_{\xi}\Phi_{0,2} \big )^{2}\Big )\left ( -\frac{1}{2}i \tRe\big(c_{(1)}''(x)\big) + \frac{1}{4}\tRe\big(b'''(x)\big) + \frac{1}{6}\tRe\big(b(x)\big) \tRe\big(b'(x)\big)\right ) \\
&& \qquad \quad \qquad  - \xi^{4}\Big ( \partial_{\xi}^{3}\Phi_{0,2} + 3\partial_{\xi}^{2}\Phi_{0,2}\partial_{\xi}\Phi_{0,2} + \big ( \partial_{\xi}\Phi_{0,2} \big )^{3} \Big )\frac{1}{6}\tRe\big(b'''(x)\big ) \\
&& \qquad \quad \qquad + \frac{1}{6} \tRe\big(b(x)\big) \tIm\big(c_{(1)}(x)\big) \xi^{2}\partial_{\xi}\Phi_{0, 2} -\frac{1}{3}\tRe\big(b(x)\big)\tIm\big(c_{(1)}(x)\big)\xi\bigg ) \\
&& \qquad \quad - \bigg ( \frac{5}{2}\tIm\big(c_{(1)}'(x)\big)\xi^{2} + \frac{5}{2}\tRe\big(b''(x)\big)\xi^{3}\partial_{\xi}\Phi_{0,2}+\frac{5}{2}\left ( \tRe\big(c_{(1)}''(x)\big)+\frac{1}{2}i\tRe\big(b'''(x)\big)\right )\xi^{2}\partial_{\xi}\Phi_{0,2} \\
&& \qquad \quad \quad\;\; - \frac{5}{4}i\tRe\big(b'''(x)\big)\xi^{3}\Big (\partial_{\xi}^{2}\Phi_{0,2} + \big(\partial_{\xi}\Phi_{0,2}\big)^{2}\Big )\bigg ) \\
&& \qquad \quad + \left ( \frac{10}{3}i\tIm\big(c_{(1)}''(x)\big)\xi + \frac{10}{3}i\tRe\big(b'''(x)\big)\xi^{2}\partial_{\xi}\Phi_{0,2}\right ) \\
&& \qquad \quad + i \left ( \partial_{\xi}\Phi_{0,2}\right ) \left ( \tRe\big(b'(x)\big)\xi^{4} + \left ( \tRe\big(c_{(1)}'(x)\big)- 2i\tRe\big(b''(x)\big)\right )\xi^{3} + \left ( d_{(1)}'(x)-\frac{5}{2}\tIm\big(c_{(1)}''(x)\big) \right )\xi^{2} \right ) \\
&& \qquad \quad + i \left ( \frac{1}{6}\tRe\big(b(x)\big)\tRe\big(b'(x)\big)\xi^{3}\Big ( \partial_{\xi}^{2}\Phi_{0,2}+\big (\partial_{\xi}\Phi_{0,2}\big)^{2}\Big )-\frac{1}{6}\tRe\big(b(x)\big)\tRe\big(b'(x)\big)\xi^{2}\partial_{\xi}\Phi_{0,2}\right )  \\
&& \qquad \quad + \frac{1}{2} \Big ( \partial_{\xi}^{2}\Phi_{0,2}+\big (\partial_{\xi}\Phi_{0,2}\big)^{2}\Big )\left ( \tRe\big(b''(x)\big)\xi^{4} + \left ( \tRe\big(c_{(1)}''(x)\big)- 2i\tRe\big(b'''(x)\big)\right )\xi^{3} \right ) \\
&& \qquad \quad -\frac{1}{6}i \Big ( \partial_{\xi}^{3}\Phi_{0,2} + 3\partial_{\xi}^{2}\Phi_{0,2}\partial_{\xi}\Phi_{0,2}+\big(\partial_{\xi}\Phi_{0,2}\big)^{3}\Big )\left ( \tRe\big(b'''(x)\big)\xi^{4} \right )\bigg ]  \equiv  0 \pmod{S^{0}}.
\end{IEEEeqnarray*}
Hence the claim.

\subsubsection{$L_{(2)} \circ \Phi_{3} \equiv \Phi_{3} \circ L_{(3)}$}\label{subsubappB.2.3}

We check that $L_{(2)} \circ\Phi_{3} \equiv \Phi_{3} \circ L_{(3)} \pmod{S^{0}}$ for all $\ell \ge 1$, where $L_{(2)}, \Phi_{3},$ and $L_{(3)}$ are defined in \eqref{20241015eq21}, \eqref{20241015eq22}, and \eqref{20241015eq23} respectively. Note that by \eqref{20241015eq24}, we have that \[
\Phi_{0,3} \in S_{(\ell)}^{0}, \quad \partial_{x}\Phi_{0,3} - \frac{1}{6\langle \xi \rangle_{\ell}^{3}} \tIm \big (d_{(2)}(x) \big ) \in S_{(\ell)}^{-5}, \quad \Phi_{3} \in S_{(\ell)}^{0}.
\]
Then,
\begin{IEEEeqnarray*}{rCl}
\IEEEeqnarraymulticol{3}{l}{L_{(2)} \circ \Phi_{3} - \Phi_{3} \circ L_{(3)}} \\
& \equiv & \left ( D_{x}^{6}+A_{(2)}+d_{(2)}(x)D_{x}^{2}+e_{(2)}(x)D_{x}\right ) \circ \Phi_{3} \\
&& - \Phi_{3} \circ \left ( D_{x}^{6} + A_{(2)} + \tRe\big(d_{(2)}(x)\big)D_{x}^{2} + \left ( e_{(2)}(x) -\frac{5}{2}\tIm\big (d_{(2)}'(x)\big) \right )D_{x} \right ) \\
& \equiv & e^{\Phi_{0,3}} \bigg [ \left ( i\tIm\big(d_{(2)}(x)\big)\xi^{2} + \frac{5}{2}\tIm\big(d_{(2)}'(x)\big)\xi \right ) \\
&& \qquad \quad - i \left ( 6\xi^{5} + 4\tRe\big(b(x)\big)\xi^{3} \right ) \bigg ( \frac{1}{6}\left ( \frac{1}{\langle \xi\rangle_{\ell}}+\frac{\ell^{2}}{2\langle \xi \rangle_{\ell}^{3}}\right ) \partial_{\xi}\Phi_{0, 3}\tRe\big(b'(x)\big) \bigg ) \\
&& \qquad \quad - i\left ( 6\xi^{5} \right ) \bigg (\partial_{x}\Phi_{0,3}+\frac{1}{6\langle \xi \rangle_{\ell}^{2}}\partial_{\xi}\Phi_{0,3}\left ( \tRe\big(c_{(1)}'(x)\big)+\frac{1}{2}i\tRe\big(b''(x)\big)\right ) \\
&& \qquad \quad \qquad \qquad \;\; - \frac{1}{12\langle \xi \rangle_{\ell}}\Big (\partial_{\xi}^{2}\Phi_{0,3} + \big(\partial_{\xi}\Phi_{0,3}\big)^{2}\Big )i\tRe\big(b''(x)\big) \\
&& \qquad \quad \qquad \qquad \;\;+ \frac{1}{\langle \xi \rangle_{\ell}^{3}} \partial_{\xi}\Phi_{0,3}\left ( \frac{1}{6} \tRe\big (d_{(2)}'(x)\big ) +\frac{1}{6}i\tRe\big (c_{(1)}''(x)\big) + \frac{25}{72}\tRe\big(b'''(x)\big) - \frac{5}{36}\tRe\big(b(x)\big)\tRe\big(b'(x)\big) \right )  \\
&& \qquad \quad \qquad \qquad \;\; + \frac{1}{\langle \xi\rangle_{\ell}^{2}}\Big ( \partial_{\xi}^{2}\Phi_{0,3} + \big ( \partial_{\xi}\Phi_{0,3} \big )^{2}\Big )\left ( -\frac{1}{12}i \tRe\big(c_{(1)}''(x)\big) + \frac{1}{24}\tRe\big(b'''(x)\big) + \frac{1}{36}\tRe\big(b(x)\big)\tRe\big(b'(x)\big)\right ) \\
&& \qquad \quad \qquad \qquad \;\; - \frac{1}{\langle \xi \rangle_{\ell}}\Big ( \partial_{\xi}^{3}\Phi_{0,3} + 3\partial_{\xi}^{2}\Phi_{0,3}\partial_{\xi}\Phi_{0,3} + \big ( \partial_{\xi}\Phi_{0,3} \big )^{3} \Big )\frac{1}{36}\tRe\big(b'''(x)\big ) \bigg ) \\
&& \qquad \quad - \frac{1}{2}\left ( 30\xi^{4}\right )\bigg (\frac{1}{6\langle \xi \rangle_{\ell}} \partial_{\xi}\Phi_{0, 3}\tRe\big(b''(x)\big)+\partial_{x}^{2}\Phi_{0,3}+\frac{1}{6\langle \xi \rangle_{\ell}^{2}}\partial_{\xi}\Phi_{0,3}\left ( \tRe\big(c_{(1)}''(x)\big)+\frac{1}{2}i\tRe\big(b'''(x)\big)\right ) \\
&& \qquad \quad \qquad \qquad \quad \;\; - \frac{1}{12\langle \xi \rangle_{\ell}}\Big (\partial_{\xi}^{2}\Phi_{0,3} + \big(\partial_{\xi}\Phi_{0,3}\big)^{2}\Big )i\tRe\big(b'''(x)\big)\bigg ) \\
&& \qquad \quad + \frac{1}{6}i\left ( 120\xi^{3}\right ) \left ( \frac{1}{6\langle \xi \rangle_{\ell}}\partial_{\xi}\Phi_{0,3}\tRe\big(b'''(x)\big)\right) \\
&& \qquad \quad + i \left ( \partial_{\xi}\Phi_{0,3}\right ) \left ( \tRe\big(b'(x)\big)\xi^{4} + \left ( \tRe\big(c_{(1)}'(x)\big)- 2i\tRe\big(b''(x)\big)\right )\xi^{3} + \left ( \tRe\big(d_{(2)}'(x)\big) -\frac{3}{2}i\tRe\big(c_{(1)}''(x)\big)  \right )\xi^{2} \right ) \\
&& \qquad \quad + i \left ( \frac{1}{6\langle \xi \rangle_{\ell}}\tRe\big(b(x)\big)\Big ( \partial_{\xi}^{2}\Phi_{0,3}+\big (\partial_{\xi}\Phi_{0,3}\big)^{2}\Big )-\frac{\xi}{6\langle \xi \rangle_{\ell}^{3}}\tRe\big(b(x)\big)\partial_{\xi}\Phi_{0,3}\right ) \left ( \tRe\big(b'(x)\big)\xi^{4} \right ) \\
&& \qquad \quad +\frac{1}{2} \Big ( \partial_{\xi}^{2}\Phi_{0,3}+\big (\partial_{\xi}\Phi_{0,3}\big)^{2}\Big )\left ( \tRe\big(b''(x)\big)\xi^{4} + \left ( \tRe\big(c_{(1)}''(x)\big)- 2i\tRe\big(b'''(x)\big)\right )\xi^{3} \right ) \\
&& \qquad \quad - \frac{1}{6}i\Big ( \partial_{\xi}^{3}\Phi_{0,3} + 3\partial_{\xi}^{2}\Phi_{0,3}\partial_{\xi}\Phi_{0,3}+\big(\partial_{\xi}\Phi_{0,3}\big)^{3}\Big )\left ( \tRe\big(b'''(x)\big)\xi^{4} \right )  \bigg ] \\
& \equiv & e^{\Phi_{0,3}} \bigg [  i\tIm\big(d_{(2)}(x)\big)\xi^{2} + \frac{5}{2}\tIm\big(d_{(2)}'(x)\big)\xi - i \tRe\big(b'(x)\big)\xi^{4}\partial_{\xi}\Phi_{0,3} - \frac{2}{3}i\tRe\big(b(x)\big)\tRe\big(b'(x)\big)\xi^{2}\partial_{\xi}\Phi_{0,3} \\
&& \qquad \quad - i\tIm\big(d_{(2)}(x)\big)\xi^{2} - i\left ( \tRe\big(c_{(1)}'(x)\big)+\frac{1}{2}i\tRe\big(b''(x)\big)\right )\xi^{3}\partial_{\xi}\Phi_{0,3} - \frac{1}{2}\tRe\big(b''(x)\big)\xi^{4}\Big (\partial_{\xi}^{2}\Phi_{0,3} + \big(\partial_{\xi}\Phi_{0,3}\big)^{2}\Big )\\
&& \qquad \quad -i\xi^{2}  \partial_{\xi}\Phi_{0,3}\left (  \tRe\big (d_{(2)}'(x)\big ) +i\tRe\big (c_{(1)}''(x)\big) + \frac{25}{12}\tRe\big(b'''(x)\big) - \frac{5}{6}\tRe\big(b(x)\big)\tRe\big(b'(x)\big) \right )  \\
&& \qquad \quad  -i\xi^{3}\Big ( \partial_{\xi}^{2}\Phi_{0,3} + \big ( \partial_{\xi}\Phi_{0,3} \big )^{2}\Big )\left ( -\frac{1}{2}i \tRe\big(c_{(1)}''(x)\big) + \frac{1}{4}\tRe\big(b'''(x)\big) + \frac{1}{6}\tRe\big(b(x)\big)\tRe\big(b'(x)\big)\right ) \\
&& \qquad \quad  +i\xi^{4}\Big ( \partial_{\xi}^{3}\Phi_{0,3} + 3\partial_{\xi}^{2}\Phi_{0,3}\partial_{\xi}\Phi_{0,3} + \big ( \partial_{\xi}\Phi_{0,3} \big )^{3} \Big )\frac{1}{6}\tRe\big(b'''(x)\big )  \\
&& \qquad \quad - \bigg (\frac{5}{2}\xi^{3}\partial_{\xi}\Phi_{0, 3}\tRe\big(b''(x)\big)+\frac{5}{2}\tIm\big(d_{(2)}'(x)\big)\xi+\frac{5}{2}\xi^{2}\partial_{\xi}\Phi_{0,3}\left ( \tRe\big(c_{(1)}''(x)\big)+\frac{1}{2}i\tRe\big(b'''(x)\big)\right ) \\
&& \qquad \quad \quad \;\; -\frac{5}{4}\xi^{3}\Big (\partial_{\xi}^{2}\Phi_{0,3} + \big(\partial_{\xi}\Phi_{0,3}\big)^{2}\Big )i\tRe\big(b'''(x)\big)\bigg )  + \frac{10}{3}i \tRe\big(b'''(x)\big)\xi^{2}\partial_{\xi}\Phi_{0,3} \\
&& \qquad \quad + i \left ( \partial_{\xi}\Phi_{0,3}\right ) \left ( \tRe\big(b'(x)\big)\xi^{4} + \left ( \tRe\big(c_{(1)}'(x)\big)- 2i\tRe\big(b''(x)\big)\right )\xi^{3} + \left ( \tRe\big(d_{(2)}'(x)\big) -\frac{3}{2}i\tRe\big(c_{(1)}''(x)\big)  \right )\xi^{2} \right ) \\
&& \qquad \quad + i \left ( \frac{1}{6}\xi^{3}\tRe\big(b(x)\big)\tRe\big(b'(x)\big)\Big ( \partial_{\xi}^{2}\Phi_{0,3}+\big (\partial_{\xi}\Phi_{0,3}\big)^{2}\Big )-\frac{1}{6}\tRe\big(b(x)\big)\tRe\big(b'(x)\big)\xi^{2}\partial_{\xi}\Phi_{0,3}\right ) \\
&& \qquad \quad +\frac{1}{2} \Big ( \partial_{\xi}^{2}\Phi_{0,3}+\big (\partial_{\xi}\Phi_{0,3}\big)^{2}\Big )\left ( \tRe\big(b''(x)\big)\xi^{4} + \left ( \tRe\big(c_{(1)}''(x)\big)- 2i\tRe\big(b'''(x)\big)\right )\xi^{3} \right ) \\
&& \qquad \quad - \frac{1}{6}i\tRe\big(b'''(x)\big)\xi^{4}\Big ( \partial_{\xi}^{3}\Phi_{0,3} + 3\partial_{\xi}^{2}\Phi_{0,3}\partial_{\xi}\Phi_{0,3}+\big(\partial_{\xi}\Phi_{0,3}\big)^{3}\Big )\bigg ] \equiv 0 \pmod{S^{0}}.
\end{IEEEeqnarray*}
Hence the claim.

\subsubsection{$L_{(3)} \circ \Phi_{4} \equiv \Phi_{4} \circ L_{(4)}$}\label{subsubappB.2.4}

We check that $L_{(3)} \circ\Phi_{4} \equiv \Phi_{4} \circ L_{(4)} \pmod{S^{0}}$ for all $\ell \ge 1$, where $L_{(3)}, \Phi_{4},$ and $L_{(4)}$ are defined in \eqref{20241015eq25}, \eqref{20241015eq26}, and \eqref{20241015eq27} respectively. Note that by \eqref{20241015eq28}, we have that
\[
\Phi_{0, 4} \in S_{(\ell)}^{0}, \quad \partial_{x}\Phi_{0,4} - \frac{1}{6\langle \xi \rangle_{\ell}^{4}}\tIm\big(e_{(3)}(x)\big) \in S_{(\ell)}^{-5}, \quad \Phi_{4} \in S_{(\ell)}^{0}.
\]
Then, one can check the claim by calculating
\begin{IEEEeqnarray*}{rCl}
\IEEEeqnarraymulticol{3}{l}{L_{(3)} \circ \Phi_{4} - \Phi_{4} \circ L_{(4)}} \\
&\equiv &\left ( D_{x}^{6}+A_{(3)}+e_{(3)}(x)D_{x} \right ) \circ \Phi_{4} - \Phi_{4} \circ \left ( D_{x}^{6}+A_{(3)} + \tRe\big(e_{(3)}(x)\big)D_{x} \right ) \\
&\equiv & e^{\Phi_{0,4}} \bigg [ i\tIm\big(e_{(3)}(x)\big) \xi - i\left ( 6\xi^{5} + 4\tRe\big(b(x)\big)\xi^{3} \right )\left ( \frac{1}{6}\left ( \frac{1}{\langle \xi \rangle_{\ell}} + \frac{\ell^{2}}{2\langle \xi\rangle_{\ell}^{3}} \right ) \partial_{\xi}\Phi_{0,4}\tRe\big(b'(x)\big) \right )\\
&& \qquad \quad - i \left ( 6\xi^{5} \right ) \bigg ( \partial_{x}\Phi_{0,4} +\frac{1}{6\langle \xi \rangle_{\ell}^{2}}\partial_{\xi}\Phi_{0,4}\left ( \tRe\big(c_{(1)}'(x)\big)+\frac{1}{2}i\tRe\big(b''(x)\big)\right ) \\
&& \qquad \quad \qquad \qquad \;\;- \frac{1}{12\langle \xi \rangle_{\ell}}\Big (\partial_{\xi}^{2}\Phi_{0,4} + \big(\partial_{\xi}\Phi_{0,4}\big)^{2}\Big )i\tRe\big(b''(x)\big) \\
&& \qquad \quad \qquad \qquad \;\;+ \frac{1}{\langle \xi \rangle_{\ell}^{3}} \partial_{\xi}\Phi_{0,4}\left ( \frac{1}{6} \tRe\big (d_{(2)}'(x)\big ) +\frac{1}{6}i\tRe\big (c_{(1)}''(x)\big) + \frac{25}{72}\tRe\big(b'''(x)\big) - \frac{5}{36}\tRe\big(b(x)\big)\tRe\big(b'(x)\big) \right )  \\
&& \qquad \quad \qquad \qquad \;\;+ \frac{1}{\langle \xi\rangle_{\ell}^{2}}\Big ( \partial_{\xi}^{2}\Phi_{0,4} + \big ( \partial_{\xi}\Phi_{0,4} \big )^{2}\Big )\left ( -\frac{1}{12}i \tRe\big(c_{(1)}''(x)\big) + \frac{1}{24}\tRe\big(b'''(x)\big) + \frac{1}{36} \tRe\big(b(x)\big)\tRe\big(b'(x)\big)\right ) \\
&& \qquad \quad \qquad \qquad \;\;- \frac{1}{\langle \xi \rangle_{\ell}}\Big ( \partial_{\xi}^{3}\Phi_{0,4} + 3\partial_{\xi}^{2}\Phi_{0,4}\partial_{\xi}\Phi_{0,4} + \big ( \partial_{\xi}\Phi_{0,4} \big )^{3} \Big )\frac{1}{36}\tRe\big(b'''(x)\big )\bigg )\\
&&\qquad \quad  - \frac{1}{2} \left ( 30\xi^{4}\right ) \bigg ( \frac{1}{6\langle \xi \rangle_{\ell}}\partial_{\xi}\Phi_{0,4}\tRe\big(b''(x)\big) +\frac{1}{6\langle \xi \rangle_{\ell}^{2}}\partial_{\xi}\Phi_{0,4}\left ( \tRe\big(c_{(1)}''(x)\big)+\frac{1}{2}i\tRe\big(b'''(x)\big)\right )\\
&& \qquad \quad \qquad \qquad \quad \;\; - \frac{1}{12\langle \xi \rangle_{\ell}}\Big (\partial_{\xi}^{2}\Phi_{0,4} + \big(\partial_{\xi}\Phi_{0,4}\big)^{2}\Big )i\tRe\big(b'''(x)\big)\bigg ) \\
&& \qquad \quad + \frac{1}{6}i\left(120\xi^{3}\right ) \left ( \frac{1}{6\langle\xi\rangle_{\ell}}\partial_{\xi}\Phi_{0,4}\tRe\big(b'''(x)\big)\right ) \\
&& \qquad \quad + i \left ( \partial_{\xi}\Phi_{0,4}\right ) \left ( \tRe\big(b'(x)\big)\xi^{4} +\left ( \tRe\big(c_{(1)}'(x)\big )- 2i\tRe\big(b''(x)\big)\right )\xi^{3} +\left ( \tRe\big(d_{(2)}'(x)\big)- \frac{3}{2}i\tRe\big(c_{(1)}''(x)\big)\right )\xi^{2}\right ) \\
&& \qquad \quad + i \left ( \frac{1}{6\langle \xi \rangle_{\ell}}\tRe\big(b(x)\big)\Big ( \partial_{\xi}^{2}\Phi_{0,4}+\big (\partial_{\xi}\Phi_{0,4}\big)^{2}\Big )-\frac{\xi}{6\langle \xi \rangle_{\ell}^{3}}\tRe\big(b(x)\big)\partial_{\xi}\Phi_{0,4}\right ) \left ( \tRe\big(b'(x)\big)\xi^{4} \right ) \\
&& \qquad \quad + \frac{1}{2}\Big ( \partial_{\xi}^{2}\Phi_{0,4} + \big (\partial_{\xi}\Phi_{0,4}\big)^{2}\Big )\left ( \tRe\big(b''(x)\big)\xi^{4} +\left ( \tRe\big(c_{(1)}''(x)\big )- 2i\tRe\big(b'''(x)\big)\right )\xi^{3}\right ) \\
&& \qquad \quad - \frac{1}{6}i\Big ( \partial_{\xi}^{3}\Phi_{0,4} + 3\partial_{\xi}^{2}\Phi_{0,4}\partial_{\xi}\Phi_{0,4}+\big(\partial_{\xi}\Phi_{0,4}\big)^{3}\Big )\left ( \tRe\big(b'''(x)\big)\xi^{4} \right )  \bigg ] \\
& \equiv & e^{\Phi_{0,4}} \bigg [ i\tIm\big(e_{(3)}(x)\big) \xi -i\tRe\big(b'(x)\big)\xi^{4}\partial_{\xi}\Phi_{0,4} - \frac{2}{3}i\tRe\big(b(x)\big)\tRe\big(b'(x)\big)\xi^{2}\partial_{\xi}\Phi_{0,4} - i\tIm\big(e_{(3)}(x)\big)\xi \\
&& \qquad \quad -i\xi^{3}\partial_{\xi}\Phi_{0,4}\left ( \tRe\big(c_{(1)}'(x)\big)+\frac{1}{2}i\tRe\big(b''(x)\big)\right ) -\frac{1}{2}\xi^{4}\Big (\partial_{\xi}^{2}\Phi_{0,4} + \big(\partial_{\xi}\Phi_{0,4}\big)^{2}\Big )\tRe\big(b''(x)\big) \\
&& \qquad \quad -i\xi^{2}\partial_{\xi}\Phi_{0,4}\left ( \tRe\big (d_{(2)}'(x)\big ) +i\tRe\big (c_{(1)}''(x)\big) + \frac{25}{12}\tRe\big(b'''(x)\big) - \frac{5}{6}\tRe\big(b(x)\big)\tRe\big(b'(x)\big) \right )  \\
&& \qquad \quad -i\xi^{3}\Big ( \partial_{\xi}^{2}\Phi_{0,4} + \big ( \partial_{\xi}\Phi_{0,4} \big )^{2}\Big )\left ( -\frac{1}{2}i \tRe\big(c_{(1)}''(x)\big) + \frac{1}{4}\tRe\big(b'''(x)\big) + \frac{1}{6} \tRe\big(b(x)\big)\tRe\big(b'(x)\big)\right ) \\
&& \qquad \quad +i\xi^{4}\Big ( \partial_{\xi}^{3}\Phi_{0,4} + 3\partial_{\xi}^{2}\Phi_{0,4}\partial_{\xi}\Phi_{0,4} + \big ( \partial_{\xi}\Phi_{0,4} \big )^{3} \Big )\frac{1}{6}\tRe\big(b'''(x)\big )\\
&&\qquad \quad  - \bigg ( \frac{5}{2}\xi^{3}\partial_{\xi}\Phi_{0,4}\tRe\big(b''(x)\big) +\frac{5}{2}\xi^{2}\partial_{\xi}\Phi_{0,4}\left ( \tRe\big(c_{(1)}''(x)\big)+\frac{1}{2}i\tRe\big(b'''(x)\big)\right ) \\
&& \qquad \quad \qquad- \frac{5}{4}i\xi^{3}\Big (\partial_{\xi}^{2}\Phi_{0,4} + \big(\partial_{\xi}\Phi_{0,4}\big)^{2}\Big )\tRe\big(b'''(x)\big)\bigg ) \\
&& \qquad \quad + \frac{10}{3}i\xi^{2}\partial_{\xi}\Phi_{0,4}\tRe\big(b'''(x)\big) \\
&& \qquad \quad + i \left ( \partial_{\xi}\Phi_{0,4}\right ) \left ( \tRe\big(b'(x)\big)\xi^{4} +\left ( \tRe\big(c_{(1)}'(x)\big )- 2i\tRe\big(b''(x)\big)\right )\xi^{3} +\left ( \tRe\big(d_{(2)}'(x)\big)- \frac{3}{2}i\tRe\big(c_{(1)}''(x)\big)\right )\xi^{2}\right ) \\
&& \qquad \quad + i \left ( \frac{1}{6}\tRe\big(b(x)\big)\tRe\big(b'(x)\big)\xi^{3}\Big ( \partial_{\xi}^{2}\Phi_{0,4}+\big (\partial_{\xi}\Phi_{0,4}\big)^{2}\Big )-\frac{1}{6}\tRe\big(b(x)\big)\tRe\big(b'(x)\big)\xi^{2}\partial_{\xi}\Phi_{0,4}\right )  \\
&& \qquad \quad + \frac{1}{2}\Big ( \partial_{\xi}^{2}\Phi_{0,4} + \big (\partial_{\xi}\Phi_{0,4}\big)^{2}\Big )\left ( \tRe\big(b''(x)\big)\xi^{4} +\left ( \tRe\big(c_{(1)}''(x)\big )- 2i\tRe\big(b'''(x)\big)\right )\xi^{3}\right ) \\
&& \qquad \quad - \frac{1}{6}i\tRe\big(b'''(x)\big)\xi^{4}\Big ( \partial_{\xi}^{3}\Phi_{0,4} + 3\partial_{\xi}^{2}\Phi_{0,4}\partial_{\xi}\Phi_{0,4}+\big(\partial_{\xi}\Phi_{0,4}\big)^{3}\Big ) \bigg ] \equiv 0 \pmod{S^{0}}.
\end{IEEEeqnarray*}
Hence the claim.

\bibliographystyle{plain}
\bibliography{references}

\end{document}